\documentclass[10pt]{article} 
\usepackage[accepted]{tmlr}


\usepackage[pdfencoding=auto, psdextra, pagebackref=true]{hyperref}
\hypersetup{
    colorlinks,
    linkcolor={blue!50!black},
    citecolor={blue!50!black},
    urlcolor={blue!80!black}
}
\usepackage{url}

\usepackage{natbib}

\usepackage{appendix}
\usepackage{minitoc}

\usepackage{enumitem}

\usepackage{format}
\usepackage{notation}

\usepackage{mdframed}
\mdfdefinestyle{MyFrame}{
    linecolor=black,
    outerlinewidth=2pt,
    innertopmargin=4pt,
    innerbottommargin=4pt,
    innerrightmargin=4pt,
    innerleftmargin=4pt,
    leftmargin = 4pt,
    rightmargin = 4pt
    }

\usepackage{url}
\usepackage{graphicx}
\usepackage{subcaption}
\usepackage{amssymb}
\usepackage{amsmath}
\usepackage{float}
\usepackage{algorithm}
\usepackage{algpseudocode}
\usepackage{pdflscape}
\usepackage{multirow}
\usepackage{adjustbox}
\usepackage{dsfont}
\usepackage{tikz}
\usepackage{multicol}
\usepackage{comment}
\usepackage{amsthm}
\usepackage{array}

\usepackage[capitalize,noabbrev]{cleveref}

\newtheorem{theorem}{Theorem}

\newtheorem{remark}[theorem]{Remark}
\newtheorem{corollary}[theorem]{Corollary}
\newtheorem{definition}[theorem]{Definition}
\newtheorem{lemma}[theorem]{Lemma}
\newtheorem{proposition}[theorem]{Proposition}
\newtheorem{assumption}[theorem]{Assumption}

\usepackage{xcolor}


\title{Optimistic Online Learning in Symmetric Cone Games}


\author{\name Anas Barakat \email anas\_barakat@sutd.edu.sg\\ \addr Singapore University of Technology and Design \AND 
      \name Wayne Lin \email wayne\_lin@sutd.edu.sg\\ \addr Singapore University of Technology and Design \AND
      \name John Lazarsfeld  \email john\_lazarsfeld@sutd.edu.sg\\ \addr Singapore University of Technology and Design \AND
      \name Antonios Varvitsiotis \email antonios@sutd.edu.sg\\ \addr Singapore University of Technology and Design\\
      Centre for Quantum Technologies, National University of Singapore\\
      Archimedes/Athena Research Center 
      }



\begin{document}

\maketitle

\begin{abstract} 
We introduce symmetric cone games (SCGs), a broad class of multi-player games where each player's strategy lies in a generalized simplex (the trace-one slice of a symmetric cone). This framework unifies a wide spectrum of settings, including normal-form games (simplex strategies), quantum games (density matrices), and continuous games with ball-constrained strategies. It also captures several structured machine learning and optimization problems, such as distance metric learning and Fermat–Weber facility location, as two-player zero-sum SCGs. 
To compute approximate Nash equilibria in two-player zero-sum SCGs, we propose a single online learning algorithm: Optimistic Symmetric Cone Multiplicative Weights Updates (OSCMWU). Unlike prior methods tailored to specific geometries, OSCMWU provides closed-form updates over any symmetric cone and achieves a $\tilde{\mathcal{O}}(1/\epsilon)$ iteration complexity for computing $\epsilon$-saddle points. 
Our analysis builds on the Optimistic Follow-the-Regularized-Leader framework and hinges on a key technical contribution: We prove that the symmetric cone negative entropy is strongly convex with respect to the trace-one norm. This result extends known results for the simplex and spectraplex to all symmetric cones, and may be of independent interest.  
\end{abstract}

\section{Introduction}

Problems such as distance metric learning 
(e.g. \citet{weinberger-et-al09distance-met-learning}), 
adversarial training of quantum generative 
models \citep{dallaire-et-al18quantum-gans,chakrabarti-et-al19qwgans}, and facility location optimization \citep{brimberg95fermat-weber,xue-ye97min-sum-norms} may seem unrelated at first glance. Yet, all of them can be formulated as two-player zero-sum games where each player optimizes over a structured, convex strategy space. These strategy spaces take a diversity of forms (probability simplices, trace-one positive semidefinite (PSD) matrices, and Euclidean balls) reflecting different algebraic or geometric constraints.  

While this shared structure suggests the potential for unified solution methods, existing algorithms remain highly fragmented, often tailored to specific geometries in special structured problems. For instance, distance metric learning can be solved using the Frank-Wolfe algorithm \citep{ying-li12distance-metric-learning} or Nesterov's smoothing algorithm \citep{nesterov07smoothing}; quantum zero-sum games can be tackled using the Matrix Multiplicative Weights Update algorithm \citep{jain-watrous09zero-sum-quantum,jain-et-al22mmwu}; the celebrated Fermat-Weber facility location problem can be solved using interior point methods \citep{xue-ye97min-sum-norms}. This fragmented landscape of algorithms and analyses calls for the design of broadly applicable algorithms for equilibrium learning in structured games. 

In this work, we present a unified algorithmic framework for equilibrium learning in such problems by introducing a rich and expressive class of games we call Symmetric Cone Games (SCGs). In SCGs, each player's strategy set is a generalized simplex—the set of trace-one elements in a symmetric cone, i.e., the cone of squares of a Euclidean Jordan Algebra (EJA). This abstraction includes many familiar structures including the simplex (trace-one slice of the nonnegative orthant), the spectraplex (trace-one slice of the PSD cone), the second-order cone (trace-one $\ell^2$ ball), and
products of these cones. As a consequence, Symmetric Cone Games notably generalize several well-known classes: Normal-form games, where strategies lie in simplices; Quantum games, where strategies are density matrices (PSD, trace-one); continuous games with Euclidean geometry structure where strategies live in $\ell^2$ balls or second-order cones.

While we define SCGs in the general multi-player setting, we focus on the foundational case of two-player zero-sum SCGs, which unify and extend a variety of structured optimization problems in machine learning and game theory. Concretely, we study the following min-max problem: 
\begin{equation}
\label{eq:min-max-pb_intro}
\underset{x \in \Delta_{\mathcal{K}_1}}{\min} \, \underset{y \in \Delta_{\mathcal{K}_2}}{\max} f(x, y)\,,
\end{equation}
where $\Delta_{\mathcal{K}1}, \Delta_{\mathcal{K}_2}$ are generalized simplices over symmetric cones $\mathcal{K}_1, \mathcal{K}_2$ respectively, and $f$ is convex-concave. This general formulation accommodates rich structured domains such as PSD cones (spectral information), second-order cones (geometric norms), and their products. We illustrate this generality through concrete applications including distance metric learning (as a simplex–spectraplex game), facility location (as a second-order cone game). 

To solve such problems, we introduce a new online first-order learning algorithm—Optimistic Symmetric Cone Multiplicative Weights Update (OSCMWU)—for computing approximate saddle points in SCGs. This closed-form update method operates over any symmetric cone, does not require a Euclidean projection onto the symmetric cone and can be run online and independently by each player. OSCMWU recovers the Optimistic Multiplicative Weights Update (OMWU) algorithm for simplex games \citep{syrgkanis2015fast} and its matrix variants for spectraplex domains \citep{vasconcelos-et-al23} as particular cases. In two-player SCGs, the algorithm achieves $\mathcal{O}(1/\varepsilon)$ iteration complexity for reaching an $\varepsilon$-saddle point, improving over the prior $\mathcal{O}(1/\varepsilon^2)$ bound for vanilla SCMWU \citep{canyakmaz-et-al23}. Conceptually, OSCMWU is an instance of Optimistic Follow-the-Regularized-Leader (OFTRL) with a cone-specific entropy regularizer.
A key technical ingredient in our analysis is the following  strong convexity result: We prove that the symmetric cone negative entropy is strongly convex with respect to the trace-one norm. This generalizes known results for the simplex and spectraplex to arbitrary symmetric cones, leveraging the algebraic structure of Euclidean Jordan algebras \citep{faraut-koranyi94sym-cones}. We believe this result and its proof, based on a novel data processing inequality, may be of independent interest to the learning and optimization communities.

We illustrate the versatility of our framework by applying OSCMWU to two representative SCGs, showcasing its generality, simplicity, and robust convergence across different cone geometries.

In summary, our main contributions are as follows: 
\begin{itemize}[leftmargin=*]
\item \textbf{A unifying class of structured convex games.} We introduce \emph{Symmetric Cone Games} (SCGs), a general class that subsumes numerous existing settings under a single abstraction, including normal-form, quantum, and continuous games with Euclidean ball strategy spaces. 

\item \textbf{An online learning algorithm.} We propose \emph{OSCMWU}, an optimistic algorithm that operates over \emph{any} symmetric cone and can be run independently by each player in an SCG. The method yields closed-form updates via the exponential map and is well-suited for streaming, adversarial, or interactive learning scenarios in modern machine learning. In two-player SCGs, OSCMWU guarantees a~$\mathcal{O}(1/\varepsilon)$ iteration complexity to reach an $\varepsilon$-saddle point. This result improves over the $\mathcal{O}(1/\varepsilon^2)$ result established in \citet{canyakmaz-et-al23}. 

\item \textbf{A strong convexity result.} We establish that the symmetric cone negative entropy is strongly convex with respect to the trace-one norm. This generalizes classical results for the simplex and spectraplex to all symmetric cones and plays a key role in the regret analysis of our method.

\item \textbf{Applications.} We apply OSCMWU to solve SCGs arising in structured prediction (distance metric learning), spatial optimization problems (facility location), and hybrid cone optimization, demonstrating the versatility of our unified framework beyond classical game settings. 
\end{itemize}
Our theoretical results point toward a broader unification of structured online game-theoretic learning methods and their application to diverse domains. Proofs and extended related work are deferred to the appendix. 

\section{Preliminaries: Euclidean Jordan Algebras, Symmetric Cones, and Entropy} 
\label{sec:background}

In this section, we introduce our notations and define symmetric cones. We rely on the formalism of Euclidean Jordan algebras that we briefly review.  
We refer the reader to Appendix~\ref{apdx:background} for additional background and examples, and \citet{faraut-koranyi94sym-cones} for a more detailed exposition.

\noindent\textbf{(Euclidean) Jordan algebras.} 
Let $\mathcal{J}$ be a finite dimensional vector space equipped with a bilinear product~$\circ : \mathcal{J} \times \mathcal{J} \to \mathcal{J}$. The pair~$(\mathcal{J}, \circ)$ forms a Jordan algebra if for all $x, y \in \mathcal{J}, x \circ y = y \circ x$ and $x^2 \circ (x \circ y) = x \circ (x^2 \circ y)$ where $x^2 = x \circ x\,.$ We assume throughout this paper that the Jordan algebra $(\mathcal{J}, \circ)$  (simply denoted by~$\mathcal{J}$) has an identity element, i.e. there exists~$e \in \mathcal{J}$ s.t. $e \circ x = x \circ e = x$ for every $x \in \mathcal{J}\,.$ A Jordan algebra over $\mathbb{R}$ is said to be \textit{Euclidean} if it is equipped with an associative inner product~$(\cdot, \cdot)$, i.e. for all $x, y, z \in \mathcal{J}, (x \circ y, z) = (y, x \circ z)\,.$ Throughout this work, we will simply denote a Euclidean Jordan algebra (EJA in short)~$(\mathcal{J}, \circ, (\cdot, \cdot))$ by~$\mathcal{J}\,.$

\noindent\textbf{Symmetric cones.} 
A cone $\calK$ in an inner product space is said to be \emph{symmetric} if it is self-dual and homogeneous. 
Notably, \textit{any} symmetric cone is the cone of squares
$\{ x \circ x : x \in \mathcal{J} \}$
of some EJA 
$\calJ$ 
(see \citet{faraut-koranyi94sym-cones} Theorem III.3.1).
 Table~\ref{table-EJA-examples} in Appendix~\ref{appsec:SymCone_examples} summarizes some relevant examples.

\noindent\textbf{Jordan frames and spectral decomposition.} 
An element $p \in \mathcal{J}$ s.t. $p \circ p = p$ is called an idempotent in $\mathcal{J}.\,$ It is a \textit{primitive} idempotent if it is nonzero and cannot be written as a sum of two nonzero idempotents. A Jordan frame is a finite set~$\{q_1, \ldots, q_r \}$ of primitive idempotents in $\mathcal{J}$ s.t. $q_i \circ q_j = 0$ if $i \neq j$ and $\sum_{i=1}^r q_i = e$ where~$e$ is the unit element of~$\mathcal{J}$ and $r$ is the rank of the EJA, see Appendix~\ref{appsec:rank}. 
If~$\mathcal{J}$ is an EJA with rank $r$, then for every~$x \in \mathcal{J}$, there exists a Jordan frame~$\{q_1, \cdots, q_r\}$ and $\lambda_1, \cdots, \lambda_r \in \mathbb{R}$ s.t. $x = \sum_{i=1}^r \lambda_i q_i\,.$ The scalars~$(\lambda_i)_{1 \leq i \leq r}$ are called the eigenvalues of $x$, and are uniquely determined up to multiplicities. (See e.g. \citet{faraut-koranyi94sym-cones} Theorem III.1.2 for this so-called type-II spectral decomposition.) This decomposition generalizes the usual spectral decomposition of a Hermitian matrix into rank-one~projectors.

\noindent\textbf{Trace, inner product, dual norms, trace $p$-norms.} The trace map $\tr: \mathcal{J} \to \mathbb{R}$ is equal to the sum of eigenvalues, i.e., $\tr(x) = \sum_{i=1}^r \lambda_i$, for every $x \in \mathcal{J}$.  
The bilinear mapping $(x,y) \mapsto \tr(x \circ y)$ is positive definite, symmetric and associative (see e.g. \citet{faraut-koranyi94sym-cones}, Prop. III.4.1). We use the notation~$\langle x, y \rangle = \tr(x \circ y)$ for this canonical inner product. If $E$ is a finite-dimensional real space, we denote its dual by 
$E^*$  and we will write $\langle y, x \rangle$ for the pairing between $x \in E$ and $y \in E^{*}$. For any given norm~$\|\cdot\|$ defined on the space~$E$, we 
use the standard definition of the dual norm~$\|\cdot\|_{*}$ for every~$v \in E^*$ by~$\|v\|_{*} = \sup_{\|u\| \leq 1} \langle u, v \rangle$ where~$\langle \cdot, \cdot \rangle$ is the previously defined pairing. Finally, for any~$x \in \mathcal{J}$, its trace $p$-norm is defined by $\|x\|_{\tr, p} := \sum_{i=1}^r |\lambda_i(x)|^p$ when $0 < p < \infty$ and $\|x\|_{\tr, \infty} := \max_{1 \leq i \leq r} |\lambda_i(x)|$ when~$p = \infty,$ where $\{\lambda_i(x)\}_{1 \leq i \leq r}$ are the eigenvalues of~$x$\,. The trace-$p$ norms were first introduced and proven to be norms for all $p \in [1, \infty]$ by \citet{tao2014some}, and \citet{gowda2019holder} proved that the trace-$p$ and trace-$q$ norms are dual to each other for all $p, q \in [1, \infty]$ satisfying $\frac{1}{p} + \frac{1}{q} = 1$.  

\noindent\textbf{Symmetric cone negative entropy.} 
Given the EJA~$\mathcal{J}$ and its cone of squares~$\mathcal{K}$, the \textit{negative entropy} mapping~$\Phi_{\text{ent}}: \text{int}(\mathcal{K}) \to \mathbb{R}$ is defined for every $x \in \text{int}(\mathcal{K})$ by: 
\vspace{-2mm}
\begin{equation}\tag{SCNE}
\label{eq:neg-entropy}
\Phi_{\text{ent}}(x) = \tr(x \circ \ln x) = \sum_{i = 1}^r \lambda_i \ln \lambda_i\,, 
\vspace{-2mm}
\end{equation}
where $x = \sum_{i =1}^r \lambda_i q_i \in \text{int}(\mathcal{K})$ is the spectral decomposition of~$x$ and~$\ln: \text{int}(\mathcal{K}) \to \mathcal{J}$ is the Löwner extension of the scalar logarithm function defined by $\ln x = \sum_{i=1}^r \ln(\lambda_i) q_i$ for any~$x \in \text{int}(\mathcal{K})\,.$ Note that the last expression is well defined since $\lambda_i > 0$ for every~$1 \leq i \leq r$ as $x \in \text{int}(\mathcal{K})\,.$ Similarly, the exponential mapping $\exp: \calJ \to \text{int}(\mathcal{K})$ is defined by $\exp(x) = \sum_{i=1}^r \exp(\lambda_i) \, q_i\,.$

\section{Symmetric Cone Games} 

Symmetric cone games (SCGs) are multi-player games in which each player selects strategies from a generalized simplex, defined as the trace-one slice of a symmetric cone. This structure captures a broad class of structured learning and optimization problems—including classical and quantum games, metric learning, and convex programs over cones—under a unified theoretical framework. 
Formally, if $(\calJ, \circ)$ is an EJA of rank $r$ and $\calK$ its cone of squares, 
we define the generalized simplex:   
\begin{equation}
\Delta_\mathcal{K} := \{ x \in \calK : \tr(x) = 1\}\,, 
\end{equation}
i.e. the trace-one slice of $\calK$. Note that~$\Delta_{\mathcal{K}}$ is a convex compact set. 
Table~\ref{table:gen-simplices} illustrates how SCGs subsume well-known domains including normal-form games, quantum games, and games with Euclidean geometry.

\begin{table*}[ht]
  \caption{Examples of Generalized simplices}
  \medskip
  \label{table-generalized-simplices}
  \centering
  \begin{threeparttable}
  \begin{tabular}{lcl}
    \toprule
     Symmetric Cone~$\mathcal{K}$  & &Generalized Simplex $\DK$\\
    \midrule
    Nonnegative orthant $\mathbb{R}_{+}^n$ & & Simplex $\;\; \;$ $\simp{n} = \{x \in \mathbb{R}_+^n : \sum_{i=1}^n x_i = 1\}$\\[0.4cm]
    Real PSD symmetric matrices $\mathbb{S}_{+}^n$ & & Spectraplex $\; \;$ $\Delta_{\mathbb{S}_{+}^n} = \{X \in \mathbb{S}_{+}^n: \text{Tr}(X) = 1 \} $ \\[0.4cm]
    PSD  Hermitian matrices  $\mathbb{H}_{+}^n$ & &Spectraplex\tnote{$1$}  $\; \;$ $\Delta_{\mathbb{H}_{+}^n} = \{X \in \mathbb{H}_{+}^n: \text{Tr}(X) = 1 \} $ \\[0.4cm]
    Second-order cone $\mathbb{L}_+^n$ & & Ball\tnote{$2$} $\qquad$ $\Delta_{\mathbb{L}_+^n} = \{(\frac{1}{2},x) \in \mathbb{R}^{n} : \|x\|_2 \leq \frac{1}{2}\}$\tnote{$*$}\\
    \bottomrule
  \end{tabular}
  \begin{tablenotes}\footnotesize
  \item[$1$] This spectraplex is the set of density matrices and is commonly used in quantum computation and in quantum games, see e.g. \citet{gutoski-watrous07quantum-games,jain-watrous09zero-sum-quantum}.
  $\;$
  \tnote{$2$} We call it a ball here with a slight abuse since it is the set of $(\frac{1}{2},x)$ and not only~$x$.
  $\;$
  \end{tablenotes}
  \end{threeparttable}
  \label{table:gen-simplices}
\end{table*}

We consider a game setting with a finite number of players~$\mathcal{N} := \{1, \cdots, N\}\,.$ Each player~$i \in \mathcal{N}$ selects a strategy~$x_i$ from a generalized simplex~$\Delta_{\mathcal{K}_i}$ associated to a symmetric cone~$\mathcal{K}_i$ (which is a closed convex subset of a finite-dimensional EJA~$\mathcal{J}_i$). We denote the space of all joint strategies by~$\mathcal{X} := \prod_{i \in \mathcal{N}} \Delta_{\mathcal{K}_i} \subseteq \mathcal{J} := \prod_{i \in \mathcal{N}} \mathcal{J}_i$. We recall the standard notation~$x = (x_i, x_{-i})$ for any joint strategy~$x \in \mathcal{X}\,.$ Each player~$i \in \mathcal{N}$ has a payoff (or utility, reward) function~$u_i : \mathcal{X} \to \mathbb{R}$ which will be assumed to be concave and differentiable\footnote{For differentiability, the utility functions can be seen to be defined over the EJA~$\cal{J}$ which is an open set.} w.r.t. its $i$th variable. Under this assumption, we introduce the payoff vector as follows: $\forall x \in \mathcal{X}, \,m(x) = (m_i(x))_{i \in \mathcal{N}}\,, m_i(x) = \nabla_{x_i} u_i(x_i, x_{-i}), \forall i \in \mathcal{N}\,.$

At each time step $t$, each player~$i \in \mathcal{N}$ observes their payoff vector~$m_i^t = m_i(x^t)$ where $x^t \in \mathcal{X}$ is the joint strategy of the players. Then player~$i$ receives the payoff~$u_i(x^t)$ and computes their next strategy~$x_i^{t+1} \in \Delta_{\mathcal{K}_i}$.  
 
The playerwise regret associated to a sequence~$(x_i^t)$ of player~$i$'s strategies after $T$ steps is given by: 
\begin{equation}
\label{eq:regret-player-i}
r_i(T) := \sup_{x \in \Delta_{\mathcal{K}_i}} \sum_{t=1}^T u_i(x, x_{-i}^t) - u_i(x_i^t,x_{-i}^t)\,, \quad \forall i \in \mathcal{N}\,.
\end{equation}
We assume that each player’s payoff vector is Lipschitz-continuous under the trace norm. 
\begin{assumption}
\label{as:payoff-vector} 
$\forall x, y \in \mathcal{X}, \forall i \in \mathcal{N}, \exists L_i \geq 0, 
\|m_i(x) - m_i(y)\|_{\tr, \infty} \leq L_i \|x - y\|_{\tr, 1}\,.$  
\end{assumption}
This smoothness assumption on the payoff vectors is common in the literature, see e.g. \citet{farina-et-al22,mertikopoulos-hsieh-cevher24} (Assumption 1). Notably, we observe that if the game is multilinear and the absolute value of the utility $u_i(x)$ is bounded by $L_i$, then Assumption \ref{as:payoff-vector} holds (see Proposition \ref{prop:multilinear_lipschitz} in Appendix~\ref{sec:multilinear}). 
In the next section, we discuss further examples of SCGs where Assumption~\ref{as:payoff-vector} is satisfied.    

\subsection{Examples of SCGs}

We now discuss examples that illustrate how SCGs instantiate key classes of games.

\noindent\textbf{Example 1} (Finite normal-form games). Each player $i \in \mathcal{N}$ selects an action from a finite set~$\mathcal{A}_i$ and their payoff function~$u_i : \mathcal{A} \to \R$ is defined over the product space~$\mathcal{A} := \prod_{i \in \mathcal{N}} \mathcal{A}_i\,.$\footnote{We use the same notation $u_i$ as in the previous section with a slight abuse of notation, to be consistent with the standard payoff notation for normal-form games in game theory.} Each player may choose actions randomly according to a probability distribution~$x_i = (x_{i, a_i})_{a_i \in \mathcal{A}_i} \in \Delta(\mathcal{A}_i)$ where~$\Delta(\mathcal{A}_i)$ is the simplex. 
The payoff functions are extended as multilinear functions~$u_i : \prod_{i \in \mathcal{N}} \Delta(\mathcal{A}_i) \to \mathbb{R}$ defined by $u_i(x) = \sum_{a \in \mathcal{A}} x_a u_i(a)$ where $x_a = \prod_{i \in \mathcal{N}} x_{i,a_i}$ is the probability of the joint action~$a = (a_1, \cdots, a_n)\,.$ Note that in this setting $m_i(x) = \nabla_{x_i} u_i(x) = (u_i(a_i, x_{-i}))_{a_i \in \mathcal{A}_i}$ where $u_i(a_i,x_{-i}) := \sum_{a_{-i} \in \mathcal{A}_{-i}} x_{a_{-i}} u_i(a_i, a_{-i})$ and $\mathcal{A}_{-i} := \prod_{j \in \mathcal{N}, j \neq i} \mathcal{A}_j\,.$
Assumption~\ref{as:payoff-vector} is satisfied for bounded payoff functions (see e.g. the proof of Theorem~4 in \citet{syrgkanis2015fast}).  

\noindent\textbf{Example 2} (Convex games with ball strategy sets). 
Consider the second-order (Lorentz) cone $\mathbb{L}_{+}^n = \{ x = (x_1, \bar{x}) \in \mathbb{R} \times \mathbb{R}^{n-1}: \|\bar{x}\|_2 \leq x_1 \}$. The associated generalized simplex (see Table~\ref{table-generalized-simplices}) is the affine slice $\Delta_{\mathbb{L}_+^n} = \{(\frac{1}{2},\bar{x}) \in \mathbb{R}^{n} : \|\bar{x}\|_2 \leq \frac{1}{2}\}$  which is (via projection onto $\bar{x}$) affinely isomorphic to the Euclidean $\ell_2$ ball $\{\bar{x} \in \mathbb{R}^{n-1} : \|\bar{x}\|_2 \leq \frac{1}{2}\}$. A convex game with $\ell_2$ ball constraints is then specified by strategy sets $\mathcal{X}_i = \Delta_{\mathbb{L}_+^n}$ and payoff functions $u_i:\mathcal{X} := \prod_{j=1}^N \mathcal{X}_j \to \mathbb{R}$ that are concave in $x_i$ for each fixed $x_{-i}$ and continuously differentiable on $\mathcal{X}$ (see, e.g., \citet{rosen65} for concave games). 
We provide two concrete examples of games with Euclidean $\ell_2$ ball strategy sets: (a) Fixed-budget vote allocation in social choice, where an agent allocates a vote vector $x_i\in\mathbb{R}^{d_i}$ subject to a quadratic budget constraint $\|x_i\|_2^2 \le B_i$ (see, e.g., \citet{georgescu-et-al24fixed-budget-QV}); (b) Beamforming in wireless communications, where each transmitter chooses a beamforming vector $x_i$ under a power constraint $\|x_i\|_2^2 \leq P_i$ (see, e.g., \citet[section III]{larsson-jorswieck08}).
More generally, Euclidean balls are canonical compact convex strategy sets, a standard assumption in convex game theory; 
see, e.g., \citet{facchinei-kanzow10gne}.

\noindent\textbf{Example 3} (PSD matrix games). In this case, each player controls a PSD matrix variable (e.g. a signal covariance matrix) and each strategy space is the set of trace-one PSD matrices ($\Delta_{\mathbb{S}_+^n}$). These matrix games find applications in wireless communication networks for the competitive maximization of mutual information in interfering networks \citep{arslan-et-al07mimo,scutari-et-al08mimo-game-theory,mertikopoulos-moutakas15mimo,majlesinasab-et-al19semidefinite}. 

\noindent\textbf{Example 4} (Quantum games). The strategies of the players are quantum states represented by density matrices, i.e. elements of the spectraplex~$\Delta_{\mathbb{H}_+^n}$,
and the utility is given by the expected value of a measurement on the joint state, i.e., the inner product of a Hermitian observable with an element in the tensor product space. If the players are playing separately, their joint state is a product state, and the utility function is multilinear in each player's individual strategy space. We refer the reader to \citet{gutoski-watrous07quantum-games,jain-watrous09zero-sum-quantum,vasconcelos-et-al23,lin-et-al24noregret-quantum}.

\subsection{Representative Applications of SCGs}
\label{subsec:appli-scgs}

We now illustrate how diverse problems across machine learning and optimization fit naturally within our framework of SCGs. Each of the following examples is a convex-concave min-max game over generalized simplices derived from symmetric cones.

\noindent\textbf{Application 1: Distance Metric Learning via Spectraplex Games.} 
Distance metric learning aims to learn a Mahalanobis distance that pulls similar examples closer while pushing dissimilar ones apart. Consider a dataset $\{x_i\}_{1 \leq i \leq N}$ where $x_i \in \R^d$. For any pair~$(i,j)$, we define the matrix $X_{i,j} := (x_i - x_j) (x_i - x_j)^T \in \R^{d \times d}$. The Mahalanobis distance induced by any PSD matrix~$M \in \mathbb{S}_+^d$ is defined by $d_M^2(x_i,x_j) := (x_i - x_j)^T M (x_i - x_j)= \langle X_{i,j}, M \rangle$ for any pair~$(x_i,x_j)\,.$ Denote by $\mathcal{S}$ and~$\mathcal{D}$ the index sets of similar and dissimilar pairs respectively, i.e. $(i,j) \in \mathcal{S}$ if $(x_i, x_j)$ is a pair of similar data points. 
\citet{ying-li12distance-metric-learning} propose to maximize the minimal squared distances between dissimilar pairs while controlling the sum of squared distances between similar pairs:
\begin{equation}
\label{eq:distance-metric-learning-pb}
\begin{array}{cl}
\underset{M \in \mathbb{S}_{+}^d}{\max} & \underset{\tau \in \mathcal{D}}{\min}\left\langle X_\tau, M\right\rangle \\
\text { s.t. } & \left\langle X_{\mathcal{S}}, M\right\rangle \leq 1\,,
\end{array}
\end{equation}
where $X_\mathcal{S} := \sum_{(i,j) \in \mathcal{S}} X_{i,j} \in \mathbb{S}_+^d\,.$ 
Using duality, this problem is shown in \citet{ying-li12distance-metric-learning} (Theorem 1) to be equivalent to the convex–concave saddle-point problem:
\begin{equation}
\label{eq:pb-dist-metric-l}
\min _{x \in \simp{D}} \max _{Y \in \Dpsd{d}}\left\langle\sum_{\tau \in \mathcal{D}} x_\tau \tilde{X}_\tau, Y\right\rangle\,, \tilde{X}_\tau := X_S^{-1 / 2} X_\tau X_S^{-1 / 2}\,,
\end{equation}
the matrix~$X_\mathcal{S}$ is supposed to be invertible, $\simp{D}=\left\{u \in \mathbb{R}^D: u_\tau \geq 0, \quad \sum_{\tau \in \mathcal{D}} u_\tau=1\right\}, D := |\mathcal{D}|$ is the number of pairs. 
This is a SCG over a simplex and a spectraplex. 
More broadly, in simplex-spectraplex games player~1 plays a probability simplex vector $x$ and player~2 plays a real symmetric, trace-1, PSD matrix $Y$, giving rise to the (bi-affine) min-max game: 
\begin{equation}
\label{eq:simplex-spectraplex-game}
    \min_{x \in \simp{m}}
    \max_{Y \in \Dpsd{d}} f(x,Y) := \inp{Y}{\A(x)} + \inp{b}{x} + \inp{C}{Y},
\end{equation}
where $\A: \R^m \to \sym{d}$ is the linear map given by $\A(x) = \sum_{i=1}^m x_i A_i$ for some $A_i \in \sym{d}$. 
Problem~\eqref{eq:simplex-spectraplex-game} captures a wide range of problems including some constant trace semi-definite programming problems and maximum eigenvalue minimization problems.

\noindent\textbf{Application 2: Facility Location Optimization via Second-Order Cone Games.} 
The sum-of-norms problem, also known as the Fermat–Weber location problem, arises in transportation, logistics, and communications (see e.g. \citet{brimberg95fermat-weber,xue-ye97min-sum-norms} and the references therein). It seeks to minimize the sum of Euclidean distances (or residual norms) to given targets. Its general form is:
\begin{equation}
\label{eq:sum-of-norms-pb}
\underset{x \in C}{\min} \left\{ g(x) := \sum_{i=1}^p \|A_i x - b_i\|_2 \right\}\,,
\end{equation}
where~$C := \{ x \in \mathbb{R}^d : \|x\|_2 \leq R \}, \|\cdot\|_2$ is the standard Euclidean norm of~$\mathbb{R}^d$ and for all $i \in \{ 1, \cdots, p\}, A_i \in \mathbb{R}^{m \times d}, b_i \in \mathbb{R}^m\,.$\\

We now reformulate problem~\eqref{eq:sum-of-norms-pb} as a min-max problem involving second-order cone decision variables. We introduce a few useful notations. Define 
$\bar{A}_i := \left(0, A_i^T\right)^T \in \mathbb{R}^{(m+1) \times d}$ 
and $\bar{b}_i :=  \left( 0, b_i^T \right)^T \in \mathbb{R}^{m+1}.$ 
Now, recall that for any~$(s,x) \in \mathbb{L}^{d+1}$, the two eigenvalues of $x$ are  $s \pm \|x\|_2$ and the largest eigenvalue is therefore~$s + \|x\|_2\,. $ Now, using this fact together with the variational characterization of the maximal eigenvalue\footnote{In other words, $\lambda_{\max}$ is the support function of the generalized simplex, here: $\lambda_{\max}(x) = \max_{y \in \Delta_{\mathbb{L}_+^d}} \langle x, y \rangle\,.$}, we can write, by further introducing  $\bar{A} := \left( \bar{A}_1^T, \cdots, \bar{A}_p^T \right)^T$ and $\bar{b} :=  \left(\bar{b}_1^T, \cdots, \bar{b}_p^T\right)^T \in \mathcal{J}^p,$ the objective function as 
\begin{align}
    \label{eq:fx-rewriting}
g(x) = \sum_{i=1}^p \lambda_{\max}(\bar{A}_i x - \bar{b}_i)
           = \underset{y \in \Delta_{\mathcal{K}}^p}{\max} \langle y , \bar{A} x - \bar{b} \rangle_{\mathcal{J}^p}\,,
\end{align}
where $\calJ = \mathbb{L}^{d+1}$ and $\mathcal{J}^p$ is the product EJA. 
We now observe that $x \in C$ is equivalent to $(1/2, x/(2R)) \in \Delta_{\mathbb{L}_+^d} = \{ (1/2, x) \in \R^{d+1}: \|x\| \leq 1/2 \}\,.$\footnote{Recall here that~$\Delta_{\mathbb{L}_+^d}$ is the trace-one slice of the second-order cone~$\mathbb{L}_+^d$ as $\tr((t,x)) = 2t$ for any $(t,x) \in \mathbb{L}_+^d\,.$} Combining this with~\eqref{eq:fx-rewriting} and using the change of variable~$\tilde{x} = x/(2R)$, we obtain that problem~\eqref{eq:sum-of-norms-pb} is equivalent to: 
\begin{equation}
\label{eq:Fermat-Weber-minmax}
\underset{\bar{x} = (1/2, \tilde{x}) \in \Delta_{\mathbb{L}_+^{d+1}}}{\min} \quad \underset{y \in \prod_{i=1}^p \Delta_{\mathbb{L}_+^{d+1}}}{\max}  f (\bar{x}, y)\,,\\
\end{equation}
where $f (\bar{x}, y) := \langle \tilde{A} \,\bar{x}, y \rangle - \langle \bar{b}, y \rangle\,, \tilde{A} := 2 R (0 \quad \bar{A})$\,. 
This min-max formulation fits directly within the SCG framework, where both player strategy sets are generalized simplices derived from second-order cones. 

\section{Optimistic Symmetric Cone Multiplicative Weights Updates}
\label{sec:oscmwu}

In this section, we present our Optimistic Symmetric Cone Multiplicative Weights Update (OSCMWU) algorithm. The iterates of this algorithm evolve in a generalized simplex. We aim for a single algorithm that applies uniformly across all symmetric cone domains (e.g., simplices, spectraplexes, $l^2$-balls, etc.), avoiding projections and offering closed-form updates via the exponential map. To our knowledge, no such general-purpose algorithm has been proposed before. First we describe the online convex optimization (OCO) setting to set the stage (see e.g. \citet{shalev-shwartz12online-learning}).

\noindent\textbf{Online symmetric cone optimization.} At each time step~$t$, a decision maker chooses an element of the generalized simplex~$\DK$, i.e. a trace-one element~$x^t$ of a symmetric cone~$\mathcal{K}$ and receives a payoff~$f^t(x^t)$ where $f^t: \Delta_{\mathcal{K}} \to \mathbb{R}$ is a concave differentiable gain function.\footnote{We adopt here the (concave) utility maximization convention like in games rather than the  (convex) loss one in OCO.} The decision maker then receives as feedback the vector $m^t := \nabla f^t(x^t)\,.$ Note that $m^t \in \calJ^* = \calJ$, where $\calJ$ is $\calK$'s associated EJA. 
This setting has been recently considered in \citet{canyakmaz-et-al23} in the particular case of linear payoffs, i.e where the payoff function~$f^t$ is linear:  for all~$t\geq 0, f^t(x^t) = \langle{m^t, x^t \rangle}$ where~$m^t \in \calJ$ is the observed payoff vector.

\noindent\textbf{Optimistic Follow The Regularized Leader.} In view of our analysis, we will present our optimistic multiplicative update algorithm as an instance of the general framework of Optimistic Follow-The-Regularized Leader \eqref{OFTRL} in online learning.  
Consider any strongly convex regularizer~$\Phi: \text{int}(\mathcal{K}) \to \mathbb{R}$. 
As introduced and studied in e.g. \citet{rakhlin-sridharan13colt,rakhlin-sridharan13neurips,syrgkanis2015fast}, \eqref{OFTRL} with step size~$\eta > 0$ can be written as follows over the generalized   simplex $\DK$ with $x^0 = \underset{x\in \DK}{\argmin} \,\Phi(x)$ and for all $t \geq 0$\,,
\begin{align}
x^{t+1}    &= \underset{x\in \DK}{\argmax} \left\{ \eta \left\langle \sum_{k=1}^t m^k + \tilde{m}^{t+1}, x \right\rangle - \Phi(x) \right\}
\tag{OFTRL}\label{OFTRL}
\end{align}
where~$(\tilde{m}^t)$ is a predictor sequence, typically $\tilde{m}^{t+1} = m^t\,.$ Note that setting~$\tilde{m}^{t+1} = 0$ in \eqref{OFTRL} results in the celebrated FTRL algorithm. 

Using SCNE as a regularizer ($\Phi = \Phi_{\text{ent}}$) as defined in \eqref{eq:neg-entropy}, we obtain our algorithm. 
The algorithm updates a sequence of weights~$(w^t)$ accumulating the payoff vectors with an additional optimistic term which is given by the adaptive predictor sequence~$(\tilde{m}^t).$ SCNE yields closed-form updates via the EJA exponential and trace normalization to map weights into the generalized simplex. 

\begin{mdframed}[style=MyFrame]
\centering\noindent{\bf Optimistic Symmetric Cone Multiplicative Weights Updates}
\begin{equation}
\tag{OSCMWU}\label{OSCMWU}
w^{t+1} = \eta \left(\sum_{k=1}^t m^k + \tilde{m}^{t+1}\right),\,  x^{t+1} = \frac{\expl (w^{t+1})}{\tr(\expl (w^{t+1}))},
\end{equation}
for all $t \geq 1$, where~$(\tilde{m}^t)$ is a predictor sequence, typically $\tilde{m}^{t+1} = m^t\,.$ 
\end{mdframed}

The particular case where~$\tilde{m}^{t+1} = 0$ corresponds to the SCMWU algorithm studied in \citet{canyakmaz-et-al23}. When~$\mathcal{K} = \mathbb{S}_+^n$, variants of our optimistic algorithm with matrix updates have been recently introduced and analyzed in \citet{vasconcelos-et-al23} for quantum zero-sum games. 

\begin{remark}
While the terminology of `multiplicative weights' is clearly justified for the standard simplex, it is less relevant in our more general setting because the exponential of a sum of EJA elements is not equal to the product of the exponentials in general. Therefore the exponential in the update of the decision variable~$x^{t+1}$ cannot be split. Nevertheless, we stick to this terminology as (a) it has already been used for the particular case of the matrix multiplicative weight algorithm and (b) it highlights the link between our general algorithm and its popular particular case instances. 
\end{remark}

\begin{remark}{(Exponential computation)} Depending on the cone, the exponential map can be made explicit in closed form and computed (or approximated). For instance, this computation is straightforward for the case of the nonnegative orthant and the second-order cone whereas it is a more expensive operation for the PSD cone.  We provide a detailed discussion in Appendix~\ref{apdx:exponential-computation}. 
In the PSD case, our algorithm could also be enhanced by using randomization techniques used for matrix multiplicative weights algorithms for solving large semidefinite programs \citep{baes-burgisser-nemirovski13randomized-mirror-prox,carmon-duchi-sidford-tian19,yurtsever-et-al21scalable-sdp}, we leave this for future work. 
\end{remark}

\begin{proposition}
\label{prop:oscmwu-as-oftrl}
For any symmetric cone~$\mathcal{K}$, the iterates of \eqref{OSCMWU} coincide with the iterates of \eqref{OFTRL} with the symmetric cone negative entropy regularizer ($\Phi = \Phi_{\text{ent}}$).
\end{proposition}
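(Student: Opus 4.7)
The plan is to identify a change of variable that reduces the algebra of \eqref{OSCMWU} to the optimality conditions of \eqref{OFTRL}, and then solve a constrained optimization problem on $\Delta_{\mathcal{K}}$ in closed form using the EJA exponential map. Setting $w^{t+1} := \eta\bigl(\sum_{k=1}^{t} m^{k} + \tilde{m}^{t+1}\bigr)$ turns the \eqref{OFTRL} update into
\[
x^{t+1} \;=\; \argmax_{x \in \Delta_{\mathcal{K}}} \bigl\{ \langle w^{t+1}, x\rangle - \Phi_{\text{ent}}(x) \bigr\}\,.
\]
Since $\Phi_{\text{ent}}$ is strictly convex on $\operatorname{int}(\mathcal{K})$ and blows up at the boundary (as $\lambda \ln \lambda \to 0$ but the gradient diverges), this maximum exists uniquely and is attained at an interior point of $\mathcal{K}$, so the only active constraint is the equality $\tr(x) = \langle e, x\rangle = 1$.

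Next, I would compute $\nabla \Phi_{\text{ent}}$ on $\operatorname{int}(\mathcal{K})$. The key calculation is that for any sufficiently smooth scalar function $g$, the EJA-Löwner lift satisfies $\nabla \bigl[\tr(g(x))\bigr] = g'(x)$, where $g'(x)$ denotes the Löwner extension of the scalar derivative; this follows from differentiating the spectral decomposition and invoking associativity of the trace inner product. Applying this to $g(\lambda)=\lambda \ln \lambda$ with $g'(\lambda)=\ln \lambda + 1$, I get
\[
\nabla \Phi_{\text{ent}}(x) \;=\; \ln x + e\,.
\]
If this identity is not already proved in the paper's appendix, I would either cite it as standard EJA calculus or include a short lemma; this derivation is the only technical step and is the main obstacle, but it is routine once the Löwner/spectral machinery is available.

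With the gradient in hand, the Lagrangian first-order condition for the trace-one constraint (with multiplier $\mu \in \mathbb{R}$) reads
\[
w^{t+1} - \ln x^{t+1} - e - \mu e \;=\; 0 \,,
\]
so $\ln x^{t+1} = w^{t+1} - (1+\mu)\,e$. Exponentiating (and using that $\exp$ and $\ln$ are inverses on $\operatorname{int}(\mathcal{K})$, together with the fact that scalar multiples of $e$ commute under $\circ$ and thus pull out of $\exp$) yields
\[
x^{t+1} \;=\; e^{-(1+\mu)} \cdot \exp(w^{t+1})\,.
\]

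Finally, I impose the constraint $\tr(x^{t+1}) = 1$, which forces $e^{-(1+\mu)} = 1/\tr(\exp(w^{t+1}))$ and gives
\[
x^{t+1} \;=\; \frac{\exp(w^{t+1})}{\tr(\exp(w^{t+1}))}\,,
\]
matching the \eqref{OSCMWU} update exactly. The initial iterate is handled similarly: $x^{0} = \argmin_{x\in \Delta_{\mathcal{K}}}\Phi_{\text{ent}}(x)$ corresponds to $w^{0}=0$, which gives $x^{0}= e/\tr(e) = e/r$, consistent with both updates. This completes the identification of the two iterate sequences.
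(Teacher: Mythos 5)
Your proposal is correct and takes essentially the same route as the paper: the paper's proof simply defers to Lemma~4.1 of \citet{canyakmaz-et-al23} (the equivalence of non-optimistic SCMWU with FTRL), and the standard argument behind that equivalence is exactly your Lagrangian/first-order-condition derivation, which carries over unchanged since optimism only modifies the definition of $w^{t+1}$. The one step you should anchor with a citation is the spectral-function gradient identity $\nabla \Phi_{\text{ent}}(x) = \ln x + e$ on $\operatorname{int}(\mathcal{K})$ (together with the interiority of the maximizer, which follows from the divergence of this gradient at the boundary); both are standard EJA facts available, e.g., in \citet{baes06thesis}.
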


\section{Regret Analysis and Average Iterate Convergence Guarantees}

\subsection{Individual Regret Bound and Strong Convexity of Symmetric Cone Negative Entropy}

The iterates of \eqref{OFTRL} satisfy the so-called regret bounded by variation in utilities (RVU) property introduced in \citet{syrgkanis2015fast}. 

\begin{proposition}
\label{prop:rvu-bound-oscmwu}
The sequence~$(x^t)$ generated by \eqref{OFTRL} with $\tilde{m}^{t+1} = m^t \in \Delta_{\mathcal{K}}$ for all~$t$ and 
a regularizer $\Phi$ that is 1-strongly convex
w.r.t. a norm~$\|\cdot\|$ satisfies for any~$T \geq 1,$ for any $x \in \Delta_{\mathcal{K}},$
\begin{equation*}
\sum_{t=1}^T f^t(x) - f^t(x^t)
\leq \frac{R}{\eta} + \eta \sum_{t=1}^T \|m^t - m^{t-1}\|_{*}^2 
- \frac{1}{4 \eta} \sum_{t=1}^T \|x^t - x^{t-1}\|^2\,,
\end{equation*}
where~$R = \sup_{x \in \Delta_{\mathcal{K}}} \Phi(x) - \inf_{x \in \Delta_{\mathcal{K}}} \Phi(x)\,,$ $\|\cdot\|_{*}$ is the dual norm and~$\langle \cdot, \cdot \rangle$ the EJA inner product. 
\end{proposition}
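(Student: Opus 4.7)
The plan is to apply the standard Regret-bounded-by-Variation-in-Utilities analysis for Optimistic FTRL (\citet{syrgkanis2015fast,rakhlin-sridharan13colt}) in our EJA setting. Since $(\mathcal{J}, \langle \cdot, \cdot \rangle)$ with the associative trace inner product $\langle x, y \rangle = \tr(x \circ y)$ is a finite-dimensional real inner product space and $\Phi$ is assumed 1-strongly convex with respect to the norm $\|\cdot\|$, the classical Euclidean proof transfers verbatim.

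\textbf{Step 1 (Linearize).} Concavity and differentiability of $f^t$ yield $f^t(x) - f^t(x^t) \le \langle m^t, x - x^t\rangle$, so it suffices to bound the linear regret $\sum_{t=1}^T \langle m^t, x - x^t\rangle$.

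\textbf{Step 2 (Be-the-Regularized-Leader).} Introduce the auxiliary non-optimistic FTRL sequence
$$\hat{x}^{t+1} := \argmax_{x \in \Delta_\mathcal{K}} \left\{ \eta \left\langle \sum_{s=1}^t m^s,\, x \right\rangle - \Phi(x) \right\}.$$
The standard BTRL induction, using the optimality of $\hat{x}^{t+1}$, gives $\sum_{t=1}^T \langle m^t, x - \hat{x}^{t+1}\rangle \le R/\eta$ for every $x \in \Delta_\mathcal{K}$.

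\textbf{Step 3 (Stability via strong convexity).} The iterates $\hat{x}^{t+1}$ and $x^t$ are maximizers of two strongly concave programs whose linear parts are $\eta \sum_{s \le t} m^s$ and $\eta\bigl(\sum_{s \le t-1} m^s + m^{t-1}\bigr)$ respectively (using $\tilde{m}^t = m^{t-1}$), so they differ by the shift $\eta(m^t - m^{t-1})$. Passing to the Fenchel conjugate of $\Phi|_{\Delta_\mathcal{K}}$, which is $1$-smooth with respect to $\|\cdot\|_*$ by duality with 1-strong convexity, yields the stability bound $\|\hat{x}^{t+1} - x^t\| \le \eta\|m^t - m^{t-1}\|_*$.

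\textbf{Step 4 (Combine via three-point identity and Young).} Decompose $\langle m^t, x - x^t\rangle = \langle m^t, x - \hat{x}^{t+1}\rangle + \langle m^t, \hat{x}^{t+1} - x^t\rangle$, bound the first sum by $R/\eta$ using Step 2, and handle the second using a three-point Bregman identity for $\Phi$ together with the strong-convexity lower bound $D_\Phi(u,v) \ge \tfrac12 \|u-v\|^2$ and Young's inequality $\langle a, b\rangle \le \eta\|a\|_*^2 + \tfrac{1}{4\eta}\|b\|^2$. This produces a $+\eta\|m^t - m^{t-1}\|_*^2$ contribution offset by a $-\tfrac{1}{4\eta}\|x^t - x^{t-1}\|^2$ contribution; summing over $t$ yields the claimed RVU bound.

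\textbf{Main obstacle.} The only delicate step is extracting the negative quadratic term with the sharp constant $\tfrac{1}{4\eta}$. This requires the three-point/Bregman decomposition combined with the right Young's inequality so that the strong-convexity "penalty" from consecutive iterates exactly cancels half of the variation term, rather than a naive Cauchy--Schwarz bound. All other ingredients are a direct transcription of the well-known OFTRL analysis to the EJA setting, using only that $(\mathcal{J}, \langle\cdot,\cdot\rangle)$ is a finite-dimensional real inner product space.
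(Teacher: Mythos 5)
Your proposal follows exactly the route the paper takes: the paper's proof of Proposition~\ref{prop:rvu-bound-oscmwu} simply invokes Proposition~7 of \citet{syrgkanis2015fast} (their Theorem~19 regret bound plus Lemma~20 stability bound) and notes that the argument transfers verbatim once the Euclidean inner product and dual-norm pair are replaced by the EJA inner product and trace dual norms on the generalized simplex, which is precisely the be-the-leader decomposition, strong-convexity stability estimate, and Young's-inequality combination you sketch. Your Steps 1--4 are a faithful (and slightly more explicit) transcription of that same analysis, so there is no substantive difference to report.
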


\begin{remark}
\label{rem:ln-r-dep}
The constant~$R$ in the regret bound of Proposition~\ref{prop:rvu-bound-oscmwu} implicitly captures the performance dependence on the complexity of the decision space~$\Delta_{\mathcal{K}}$. When the regularizer~$\Phi$ is the negative symmetric cone entropy ($\Phi = \Phi_{\text{ent}}$), note that $R \leq \ln r$ where $r$ is the rank of the EJA. In particular, when $\mathcal{K} = \mathbb{R}_+^n$ or $\mathbb{S}_{+}^n$ or $\mathbb{H}_{+}^n$ , $r = n\,.$ When $\mathcal{K} = \mathbb{L}_+^n$ (second-order cone), $r = 2\,.$ Notably, the dependence of the regret on the complexity of the decision space is logarithmic. 
\end{remark}

\noindent\textbf{Application to \eqref{OSCMWU}.} Our goal now is to obtain the regret bound of Proposition~\ref{prop:rvu-bound-oscmwu} for our \eqref{OSCMWU} algorithm. Recall now from Proposition~\ref{prop:oscmwu-as-oftrl} that \eqref{OSCMWU} can be seen as an \eqref{OFTRL} algorithm using the SCNE regularizer~$\Phi_{\text{ent}}$. Therefore, our main challenge to obtain our regret guarantee is to establish the strong convexity of the SCNE w.r.t. the trace-1 norm.  This is one of our key contributions which requires many technical developments relying heavily on the theory of EJAs. The symmetric cone negative entropy is 
1-strongly convex w.r.t. the trace-1 norm on the interior of the generalized simplex. 

\begin{theorem}(Strong convexity of the symmetric cone negative entropy).  
\label{thm:sc-scne}
Let $(\calJ, \circ)$ be an EJA and let~$\calK$ be its cone of squares. 
Then for all $x, y \in \text{int}(\Delta_{\mathcal{K}}), D_{\Phi_{\text{ent}}}(x,y) \geq \frac{1}{2} \|x - y\|_{\tr, 1}^2\,, $
where we recall that $\Phi_{\text{ent}}$ is the symmetric cone negative entropy, i.e. $\Phi_{\text{ent}}(x) = \tr(x \circ \ln x)$ for all $x \in int(\mathcal{K})$, and~$D_{\Phi_{\text{ent}}}$ is the Bregman divergence $D_{\Phi_{\text{ent}}}(x,y) = \tr(x \circ \ln x - x \circ \ln y + y - x)\,.$\footnote{See appendix for more details on the Bregman divergence.}  
\end{theorem}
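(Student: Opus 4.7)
The plan is to emulate the classical and quantum proofs of Pinsker's inequality, reducing to a binary comparison via a data processing inequality (DPI) adapted to the EJA setting. This is natural because: in the simplex case the strong convexity of $\Phi_{\text{ent}}$ w.r.t. $\|\cdot\|_1$ is classical Pinsker; in the spectraplex case it follows from quantum Pinsker, proved via Lindblad--Uhlmann monotonicity and reduction to a two-outcome POVM. I would carry the same strategy over to all symmetric cones by using the spectral theorem in EJAs in place of the finite-dimensional spectral theorem for Hermitian matrices.

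\textbf{Step 1: spectral decomposition of the difference.} Set $z := x - y \in \calJ$ and fix a Jordan frame $\{q_1, \ldots, q_r\}$ diagonalizing $z$, so $z = \sum_{i=1}^r \lambda_i q_i$. Let $p := \sum_{i : \lambda_i > 0} q_i$ be the support idempotent of the positive part $z_+$ and set $p' := e - p$. Because $x, y \in \Delta_\calK$ are trace-one, $\tr(z) = 0$, hence $\tr(z_+) = \tr(z_-)$ and
\begin{equation*}
\tr(z \circ p) \;=\; \tr(z_+) \;=\; \tfrac{1}{2}\bigl(\tr(z_+) + \tr(z_-)\bigr) \;=\; \tfrac{1}{2}\|x - y\|_{\tr, 1}.
\end{equation*}

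\textbf{Step 2: data processing for the two-outcome Jordan ``measurement'' (main obstacle).} I would then establish the core technical statement: for any idempotent $p \in \calJ$ with $p' := e - p$ and any $x, y \in \text{int}(\Delta_\calK)$,
\begin{equation*}
D_{\Phi_{\text{ent}}}(x, y) \;\geq\; D_{\mathrm{KL}}\!\bigl( (\tr(x \circ p), \tr(x \circ p')) \;\big\|\; (\tr(y \circ p), \tr(y \circ p')) \bigr).
\end{equation*}
My approach would be to exploit the Pierce decomposition $\calJ = \calJ(p, 1) \oplus \calJ(p, \tfrac{1}{2}) \oplus \calJ(p, 0)$ together with a Gibbs-type variational representation of $D_{\Phi_{\text{ent}}}$, and invoke operator-convexity properties of the Löwner-extended logarithm (a Klein/Peierls--Bogoliubov inequality for the EJA exponential). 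Equivalently, one can try to lift the Lindblad--Uhlmann argument: the mapping $x \mapsto \tr(x \circ p)\,p + \tr(x \circ p')\,p'$ is a trace-preserving, unital, ``positive'' map in the Jordan sense, and one shows that the SCNE decreases under it. The subtlety specific to the EJA setting, and the reason this is genuinely new beyond the classical and quantum cases, is that the off-diagonal Pierce block $\calJ(p, \tfrac{1}{2})$ has no direct classical analog; its interaction with $\circ$ and $\ln$ must be controlled so that only the ``diagonal'' entries $\tr(x\circ p),\tr(x\circ p')$ survive in the lower bound.

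\textbf{Step 3: classical binary Pinsker and conclusion.} With the DPI in hand and denoting $a := \tr(x \circ p)$, $b := \tr(y \circ p)$, the classical two-point Pinsker inequality gives $D_{\mathrm{KL}}\bigl((a, 1-a)\,\|\,(b, 1-b)\bigr) \geq 2(a - b)^2$. Combined with Step 1, $|a - b| = |\tr(z \circ p)| = \tfrac{1}{2}\|x - y\|_{\tr, 1}$, which yields
\begin{equation*}
D_{\Phi_{\text{ent}}}(x, y) \;\geq\; 2 \cdot \tfrac{1}{4}\|x - y\|_{\tr, 1}^2 \;=\; \tfrac{1}{2}\|x - y\|_{\tr, 1}^2,
\end{equation*}
as required.

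\textbf{Main obstacle.} The crux is Step 2: formulating and proving the right DPI at the EJA level of generality, since neither the classical grouping argument nor the operator-theoretic machinery used in the quantum case transfer verbatim. I expect the cleanest route is to identify the two-outcome Pierce projection as the EJA analog of a binary POVM and combine a Jordan-algebraic Gibbs variational principle with monotonicity of $\ln$, thereby providing a uniform proof that specializes to both classical and quantum Pinsker.
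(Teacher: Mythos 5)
Your overall architecture — reduce to a classical Pinsker inequality via a data-processing inequality (DPI) for a pinching built from the Jordan frame of $x-y$ — is the same as the paper's, and your Steps 1 and 3 are sound (modulo a small fix: the two-outcome map must be normalized to $x \mapsto \frac{\tr(x\circ p)}{\tr(p)}p + \frac{\tr(x\circ p')}{\tr(p')}p'$ to be trace-preserving and to make the Bregman divergence of the images equal the binary KL). The genuine gap is Step 2, which you correctly flag as the crux but for which neither of your sketched routes goes through as stated. The "lift Lindblad--Uhlmann" route would require monotonicity of the EJA relative entropy under general positive, unital, trace-preserving (doubly stochastic) Jordan maps; this is precisely what is \emph{not} available off the shelf. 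The paper explicitly notes that the identity $\mathrm{DS}(\mathcal{J}) = \mathrm{conv}(\mathrm{Aut}(\mathcal{J}))$ holds for the Jordan spin algebra but fails already for $3\times 3$ complex Hermitian matrices, and that the pointwise version valid for simple EJAs is useless here because joint convexity needs a \emph{single} convex decomposition working simultaneously for $x$ and $y$. Likewise, a "Gibbs variational principle / Klein--Peierls--Bogoliubov inequality for the EJA exponential" strong enough to control the off-diagonal Peirce block $\mathcal{J}(p,\tfrac12)$ is not a standard citable fact, so as written Step 2 is a conjecture, not a proof.

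What the paper does instead is prove the DPI for the \emph{full} $r$-outcome pinching $T(z) = \sum_{i=1}^r \langle z, q_i\rangle q_i$ by an explicit construction: using the quadratic representations $P_{\omega_j}$ with $\omega_j = e - 2e_j$ (which are automorphisms since $\omega_j^2 = e$), it builds a recursive sequence of "pinching" operators and shows, via the Peirce multiplication rules, that their composition equals the diagonal map and is a convex combination of automorphisms \emph{independent of the point being mapped}. The DPI then follows from Faybusovich's joint convexity of the EJA relative entropy together with its invariance under automorphisms (using $\ln\phi(x) = \phi(\ln x)$). After that the paper applies the $r$-dimensional classical Pinsker inequality directly, which yields the same constant $\tfrac12$ as your binary reduction. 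If you want to salvage your two-outcome formulation, the cleanest repair is to first establish the full-pinching DPI as above and then apply the classical DPI (grouping of outcomes) to the resulting $r$-dimensional probability vectors before invoking binary Pinsker — but the full-pinching DPI is exactly the ingredient your proposal is missing.
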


We contribute a new proof which relies on using a data processing inequality for the specific case of a diagonal mapping\footnote{This is the mapping associating to a symmetric matrix its diagonal in the EJA of symmetric matrices.} in general EJAs followed by an application of Pinsker's inequality (see Eq.~\eqref{eq:heart-proof-sc} in Appendix~\ref{sec:proof-thm-sc-scne} for the core step of the proof). We prove in particular that the diagonal mapping can be written as a convex combination of EJA automorphisms and we combine this result with the known joint convexity of the relative entropy for EJAs \citep{faybusovich16lieb-cvx-ejas} to obtain the desired inequality. We defer the complete proof to Appendix~\ref{sec:proof-thm-sc-scne}. 
An alternative proof for strong convexity can rely on the celebrated duality between strong convexity and strong smoothness (w.r.t. arbitrary norms) which is used to lower bound the Hessian of the negative entropy, see corollary 6.4.5 p.192 in  \citet{baes06thesis} leading to the same strong convexity constant (equal to 1) as in Theorem~\ref{thm:sc-scne}. 

\begin{remark}
Theorem~\ref{thm:sc-scne} recovers as special cases the strong convexity of the negative entropy w.r.t. the one norm for the probability simplex and the trace one slice of PSD cones in the optimization (see e.g. Proposition 5.1 in \citet{beck-teboulle03mirror-descent}) and (quantum) information theory literatures (see e.g. \citet{yu13strong-conv-entropy}).
\end{remark}

\begin{remark}
\citet[Lemma 3.2.c]{chen-pan10} shows that the SCNE is strictly convex on the symmetric cone (see also \citet[Lemma 2.1 (ii)]{canyakmaz-et-al23}). We show that the SCNE is strongly convex with respect to the trace 1 norm, which is a stronger result.
\end{remark}

\subsection{Sum of Regrets Bound in Symmetric Cone Games}  
\label{sec:regret-osc-games}

In this section we leverage our strong convexity result for the SCNE (Theorem~\ref{thm:sc-scne}) to provide regret guarantees in SCGs. We start by deriving a constant bound on the sum of regrets of players using \eqref{OSCMWU} and the OFTRL framework. Then, we focus on two-player zero-sum SCGs to solve our min-max problem~\eqref{eq:min-max-pb_intro} and find an approximate saddle point using the regret guarantees. 

 If each player runs \eqref{OSCMWU}, then the sum of regrets of all players is bounded by a constant. The analysis relies on using the RVU bound in Proposition~\ref{prop:rvu-bound-oscmwu}. Using Assumption~\ref{as:payoff-vector}, we follow the analysis of \citet{syrgkanis2015fast} which is valid for any \eqref{OFTRL} algorithm with a strongly convex regularizer w.r.t. any norm (when all players run the same algorithm) to obtain the following result.

\begin{theorem}
\label{thm:sum-regret-bound}
Let Assumption~\ref{as:payoff-vector} hold and let each player~$i \in \mathcal{N}$ runs \eqref{OSCMWU} for~$T$ rounds on their strategy space~$\Delta_{\mathcal{K}_i}$\footnote{They need not be the same for all players, see the consequence on the regret with the dependence on~$R_i$.} with a fixed positive stepsize $\eta = 1/(2 \sqrt{N \sum_{i=1}^N L_i^2})$  
and set $\|\cdot\| = \|\cdot\|_{\tr,1}\,.$ Then the sum of regrets is bounded as: 
$\sum_{i = 1}^N r_i(T) \leq 2 \,(\sum_{i= 1}^N R_i) \cdot \sqrt{N \sum_{i=1}^N L_i^2} \,,$
where $r_i(T)$ is the $i$-th player's individual regret defined in~\eqref{eq:regret-player-i} and $R_i = \sup_{x \in \Delta_{\mathcal{K}_i}} \Phi(x) - \inf_{x \in \Delta_{\mathcal{K}_i}} \Phi(x)$.  
\end{theorem}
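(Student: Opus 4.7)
The plan is to apply the RVU bound from Proposition \ref{prop:rvu-bound-oscmwu} individually to each player, sum the inequalities over the $N$ players, and then choose the stepsize so that the negative quadratic term in the iterate differences absorbs the positive quadratic term in the payoff vector differences (produced by the Lipschitz assumption). The strong convexity constant needed in Proposition \ref{prop:rvu-bound-oscmwu} is supplied by Theorem \ref{thm:sc-scne}, which certifies that $\Phi_{\text{ent}}$ is $1$-strongly convex with respect to $\|\cdot\|_{\tr,1}$; this fixes the dual norm in the RVU bound to be $\|\cdot\|_{\tr,\infty}$, the exact norm appearing in Assumption \ref{as:payoff-vector}.

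\noindent\textbf{Step 1 (per-player RVU).} For each player $i \in \mathcal{N}$, viewing the sequence of concave utilities $x \mapsto u_i(x, x_{-i}^t)$ as the online gains, Proposition \ref{prop:rvu-bound-oscmwu} with $\Phi = \Phi_{\text{ent}}$ on $\Delta_{\mathcal{K}_i}$ yields
\begin{equation*}
r_i(T) \;\leq\; \frac{R_i}{\eta} \;+\; \eta \sum_{t=1}^T \|m_i^t - m_i^{t-1}\|_{\tr,\infty}^2 \;-\; \frac{1}{4\eta} \sum_{t=1}^T \|x_i^t - x_i^{t-1}\|_{\tr,1}^2 .
\end{equation*}

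\noindent\textbf{Step 2 (Lipschitz substitution and norm inequality).} Applying Assumption \ref{as:payoff-vector} gives $\|m_i^t - m_i^{t-1}\|_{\tr,\infty}^2 \leq L_i^2 \|x^t - x^{t-1}\|_{\tr,1}^2$, where the joint norm on $\mathcal{J} = \prod_i \mathcal{J}_i$ is $\|x\|_{\tr,1} = \sum_i \|x_i\|_{\tr,1}$. A standard Cauchy--Schwarz step then yields
\begin{equation*}
\|x^t - x^{t-1}\|_{\tr,1}^2 \;=\; \Bigl(\sum_{i=1}^N \|x_i^t - x_i^{t-1}\|_{\tr,1}\Bigr)^{2} \;\leq\; N \sum_{i=1}^N \|x_i^t - x_i^{t-1}\|_{\tr,1}^2 .
\end{equation*}

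\noindent\textbf{Step 3 (sum over players and stepsize choice).} Summing the RVU bound over $i$ and substituting the two inequalities above produces
\begin{equation*}
\sum_{i=1}^N r_i(T) \;\leq\; \frac{\sum_i R_i}{\eta} \;+\; \Bigl(\eta N \sum_{i=1}^N L_i^2 \;-\; \frac{1}{4\eta}\Bigr) \sum_{t=1}^T \sum_{i=1}^N \|x_i^t - x_i^{t-1}\|_{\tr,1}^2 .
\end{equation*}
Choosing $\eta = 1/(2\sqrt{N \sum_i L_i^2})$ makes the coefficient in the second term vanish (in fact, non-positive), leaving $\sum_i r_i(T) \leq (\sum_i R_i)/\eta = 2 (\sum_i R_i) \sqrt{N \sum_i L_i^2}$, which is the claimed bound.

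The only nontrivial piece of the argument is the interaction between the product-space trace-$1$ norm and the per-player RVU bound: one must be careful that the Lipschitz assumption is stated with respect to the joint $\|\cdot\|_{\tr,1}$ (whose dual matches the per-player $\|\cdot\|_{\tr,\infty}$ delivered by Theorem \ref{thm:sc-scne}), and that the factor of $N$ incurred by passing from joint iterate differences back to per-player iterate differences is correctly tracked so it can be cancelled by the $1/(4\eta)$ term; this is precisely why the optimal stepsize scales as $1/\sqrt{N \sum_i L_i^2}$ rather than $1/\sqrt{\sum_i L_i^2}$. Everything else is bookkeeping on the RVU inequality already proved.
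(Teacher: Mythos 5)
Your proposal is correct and follows essentially the same route as the paper's proof: per-player RVU bound from Proposition~\ref{prop:rvu-bound-oscmwu}, substitution of Assumption~\ref{as:payoff-vector}, a Jensen/Cauchy--Schwarz step to pick up the factor of $N$, summation over players, and the stepsize choice that cancels the positive quadratic term against the $-\frac{1}{4\eta}$ term. The only cosmetic difference is that the paper invokes a small triangle-inequality lemma (Lemma~\ref{lem:triangle-ineq}) to split the joint iterate difference into per-player norms, whereas you use the additivity of the trace-$1$ norm over the product EJA directly; both are valid and lead to the identical bound.
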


\subsection{Application to Two-Player Zero-Sum Symmetric Cone Games}

We consider now the min-max problem:
\begin{equation}
\label{eq:min-max-pb}
\underset{x \in \Delta_{\mathcal{K}_1}}{\min} \underset{y \in \Delta_{\mathcal{K}_2}}{\max} f(x, y)\,,
\end{equation}
where $f: \calJ_1 \times \calJ_2 \to \R$ is convex-concave and differentiable, $\mathcal{K}_1, \mathcal{K}_2$ are arbitrary symmetric cones and $\Delta_{\mathcal{K}_1}, \Delta_{\mathcal{K}_2}$ their associated generalized simplices. 

We employ well-known results in the online learning literature to obtain an approximate saddle point for the min-max problem using two uncoupled online learning players (see e.g. section~5 in \citet{freund-shapire99MW}, sections~7.1-3 in  \citet{cesa-bianchi-lugosi06book} and sections~11.1-2 in \citet{orabona-et-al19book}) and the regret guarantees of Theorem~\ref{thm:sum-regret-bound}. Specifically, we use the link between the sum of individual players' regrets, the so-called duality gap in min-max problems, and the suboptimality gaps for each one of the dual min-max and max-min problems. For the convenience of the reader, we record the result we use in Theorem~\ref{thm:2pzs-avg-regret} in Appendix~\ref{appx:proof-thm-2pzs-avg-regret}  
and provide a complete proof.  Note that this result (Theorem~\ref{thm:2pzs-avg-regret}) is independent from any specific algorithm used and applies to generalized simplices.

We combine this result (Theorem~\ref{thm:2pzs-avg-regret}) with our regret guarantees obtained in Theorem~\ref{thm:sum-regret-bound} for \eqref{OSCMWU} to solve our saddle point problem~\eqref{eq:min-max-pb} of interest for which the generalized min-max theorem holds. 
To state our result, we respectively define the duality gap~$d(\bar{x}, \bar{y})$ and the suboptimality gaps~$d_1(\bar{x}), d_2(\bar{y})$ for the min and max dual problems for any $(\bar{x}, \bar{y}) \in \Delta_{\mathcal{K}_1} \times \Delta_{\mathcal{K}_2} $ as follows: 
\begin{align}
d(\bar{x}, \bar{y}) &:= \max_{y \in \Delta_{\mathcal{K}_2}} f(\bar{x},y) - \min_{x \in \Delta_{\mathcal{K}_1}} f(x, \bar{y})\,, \label{eq:duality-gap}\\
d_1(\bar{x}) := \max_{y \in \Delta_{\mathcal{K}_2}} &f(\bar{x},y) - v^*\,, \quad
d_2(\bar{y}) := v^* - \min_{x \in \Delta_{\mathcal{K}_1}} f(x, \bar{y})\,,
\end{align}
where $v^*$ is the min-max value of the zero-sum game, i.e. $v^*:= \min_{x \in \Delta_{\mathcal{K}_1}} \max_{y \in \Delta_{\mathcal{K}_2}} f(x,y) = \max_{y \in \Delta_{\mathcal{K}_2}} \min_{x \in \Delta_{\mathcal{K}_1}} f(x,y)\,.$
\begin{theorem}
\label{thm:minmax}
Consider a two-player symmetric cone game setting ($N=2$) and assume that the game is zero-sum $(u_2 = -u_1 = f)$. Let Assumption~\ref{as:payoff-vector} hold and suppose that both players run~\eqref{OSCMWU} with stepsize $\eta = 1/(2 \sqrt{2(L_1^2 + L_2^2)})$ for $T \geq \frac{2 (\ln r_1 + \ln r_2) \sqrt{2(L_1^2 + L_2^2)}}{\varepsilon}$ rounds where $\varepsilon > 0$ is any desired accuracy and $r_i = \rank(\calJ_i), i= 1, 2$. Then the average iterate sequences $(\bar{x}_T), (\bar{y}_T)$ defined by $\bar{x}_T = \frac{1}{T} \sum_{t=1}^T x^t, \bar{y}_T = \frac{1}{T} \sum_{t=1}^T y^t$ satisfy for $T \geq 1,$ $0 \leq d(\bar{x}_T, \bar{y}_T) \leq \varepsilon, 0 \leq d_1(\bar{x}_T) \leq \varepsilon$ and $0 \leq d_2(\bar{y}_T) \leq \varepsilon\,.$ Moreover $(\bar{x}_T, \bar{y}_T)$ is an $\epsilon$-saddle point of the min-max problem~\eqref{eq:min-max-pb}, i.e. for all $(x, y) \in \Delta_{\mathcal{K}_1} \times \Delta_{\mathcal{K}_2},$ $f(\bar{x}_T, y) - \varepsilon \leq f(\bar{x}_T, \bar{y}_T) \leq f(x, \bar{y}_T) + \varepsilon\,.$
\end{theorem}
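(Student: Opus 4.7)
The proof is essentially a bookkeeping step that chains together three already-established results: Theorem~\ref{thm:sum-regret-bound} (sum-of-regrets bound under OSCMWU), Remark~\ref{rem:ln-r-dep} (the logarithmic bound $R_i \leq \ln r_i$ for the SCNE regularizer), and Theorem~\ref{thm:2pzs-avg-regret} (the standard online-to-minimax reduction for zero-sum games). All of the analytically nontrivial work, namely the strong convexity of the symmetric cone negative entropy (Theorem~\ref{thm:sc-scne}) and the resulting RVU-type regret bound (Proposition~\ref{prop:rvu-bound-oscmwu}), has already been carried out upstream and feeds into Theorem~\ref{thm:sum-regret-bound}.

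First I would instantiate Theorem~\ref{thm:sum-regret-bound} with $N=2$, $u_2=-u_1=f$, norm $\|\cdot\|=\|\cdot\|_{\tr,1}$, and stepsize $\eta = 1/(2\sqrt{2(L_1^2+L_2^2)})$, obtaining $r_1(T)+r_2(T)\leq 2(R_1+R_2)\sqrt{2(L_1^2+L_2^2)}$. Since each player uses $\Phi_{\text{ent}}$ on $\Delta_{\mathcal{K}_i}$, Remark~\ref{rem:ln-r-dep} gives $R_i \leq \ln r_i$, so
\begin{equation*}
r_1(T)+r_2(T) \;\leq\; 2(\ln r_1 + \ln r_2)\sqrt{2(L_1^2+L_2^2)}.
\end{equation*}
Theorem~\ref{thm:2pzs-avg-regret} then converts this into a duality-gap bound
\begin{equation*}
0 \;\leq\; d(\bar x_T, \bar y_T) \;\leq\; \frac{r_1(T)+r_2(T)}{T}.
\end{equation*}
Nonnegativity is weak duality; the upper bound uses convexity of $f$ in the first argument (to push the average inside via Jensen in player~1's regret) and concavity in the second (symmetrically for player~2's regret). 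The hypothesis $T \geq 2(\ln r_1+\ln r_2)\sqrt{2(L_1^2+L_2^2)}/\varepsilon$ immediately gives $d(\bar x_T,\bar y_T)\leq\varepsilon$. Since $d_1(\bar x_T),d_2(\bar y_T)\geq 0$ by the definition of $v^*$ and $d_1(\bar x_T)+d_2(\bar y_T)=d(\bar x_T,\bar y_T)$ (strong duality holds because $\Delta_{\mathcal{K}_1},\Delta_{\mathcal{K}_2}$ are compact convex and $f$ is convex-concave, so Sion's min-max theorem applies), each suboptimality gap is also bounded by $\varepsilon$.

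The $\varepsilon$-saddle point inequalities are then a direct rewriting: for any $y$, $f(\bar x_T, y)\leq \max_{y'} f(\bar x_T, y') \leq f(\bar x_T,\bar y_T) + d(\bar x_T,\bar y_T) \leq f(\bar x_T,\bar y_T)+\varepsilon$, and symmetrically for any $x$, $f(x,\bar y_T)\geq \min_{x'} f(x',\bar y_T) \geq f(\bar x_T,\bar y_T)-\varepsilon$. I do not anticipate any serious obstacle for this particular theorem—the real difficulty was establishing the strong convexity in Theorem~\ref{thm:sc-scne} so that Theorem~\ref{thm:sum-regret-bound} could be invoked with the trace-one norm. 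The only conceptual check at this stage is that Theorem~\ref{thm:2pzs-avg-regret} (proved in the appendix for generalized simplices) genuinely applies in the symmetric cone setting, which follows from compactness and convexity of $\Delta_{\mathcal{K}_i}$ (Section~3) and the convex-concave differentiable structure of $f$ posited in~\eqref{eq:min-max-pb}.
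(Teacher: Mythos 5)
Your proposal is correct and follows essentially the same route as the paper: instantiate Theorem~\ref{thm:sum-regret-bound} with $N=2$ and $\eta = 1/(2\sqrt{2(L_1^2+L_2^2)})$, bound $R_i \leq \ln r_i$ via Remark~\ref{rem:ln-r-dep}, and feed the resulting constant sum-of-regrets bound into Theorem~\ref{thm:2pzs-avg-regret} to obtain the duality gap, suboptimality gaps, and $\varepsilon$-saddle-point conclusions. The only cosmetic difference is that you derive the individual suboptimality-gap bounds from the identity $d_1(\bar x_T)+d_2(\bar y_T)=d(\bar x_T,\bar y_T)$ together with nonnegativity, whereas the paper reads each bound off directly from its chain of inequalities; both are valid and equivalent.
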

The dependence on the complexity of the strategy spaces is logarithmic in the rank of the underlying EJAs (see remark~\ref{rem:ln-r-dep}). 
Theorem~\ref{thm:minmax} improves over the $\mathcal{O}(1/\varepsilon^2)$ iteration complexity that can be achieved using the SCMWU algorithm proposed by~\citet{canyakmaz-et-al23}. 
Our analysis deviates from the SCMWU regret analysis developed in \citet{canyakmaz-et-al23}. In particular, we leverage our general SCG setting, rely on the OFTRL framework and the strong convexity of SCNE to establish our result. 
In the special case where~$\mathcal{K}_1 = \mathcal{K}_2= \mathbb{H}_+^n,$ a similar improvement has been established for quantum zero-sum games in \citet{vasconcelos-et-al23} using a different optimistic mirror-prox like analysis \citep{nemirovski04prox} via a variational inequality perspective and several variants of optimistic matrix updates.

\section{Applications to Min-Max Problems: Metric Learning and Facility Location}
\label{sec:applications}

In this section, we discuss how to use our unified methodology, our optimistic online algorithm~\eqref{OSCMWU} and its resulting iteration complexity in the two special min-max applications discussed earlier. Unlike prior work that designs domain-specific solvers, we use the same \eqref{OSCMWU}  algorithm in all cases. To our knowledge, OSCMWU is the first optimistic algorithm applicable across all symmetric cones in SCGs. 

\subsection{Simplex-Spectraplex Games (e.g. Metric Learning)}

\noindent\textbf{Iteration complexity of \eqref{OSCMWU}.} In the game~\eqref{eq:simplex-spectraplex-game}, given that $x,Y$ are played, the players respectively observe the gain vectors $m_1 (x, Y) = - \nabla_x f(x,Y) = -\A^*(Y) - b = - (\inp{Y}{A_i})_i - b,$ and $m_2(x,Y) = \nabla_Y f(x,Y) = \A(x) + C = \sum_i x_i A_i + C$ where here $\A^*$ denotes the adjoint operator of $\A$. We have  $L_1 = L_2 = \max_i \normtrinf{A_i}$ and $R_1 = \ln m$, $R_2 = \ln d$. Thus, by Theorem \ref{thm:minmax}, to obtain an $\epsilon$-saddle point of the min-max problem it suffices for both players to run \eqref{OSCMWU} with stepsize $\eta = 1/(2 \max_{i} \normtrinf{A_i})$ for $T \geq \frac{4(\ln m + \ln d)\max_i \normtrinf{A_i}}{\epsilon}$ iterations. 

\noindent\textbf{Simulations.} We consider an instance of the metric learning application (Application 1 in section~\ref{subsec:appli-scgs}) using the Iris dataset \citep{fisher36} ($d =4$) loaded via scikit-learn \citep{pedregosa-et-al11scikit-learn}. We standardize the features, sample $|\mathcal{S}| = 400$ similar (same-label) pairs and $|\mathcal{D}| = 400$ dissimilar (different-label) pairs. We then solve the resulting simplex–spectraplex game \eqref{eq:pb-dist-metric-l} by running SCMWU and its optimistic variant \eqref{OSCMWU} for $T = 9000$ iterations with stepsizes $\eta_x = \eta_Y = 20$ starting from a random initialization. We report the duality gap of the average iterates as defined in \eqref{eq:duality-gap} on a log scale. The plotted curves in Figure~\ref{fig:metric-learning-iris} correspond to the average over 5 independent seeds (which resample $\mathcal{S}, \mathcal{D}$ and the initialization) and the shaded area is $\pm$ one standard deviation. Figure~\ref{fig:metric-learning-iris} shows a slight advantage for \eqref{OSCMWU} compared to its non-optimistic counterpart SCMWU. 

\begin{figure}[ht]
  \centering
  \includegraphics[width=0.6\linewidth]{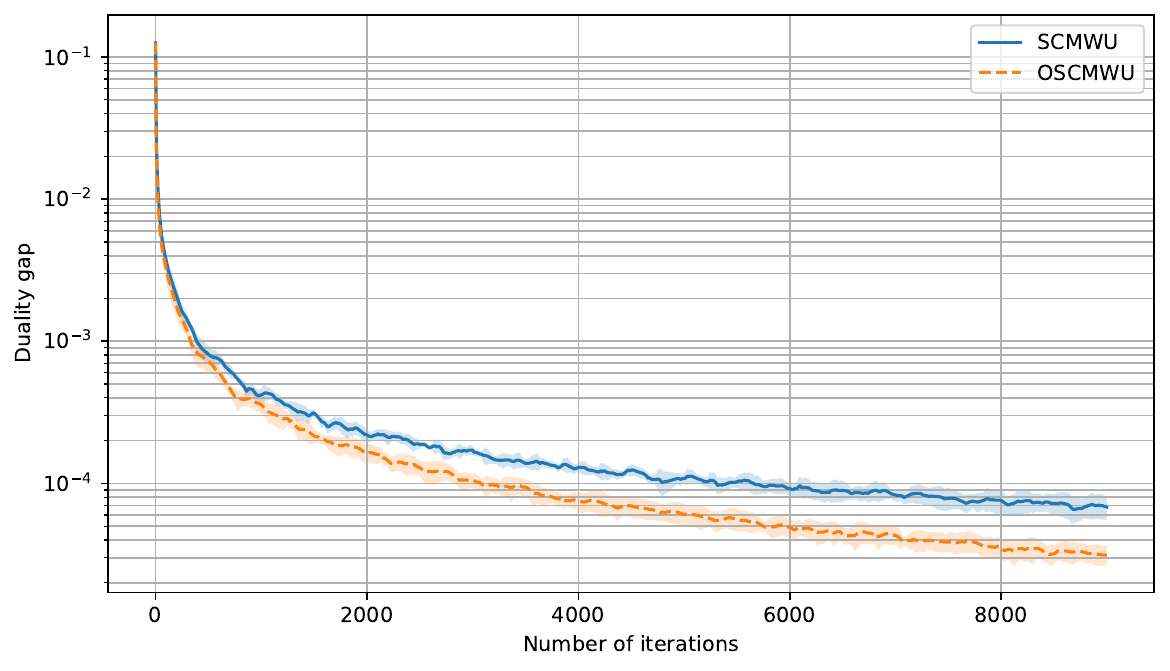}
  \caption{Duality gap of the average iterates versus number of iterations for OSCMWU and SCMWU for a distance metric-learning application on the Iris dataset (mean $\pm$ std over 5 seeds).}
  \label{fig:metric-learning-iris}
\end{figure}

\noindent\textbf{Prior work.} In contrast to prior work, we use a single algorithm \eqref{OSCMWU} for all our applications of SCGs. Existing literature proposes algorithms which are tailored to specific applications. Problem~\eqref{eq:pb-dist-metric-l} can be solved using the smoothing technique \citep{nesterov07smoothing} in~$\mathcal{O}(1/\epsilon)$ iterations and a square root logarithmic dependence on the dimension~$d$. Our iteration complexity is similar. Both methods require a full eigenvalue decomposition per iteration. This per-iteration cost can be significantly reduced using techniques such as rank sketches \citep{baes-burgisser-nemirovski13randomized-mirror-prox,carmon-duchi-sidford-tian19} while  keeping a similar total number of iterations. \citet{ying-li12distance-metric-learning} proposed a Frank-Wolfe method using the smoothing technique which only requires to compute the largest eigenvector of a matrix at each iteration which costs~$\mathcal{O}(d^2)$ runtime and improves over the per-iteration cost of a full eigenvalue decomposition. However, their algorithm requires $\mathcal{O}(1/\epsilon^2)$ iterations. We also refer the reader to \citet{garber-hazan16sublinear-sdp} for further improvements for solving instances of the general problem~\eqref{eq:simplex-spectraplex-game} with approximation algorithms running in sublinear time in the size of the data. The problem can also be solved very efficiently and with high precision using interior-point methods but their per-iteration cost becomes prohibitive for large scale problems for which first-order methods are preferred.

\subsection{Second-Order Cone Min-Max Games (e.g. Fermat-Weber Problem)}

\noindent\textbf{Iteration complexity of~\eqref{OSCMWU}}.
For this problem, it can be immediately seen that $m_1(x,y) = - \nabla_{\bar{x}} f(\bar{x}, y) = \tilde{A}^T y, m_2(x,y) = \nabla_y f(\bar{x},y) = \tilde{A} \bar{x} - \bar{b}\,.$ It follows that the Lipschitz constants satisfy $L_1 = L_2 = \max_i \|A_i\|_{\tr, \infty}$ and $R_1 = \ln(\rank(\mathbb{L}^{d+1})) = \ln 2, R_2 = p \ln(\rank(\spin{d+1})) = p \ln 2$. 
By Theorem~\ref{thm:minmax}, in order to obtain an $\epsilon$-saddle point it suffices for both players to run \eqref{OSCMWU} with stepsize $\eta = \frac{1}{2L}$ for $T \geq \frac{4(p+1)L\ln 2 }{\epsilon}$ iterations. 

\noindent\textbf{Simulations.} We consider a synthetic instance of the Fermat-Weber (sum-of-norms) problem as described in application 1 in section~\ref{subsec:appli-scgs}, in dimension \(d=20\) with \(p=50\) target points and constraint radius \(R=5\). For each seed, we generate an instance \(B=\{b_i\}_{i=1}^p\subset\mathbb{R}^d\) by sampling i.i.d.\ Gaussian directions, normalizing each sample, and then scaling each \(b_i\) to have a random norm uniformly distributed in \([0.5R,1.5R]\). We solve the associated second-order-cone min-max game \eqref{eq:Fermat-Weber-minmax} by running SCMWU and its optimistic variant \eqref{OSCMWU} for \(T=10000\) iterations with constant stepsizes \(\eta_x=\eta_y=7\times 10^{-2}\). In Figure~\ref{fig:facility-location}, we report (i) the primal objective \(g(\bar{x}_t)\) as defined in \eqref{eq:sum-of-norms-pb} evaluated at the average iterate \(\bar{x}_t\), and (ii) the duality gap computed at the averaged iterates \((\bar{x}_t,\bar{y}_t)\) as in \eqref{eq:duality-gap}, plotted on a logarithmic scale. The curves correspond to the mean over 5 seeds, and the shaded region indicates \(\pm\) one standard deviation. The resulting two-panel plot shows that the objective function decreases and stabilizes as expected and the duality gap vanishes as shown in Theorem~\ref{thm:minmax} with an advantage for \eqref{OSCMWU} compared to SCMWU. 

\begin{figure}[ht]
  \centering
  \includegraphics[width=0.75\linewidth]{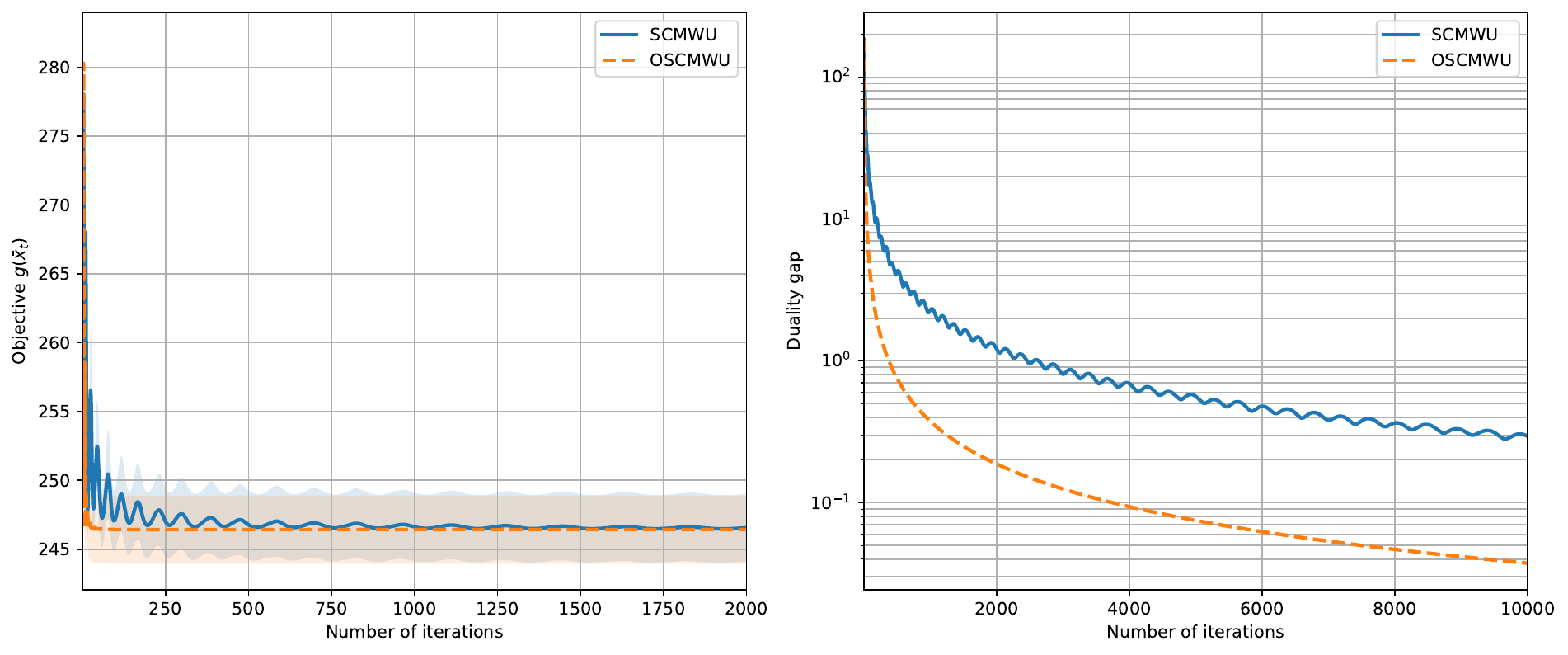}
  \caption{(Left) Objective function values of the average iterates (Right) Duality gap of the average iterates for OSCMWU and SCMWU for a facility location problem (mean $\pm$ std over 5 seeds).}
  \label{fig:facility-location}
\end{figure}

\noindent\textbf{Online facility location variant.}
We now consider an online variant of the facility location problem introduced in section~\ref{subsec:appli-scgs}. Consider a sequential setting in which demands arrive over time (streaming demand) and the decision maker must update the facility location on the fly. At each round \(t=1,2,\dots,T\), a new point \(b_t\in\mathbb{R}^d\) and a linear map $A_t \in \mathbb{R}^{m \times d}$ are revealed after the learner has committed to a location \(x_t\in C:=\{x\in\mathbb{R}^d:\|x\|_2\le R\}\). The instantaneous incurred loss is $g_t(x_t) := \|A_t x_t-b_t\|_2$. Similarly to section \ref{subsec:appli-scgs}, we define an \textit{online} saddle-point loss $f_t(\bar x,y)\;:=\;\langle \tilde A_t \bar x-\bar b_t,\, y\rangle$ for any $\bar x=(1/2,\,x/(2R))\in\Delta_{\mathbb{L}_+^{d+1}},\; y\in\Delta_{\mathbb{L}_+^{d+1}}$, where \(\tilde A_t:=2R(0\ \ \bar A_t)\), \(\bar A_t=(0,A_t^\top)^\top\) and \(\bar b_t:=(0,b_t)\). Running \eqref{OSCMWU} online corresponds to updating \(\bar x_t\) using the first-order feedback \(-\nabla_{\bar x} f_t(\bar x_t,y_t)=\tilde A_t^\top y_t\) and updating \(y_t\) using \(\nabla_y f_t(\bar x_t,y_t)=\tilde A_t\bar x_t-\bar b_t\). 
In the experiment of Figure~\ref{fig:online-facility-location}, we consider a setting with \(d=m=10\), radius \(R=1\), and horizon \(T=20000\). The data stream \((A_t,b_t)\) is generated from a temporally correlated (predictable) process: \(A_t\equiv A\) is sampled once with i.i.d.\ standard Gaussian entries and then rescaled so that \(\|A\|_2=1\), while \(b_t\in\mathbb{R}^d\) follows an AR(1) recursion \(b_t=\rho b_{t-1}+\sqrt{1-\rho^2}\,\xi_t\) with \(\xi_t\sim\mathcal{N}(0,\sigma^2 I)\) (here \(\rho=0.6\), \(\sigma=0.12\)), followed by a normalization enforcing \(\|b_t\|_2=1\). We run SCMWU and \eqref{OSCMWU} with the same constant stepsizes \(\eta_x=\eta_y=10\) from the interior initialization \((1/2,0,\dots,0)\in\Delta_{\mathbb{L}_+^{d+1}}\). We report the time-scaled sum of regrets \((r_1(t)+r_2(t))/t\) (mean \(\pm\) one standard deviation over 5 independent seeds, which resample the entire stream), as defined and bounded in Theorem~\ref{thm:sum-regret-bound}. Figure~\ref{fig:online-facility-location} shows that the time-scaled sum of regrets vanishes for both \eqref{OSCMWU} and SCMWU, with an advantage for OSCMWU in this predictable online learning setting.  

\begin{figure}[ht]
  \centering
  \includegraphics[width=0.5\linewidth]{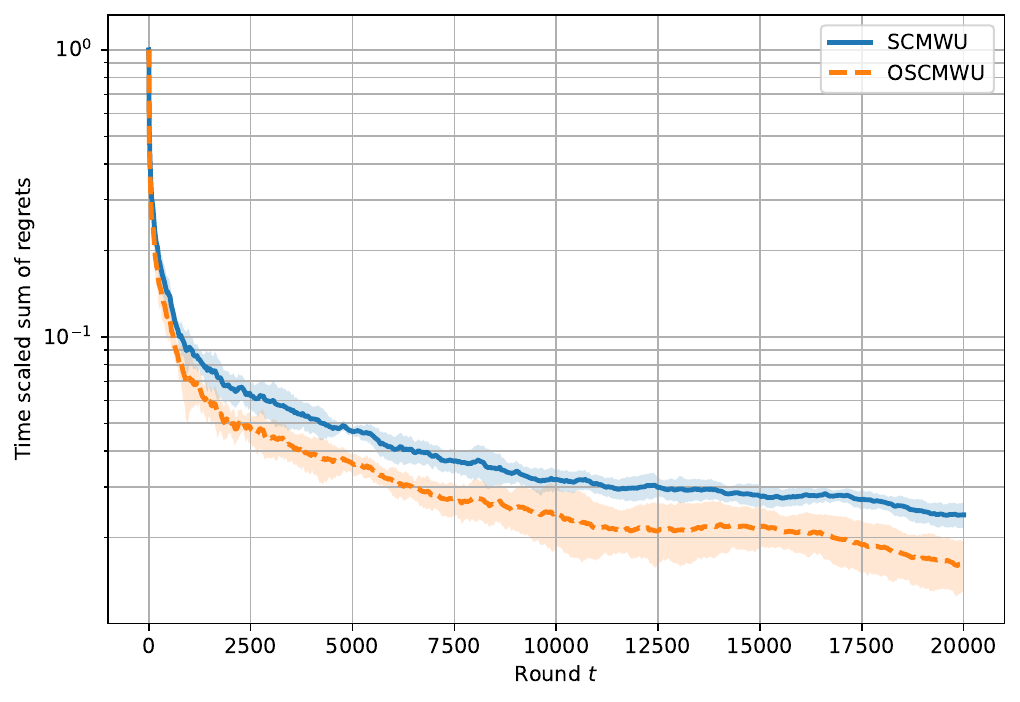}
  \caption{Time-scaled sum of regrets for OSCMWU and SCMWU for an online facility location problem (mean $\pm$ std over 5 seeds).}
  \label{fig:online-facility-location}
\end{figure}

\noindent\textbf{Prior work.} The sum-of-norms problem can be solved using an extension of the smoothing technique of Nesterov to EJAs studied in \citet{baes06thesis} (section 6.5 p.194) with a number of iterations of the order~$\mathcal{O}(1/\epsilon)$ and a $\mathcal{O}(mdp)$ per-iteration cost. \eqref{OSCMWU} enjoys similar guarantees (see the straightforward computation of the exponential for the second-order cone case in Appendix~\ref{apdx:exponential-computation}). 
For comparison, the interior point method of \citet{xue-ye97min-sum-norms} requires fewer iterations~$\mathcal{O}(\sqrt{p}(\log(\max_j \|b_j\|/\epsilon) + \log p))$ as a function of the desired accuracy~$\epsilon$ but requires a much higher per-iteration cost~$\mathcal{O}(m^3 + p m^2 n)\,.$ We note that none of the previous methods is online. In contrast to our method, they are offline methods tailored for this particular problem.

For more general hybrid SC min-max games with product of cones of different types (SOC, SDP), see appendix~\ref{appx:more-gen-hybrid}. 

\section{Conclusion}

We introduced the class of Symmetric Cone Games (SCGs) which unifies a variety of multi-player games with structured strategy spaces, including normal-form, quantum and geometric settings. SCGs provide a principled foundation for structured game-theoretic learning, bridging spectral and geometric optimization in a unified framework. We proposed OSCMWU, a universal optimistic online learning algorithm with closed-form updates, and established convergence guarantees for equilibrium computation in two-player zero-sum SCGs. Our framework paves the way for more unified approaches to structured game-theoretic learning exploiting the structure of strategy spaces beyond simplices or generic convex sets.  Promising directions for future work include extending beyond symmetric cones, tackling non-zero-sum games, and developing scalable implementations for high-dimensional and low-rank settings for the case of the PSD cone.   



\subsubsection*{Acknowledgments}

We thank the anonymous reviewers for their valuable feedback, which helped improve the manuscript.

This work is supported by the MOE Tier 2 Grant (MOE-T2EP20223-0018), Ministry of Education Singapore (SRG ESD 2024 174), the CQT++ Core Research Funding Grant (SUTD) (RS-NRCQT-00002), and partially by Project MIS 5154714 of the National Recovery and Resilience Plan, Greece 2.0, funded by the European Union under the NextGenerationEU Program. 


\bibliography{references}
\bibliographystyle{tmlr}

\newpage 
\appendix

\tableofcontents

\section{Related Work}

\noindent\textbf{Multiplicative weight updates.} The Multiplicative Weights Update (MWU) algorithm has a long history percolating several fields including game theory, online learning, optimization, machine learning and computer science. We refer the reader to \citet{arora-hazan-kale12mwu} for a nice survey.  
Beyond its classical form over the probability simplex, extensions to more complex structures—such as the Matrix Multiplicative Weights Update (MMWU) for density matrices \citep{tsuda-et-al05matrix-exp-grad,warmuth-kuzmin06online-var-min,warmuth-kuzmin12online-var-min,arora-kale16semidefinite-progs}—have broadened its applicability to semi-definite programming \citep{arora-hazan-kale05fast-sdp,arora-kale16semidefinite-progs}, quantum computing \citep{jain-et-al11qip-pspace} and graph sparsification \citep{allenzhu-et-al15spectral}. \citet{carmon-duchi-sidford-tian19} proposed rank-one sketch methods for the matrix multiplicative weight method to reduce the expensive computational cost of a full matrix exponentiation while preserving the MMWU regret guarantees. We refer the reader to the discussion therein for more details regarding this line of work. \citet{canyakmaz-et-al23} introduced the symmetric cone multiplicative weights update algorithm (SCMWU) for online learning over the trace-one slice of a symmetric cone. (SCMWU) unifies the different versions of multiplicative weights updates algorithms mentioned above. Our \eqref{OSCMWU} algorithm enhances (SCMWU) with an additive optimistic term in the weights sequence. 
\citet{zhao23multiplicative-grad-method} analyzed the multiplicative gradient method for an optimization problem in which the objective function is neither Lipschitz nor smooth.\\ 

\noindent\textbf{Optimistic online learning.} A key performance measure in online learning and game theory is regret. For min-max problems for which the min-max theorem holds, it is well-known that if the average regret is bounded above by $\epsilon$, then the time-averaged iterate $(\ave{x}, \ave{y})$ is an $\epsilon$-saddle point. In worst-case contexts in which the environment is modeled in a fully adversarial way, average regret typically vanishes at the rate of $\mathcal{O}(T^{-1/2})$, a bound known to be tight. However, this worst-case analysis is too pessimistic when players adopt predictable algorithms in games. In these settings, optimistic learning algorithms have achieved significant improvements, reducing average regret rates to $\mathcal{O}(T^{-1})$ in both two-player zero-sum games and multi-player normal-form games \citep{daskalakis-et-al11,chiang-et-al12online-opt,rakhlin-sridharan13neurips, syrgkanis2015fast,daskalakis2021near}. These impactful advances have been established for games where strategies lie in the standard simplex \citep{daskalakis2021near,piliouras-sim-skoulakis22} and extended to general convex subsets of the Euclidean space \citep{syrgkanis2015fast,farina-et-al22} and polyhedral convex games \citep{farina-et-al22kernelized}. Recently, \citet{vasconcelos-et-al23} showed that improved rates can also be achieved in the case of zero-sum quantum games using several different optimistic matrix multiplicative weight (OMMWU) algorithms, some of which can be seen as a particular case of our \eqref{OSCMWU} algorithm instantiated with the PSD cone.  Our analysis in this work is different as it relies on the OFTRL framework rather than the optimistic mirror prox viewpoint and holds for arbitrary symmetric cones.\\

\noindent\textbf{Symmetric cone programming using SCMWU.} Recent works \citep{zheng-et-al24primal-dual,zheng-tan24smallest-intersecting-balls}
 used the Symmetric Cone Multiplicative Weights Update (SCMWU) to solve specific Symmetric Cone Programs (SCPs) via binary search, where in each step of the binary search a min-max problem is solved approximately in order to solve the approximate feasibility problem (of the intersection of the feasible set and the sublevel set of the objective function).  In the specific problems tackled by both of those works, the strategy space of the min-max game is the trace-one slice of a symmetric cone on one side (where the strategy is updated using SCMWU) and a convex set on the other (where the strategy is played by an oracle). Thus, while their method for solving SCPs uses SCMWU in a game-playing framework, it is not easily amenable to the speedup that optimistic methods enjoy due to there being no stability guarantees for the iterates in the convex set (played by an oracle). They are, however, able to handle different sets of problems since additional constraints (that are not just trace constraints) can be handled by the oracle. Thus, while their work is related, we regard their method and applications as parallel to those we present in this work.\\

\noindent\textbf{Applications of symmetric cone games.} 
Some of our applications have been extensively studied in the semidefinite programming (SDP) and second-order cone programming (SOCP) literatures. For instance, problem~\eqref{eq:simplex-spectraplex-game} has been studied in the SDP literature with a special emphasis on designing efficient first-order methods for large scale problems using different randomization techniques \citep{baes-burgisser-nemirovski13randomized-mirror-prox,yurtsever-et-al21scalable-sdp,daspremont-elkaroui14stochastic-smoothing-sdp,carmon-duchi-sidford-tian19}. Sum-of-norms problems have been discussed for instance in \citet{alizadeh-goldfarb03socp} (see section 2.2). Other norm minimization problems such as maximum norm minimization also fit our symmetric cone game setting and we can use \eqref{OSCMWU} similarly.  
Nevertheless, our symmetric cone game setting captures arbitrary cone examples beyond the special cases of the simplex-spectraplex or second-order cone games we consider for illustrating the expressiveness of the class of symmetric cone games. 
Besides the applications we consider, \citet{jambulapati-et-al24box-simplex} have considered box-simplex games as well as box-spectraplex games and proposed faster first-order algorithms relying on area convexity \citep{sherman17area-convexity}.  

\section{Background on Symmetric Cones and EJAs}
\label{apdx:background}

\subsection{Definition of symmetric cones}

\begin{definition}{(Symmetric cone)} 
Let $(V, \psi(\cdot, \cdot))$ be an inner product space. A cone $\mathcal{K} \subset V$ is \textit{symmetric} if: 
\begin{itemize}
\item $\mathcal{K}$ is \textit{self-dual}, i.e. $\mathcal{K}^* = \mathcal{K}$ where~$\mathcal{K}^* := \{ y \in V: \psi(y,x) \geq 0, \forall x \in \mathcal{K}\}$ is the dual cone.  
\item $\mathcal{K}$ is \textit{homogeneous}, i.e. for any $u, v \in int(\mathcal{K})$, there exists an invertible linear map~$T$ s.t. $T(u) = v$ and $T(\mathcal{K}) = \mathcal{K}\,.$  
\end{itemize}
\end{definition}

\subsection{Rank of an EJA}
\label{appsec:rank}

The minimal polynomial of an element $x \in \calJ$ is the monic polynomial $p \in \R[x]$ of minimum degree for which $p(x) = 0$, and the degree of an element $x\in \calJ$ is the degree of its minimal polynomial. The rank of a Jordan algebra is the maximum degree of its elements.

\subsection{Examples of symmetric cones and their corresponding EJAs}
\label{appsec:SymCone_examples}

\begin{itemize}[leftmargin=*]
	\item \textbf{The nonnegative orthant $\R_{+}^n$.} The corresponding EJA is $\R^n$ equipped with Jordan product $x \circ y = (x_1 y_1, \ldots, x_n y_n)$, and the identity element is the all-ones vector $e = 1_n$. The rank of this EJA is $n$, and the spectral decomposition of $x \in \R^n$ is $x = \sum_{i} x_i e_i$ where $e_i$ is the standard $i$-th basis vector. The trace of $x \in \R^n$ is then the sum of its components, i.e., $\tr(x) = \sum_i x_i$, and the canonical EJA inner product is the Euclidean inner product $\inp{x}{y} = \tr(x \circ y) = \sum_i x_i y_i$. 
    
	\item \textbf{The second-order cone $\soc{n}$ (also known as the Lorentz cone).} This is the set $\{ x = (x_1, \bar{x}) \in \mathbb{R} \times \mathbb{R}^{n-1}: \|\bar{x}\|_2 \leq x_1 \}$, and its corresponding EJA is $\spin{d}$, i.e., the set $\R^n$ equipped with the Jordan product $x \circ y = (\bar{x}^\top \bar{y}, x_1 \bar{y} + y_1 \bar{x})$  where $x = (x_1, \bar x), y = (y_1, \bar y) \in \mathbb{R} \times \mathbb{R}^{n-1}$ and we use $\bar{x}^\top\bar{y}$ to denote $\sum_{i=1}^{n-1} \bar{x}_i \bar{y}_i$. The identity element is $e = (1, 0_{n-1})$. The rank of this EJA is 2, and the spectral decomposition of a point~$(s,x) \in \spin{n}$ is
\begin{equation}
\label{eq:spin_spectraldecomp}
(s,x)=\left(s+\|x\|_2\right) q_{+}+\left(s-\|x\|_2\right) q_{-}\,, 
\end{equation}
where $q_{\pm}=\frac{1}{2}\left(1, \pm \frac{x}{\|x\|_2}\right) \,.$
The trace is given by $\tr((s, x))=2 s$ and the canonical EJA inner product is  twice the Euclidean inner product, i.e., 
\begin{equation*}
\inp{x}{y} = 2x^\top y.
\end{equation*}
    (These are the conventions we use in this work; it should be noted that some sources choose to scale the Jordan product by a factor of $\frac{1}{\sqrt{2}}$ to remove the factor of 2 from the expressions of the trace and the EJA inner product.)
	\item \textbf{The PSD cone $\psd{n}$}, i.e., the set of $n \times n$ symmetric positive semidefinite matrices with real entries. The corresponding EJA is $\sym{n}$, the set of $n \times n$ symmetric matrices with real entries, equipped with the Jordan product
    \begin{equation}
   \label{eq:JP_mat}
   	X \circ Y = \frac{XY+YX}{2},
   \end{equation}
   i.e. the symmetrized matrix product.
    The identity element is the identity matrix $e = I_n$. The rank of this EJA is $n$, the spectral decomposition is the usual spectral decomposition for symmetric matrices, and the trace is the usual matrix trace. The EJA inner product is the Hilbert-Schmidt inner product $\inp{X}{Y} = \tr(X \circ Y) = \tr(X Y)$.
	\item \textbf{The set $\mathbb{H}_{+}^n$ of $n \times n$ Hermitian PSD matrices with complex entries.} The corresponding EJA is the set of $n \times n$ Hermitian matrices with complex entries, equipped with the Jordan product \eqref{eq:JP_mat}. The relevant properties are the same as for $\psd{n}$ above.
	\item \textbf{Product cones.} Any Cartesian product of the above symmetric cones is also a symmetric cone, and the corresponding EJA is the product of their corresponding EJAs. The identity element of this EJA is the product of identity elements, and the spectral decomposition of an element is the product of the spectral decompositions of its components. The rank/trace of an element is simply the sum of the ranks/traces of its components.
\end{itemize}

\begin{table*}[ht]
  \caption{Examples of Euclidean Jordan Algebras and the corresponding Symmetric Cones}
  \medskip
  \label{table-EJA-examples}
  \centering
  \begin{threeparttable}
  \begin{tabular}{lcccccc}
    \toprule
     \makecell{EJA $\calJ$}  & \makecell{Inner product\\ $\langle x, y \rangle$}  & \makecell{Jordan product\\ $x \circ y$}& \makecell{Cone of squares $\calK$} \\
    \midrule
    \makecell{Euclidean space} ($\mathbb{R}^n$) & $\sum_{i=1}^n x_i y_i$  & $(x_i y_i)_{i=1, \cdots, n}$\tnote{\#} & \makecell{nonnegative orthant ($\mathbb{R}_+^n$)}   \\[0.5cm]
    \makecell{Real sym. matrices} ($\mathbb{S}^n$) & $\text{tr}(xy)$ & $\frac 12 (xy + yx)$ \tnote{$\dagger$} & \makecell{PSD cone\tnote{1}} ($\mathbb{S}_{+}^n$) \\[0.5cm]
    \makecell{Jordan spin algebra\tnote{2}} $\;$($\spin{n}$) & $2 \sum_{i=1}^n x_i y_i$ & $(\bar{x}^\top \bar{y}, x_1 \bar{y} + y_1 \bar{x}) \tnote{\&}$ & \makecell{second-order cone\tnote{3}}   ($\mathbb{L}_{+}^n$) \\
    \bottomrule
  \end{tabular}
  \begin{tablenotes}\footnotesize
  \item[1] This is the set of real symmetric positive semidefinite matrices.  \tnote{2} This is the set $\R \times \R^{n-1}$. \tnote{3} This cone is the set~$\mathbb{L}_{+}^n = \{ x = (x_1, \bar{x}) \in \mathbb{R} \times \mathbb{R}^{n-1}: \|\bar{x}\| \leq x_1 \}\,,$ and is also known as the Lorentz cone. \tnote{\#} This is the coordinatewise product vector, a.k.a. the Hadamard or Schur product. \tnote{$\dagger$} The product used here is the standard matrix product. \tnote{\&} Here $x = (x_1, \bar x), y = (y_1, \bar y) \in \mathbb{R} \times \mathbb{R}^{n-1}\,.$ We use $\bar{x}^\top\bar{y}$ to denote $\sum_{i=1}^{n-1} \bar{x}_i \bar{y}_i$. 
  \end{tablenotes}
  \end{threeparttable}
\end{table*}

\subsection{Exponential computation in \eqref{OSCMWU}}
\label{apdx:exponential-computation}

In this section, we briefly discuss the computation of the exponential of an element~$x \in \mathcal{J}$ where $J$ is an EJA. We provide comments on a case by case basis. \\

\noindent\textbf{Euclidean space~$\R^n$.} In this case, the exponential computation is straightforward and consists in computing the exponential of each one of the coordinates, i.e. for a vector~$x \in \R^n,$ $\exp(x) = (\exp(x_i))_{i=1,\cdots,n} \in \R^d\,.$\\

\noindent\textbf{Jordan spin algebra~$\mathbb{L}^n$.} 
Given the spectral decomposition \eqref{eq:spin_spectraldecomp} which is easy to compute in time linear in the dimension~$n$, we obtain 
\begin{equation}
\exp ((s,x))=\exp \left(s+\|x\|_2\right) q_{+}+\exp \left(s-\|x\|_2\right) q_{-}\,.
\end{equation}
Therefore the computation of the exponential is straightforward.\\

\noindent\textbf{Real symmetric matrices~$\mathbb{S}^n$.} In this case, the computation of the matrix exponential is more expensive and can be performed (approximately) in as much as $\tilde{\mathcal{O}}(n^3)$ time for a $n\times n$ matrix (the notation~$\tilde{\mathcal{O}}(\cdot)$ hides polylogarithmic factors in $n$ and in $1/\epsilon$ where $\epsilon$ refers to the accuracy of the solution) by computing an eigenvalue decomposition of the symmetric matrix and exponentiating the eigenvalues. We refer the reader to section 7 in \citet{arora-kale16semidefinite-progs} for more details and potential improvements for faster matrix exponentiation.\\ 

\noindent\textbf{Product EJAs.} In this more general setting, the exponential can be computed separately for each one of the EJAs. 

\subsection{Trace-p norms}
\label{appsec:tracep}

\begin{theorem}
    The trace-$\infty$ norm $\normtrinf{x} = \max_j \abs{\lambda_j(x)}$ is the dual norm of the trace-1 norm $\normtr{x} = \sum_j \abs{\lambda_j}$.
\end{theorem}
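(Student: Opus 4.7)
The plan is to establish the duality of $\|\cdot\|_{\tr,1}$ and $\|\cdot\|_{\tr,\infty}$ directly from the EJA spectral decomposition, proving both $\sup_{\|u\|_{\tr,1} \leq 1} \langle u, v \rangle \leq \|v\|_{\tr,\infty}$ and the reverse inequality via an explicit witness. The approach mirrors the familiar argument for the simplex ($\ell_1$--$\ell_\infty$ duality) and for the spectraplex (nuclear/operator norm duality), generalized through the machinery of Jordan frames reviewed in Section~\ref{sec:background}.

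For the upper bound, I would show that $|\langle u, v \rangle| \leq \|u\|_{\tr,1}\,\|v\|_{\tr,\infty}$ for all $u, v \in \calJ$. The cleanest route invokes the EJA analogue of von Neumann's trace inequality, which asserts that $|\tr(u \circ v)| \leq \sum_{i=1}^r \lambda_i^{\downarrow}(|u|)\,\lambda_i^{\downarrow}(|v|)$, where $|x|$ denotes the absolute value in the EJA (obtained by applying the absolute value to each eigenvalue in the spectral decomposition) and $\lambda^{\downarrow}$ denotes eigenvalues in decreasing order; this inequality is established in \citet{gowda2019holder}. A scalar H\"older step then yields
\begin{equation*}
\sum_{i=1}^r \lambda_i^{\downarrow}(|u|)\,\lambda_i^{\downarrow}(|v|) \;\leq\; \Bigl(\sum_{i=1}^r \lambda_i^{\downarrow}(|u|)\Bigr)\Bigl(\max_{1\leq i\leq r} \lambda_i^{\downarrow}(|v|)\Bigr) \;=\; \|u\|_{\tr,1}\,\|v\|_{\tr,\infty},
\end{equation*}
so the supremum over $\{u : \|u\|_{\tr,1} \leq 1\}$ is at most $\|v\|_{\tr,\infty}$.

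For the matching lower bound, I would exhibit an explicit witness $u^\star$ with $\|u^\star\|_{\tr,1} \leq 1$ achieving $\langle u^\star, v \rangle = \|v\|_{\tr,\infty}$. Fix a spectral decomposition $v = \sum_{i=1}^r \lambda_i\, q_i$ of $v$ in a Jordan frame $\{q_1, \ldots, q_r\}$, and pick $i^\star$ with $|\lambda_{i^\star}| = \max_i |\lambda_i| = \|v\|_{\tr,\infty}$. Take $u^\star := \mathrm{sign}(\lambda_{i^\star})\, q_{i^\star}$. Since $q_{i^\star}$ is a primitive idempotent of the frame, its spectral decomposition in that same frame shows that its eigenvalues are $(1, 0, \ldots, 0)$, so $\|u^\star\|_{\tr,1} = 1$. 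Using the frame identities $q_i \circ q_j = \delta_{ij} q_i$ and $\tr(q_i) = 1$, I compute
\begin{equation*}
\langle u^\star, v\rangle \;=\; \tr(u^\star \circ v) \;=\; \mathrm{sign}(\lambda_{i^\star}) \sum_{j=1}^r \lambda_j \,\tr(q_{i^\star} \circ q_j) \;=\; \mathrm{sign}(\lambda_{i^\star})\, \lambda_{i^\star} \;=\; \|v\|_{\tr,\infty},
\end{equation*}
which combined with the upper bound yields $\sup_{\|u\|_{\tr,1}\leq 1} \langle u, v\rangle = \|v\|_{\tr,\infty}$, i.e.\ the claimed duality.

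The main obstacle is the EJA trace inequality $|\langle u, v \rangle| \leq \sum_i \lambda_i^{\downarrow}(|u|)\,\lambda_i^{\downarrow}(|v|)$, since $u$ and $v$ need not share a common Jordan frame and hence $\tr(u \circ v)$ does not factor directly through matched eigenvalue coordinates. We may either cite it from \citet{gowda2019holder} or prove it from first principles via a majorization argument based on the action of EJA automorphisms on Jordan frames (a tool we already exploit in the proof of Theorem~\ref{thm:sc-scne}). The witness construction and the final H\"older step are routine once the Jordan frame formalism is in place.
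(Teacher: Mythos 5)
Your proof is correct, and your lower-bound (witness) step is essentially identical to the paper's $(\geq)$ direction: both take $\sign(\lambda_{k})\,q_{k}$ for a maximizing eigenvalue index $k$ and use the Jordan-frame identities to evaluate the pairing. Where you genuinely diverge is the upper bound. You route it through the EJA analogue of the von Neumann/Fan--Theobald trace inequality $\abs{\langle u,v\rangle} \leq \sum_i \lambda_i^{\downarrow}(\abs{u})\,\lambda_i^{\downarrow}(\abs{v})$ followed by a scalar H\"older step; this works, but it imports a nontrivial result (you correctly flag it as the main obstacle), and you would need to either cite it precisely from \citet{gowda2019holder} or supply the majorization argument. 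The paper avoids this machinery entirely: it observes the generalized inequality $-\normtrinf{x}\,e \preceq_{\calK} x \preceq_{\calK} \normtrinf{x}\,e$, uses self-duality of $\calK$ to pair this sandwich with any $v \in \DK$ (giving $\abs{\inp{v}{x}} \leq \normtrinf{x}$ since $\inp{v}{e}=\tr(v)=1$), and then reduces the supremum over the full trace-one unit ball to the supremum over $\DK$ via Lemma~\ref{lem:abslin}, which writes any unit-trace-norm element as a convex combination of signed primitive idempotents. The paper's route is more elementary and self-contained (it needs only self-duality and the spectral decomposition), while yours is shorter given the trace inequality as a black box and has the advantage of extending verbatim to the general trace-$p$/trace-$q$ duality rather than just the $(1,\infty)$ pair.
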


\begin{proof}
    Recall that the dual norm $\normtrd{x} = \sup_{\normtr{v} =1} \abs{\inp{v}{x}}$. We shall prove separately that $\normtrd{x} \geq \normtrinf{x}$ and $\normtrd{x} \leq \normtrinf{x}$.

    ($\geq$) For a given $x \in \calJ$ with spectral decomposition $\sum_j \lambda_j q_j$, let $k \in \argmax_j \abs{\lambda_j}$ so that $\normtrinf{x} = \abs{\lambda_k}$ and set $\tilde{v} := \sign(\lambda_k)q_k\,.$ 
    Since $\tilde{v},x$ share the Jordan frame $\{q_j\}$, we have 
    \[
        \inp{\tilde{v}}{x}
        =
        \tr(\tilde{v} \circ x)
        =
        \sign(\lambda_k) \lambda_k
        =
        \abs{\lambda_k}
        =
        \normtrinf{x}.
    \]
    Since $\normtr{\tilde{v}} = 1$, we then have  $\normtrd{x} = \sup_{\normtr{v} = 1} \abs{\inp{v}{x}} \geq \normtrinf{x}$.

    ($\leq$) For a given $x \in \calJ$ we have  $\normtrinf{x} = \max_j \abs{\lambda_j(x)}$, thus 
    \begin{equation}
    \label{ineq:trinf}
        - \normtrinf{x} e
        \preceq_\calK 
        x
        \preceq_\calK 
        \normtrinf{x} e\,, 
    \end{equation}
    where we use the generalized inequality $x \preceq_\calK y$ for $x, y \in \calJ$ to denote that $y- x \in \calK$. Then, by self-duality of the symmetric cone, we can take the inner product of the inequality chain \eqref{ineq:trinf} with any $v \in \DK$ to give\footnote{We use the fact that $\inp{v}{e} = \tr(v \circ e) = \tr(v) = 1$ for all $v \in \DK$.}
    \[
        - \normtrinf{x}
        \leq 
        \inp{v}{x} 
        \leq 
        \normtrinf{x}\,, 
        \qquad
        \forall v \in \DK,
    \]
    i.e., $\sup_{v \in \DK} \abs{\inp{v}{x}}
        \leq 
        \normtrinf{x}.$
    Finally, by Lemma \ref{lem:abslin} we have 
    \[
        \normtrd{x}
        =
        \sup_{\normtr{v} = 1} \abs{\inp{v}{x}}
        =
        \sup_{v \in \DK} \abs{\inp{v}{x}}
        \leq 
        \normtrinf{x}.
    \]
\end{proof}

\subsection{Bregman divergence of the symmetric cone negative entropy}
\label{appsec:bregman}

Let $\Phi: C \to \mathbb{R}$ be a continuously differentiable and strictly convex function defined on the convex set~$C\,.$ The Bregman divergence~$D_{\Phi}(x,y)$ associated with~$\Phi$ for the points~$x, y \in C$ is the difference between the value of~$\Phi$ at $x$ and the first-order Taylor expansion of~$F$ around~$y$ evaluated at~$y$, i.e., $D_{\Phi}(x,y) = \Phi(x) - \Phi(y) - \langle \nabla \Phi(y), x-y\rangle\,.$ The Bregman divergence associated to the SCNE~$\Phi_{\text{ent}}$ defined above is given by $D_{\Phi_{\text{ent}}}(x,y) = \tr(x \circ \ln x - x \circ \ln y + y - x)$ for every~$x, y \in \text{int}(\mathcal{K})\,.$

\section{Proofs for Section~\ref{sec:oscmwu}}
\label{apdx:proofs-sec-oscmwu}

\subsection{Proof of Proposition~\ref{prop:oscmwu-as-oftrl}}

The proof follows similar lines as Lemma 4.1 in \citet{canyakmaz-et-al23} showing the equivalence between (non-optimistic) SCMWU and FTRL. 

\subsection{Proof of Proposition~\ref{prop:rvu-bound-oscmwu}}

This result echoes Proposition 7 in \citet{syrgkanis2015fast} which relies on the combination of Theorem~19 (a regret bound) and Lemma~20 (a stability result) therein. Both these results hold in our setting as the proof follows the exact same lines upon replacing the inner product by the EJA inner product, the pair of dual norms~$(\|\cdot\|, \|\cdot\|_{*})$ by dual norms defined on the EJA and the simplex~$\Delta(\mathbb{S}_i)$ (where $\mathbb{S}_i$ is a finite strategy space for player $i$ in their context) by the generalized simplex~$\Delta_{\mathcal{K}}\,.$ We do not reproduce the proof here for conciseness. 

\subsection{Proof of Theorem~\ref{thm:sc-scne}: Strong convexity of the SCNE} 
\label{sec:proof-thm-sc-scne}

We provide a detailed proof. The reader familiar with EJAs might directly check Eq.~\eqref{eq:heart-proof-sc} and the proof of the data processing inequality for the diagonal mapping (Theorem~\ref{thm:proj-ineq}). 
Let $\mathcal{K}$ be the cone of squares associated to an EJA~$\mathcal{J}$ and denote by~$r$ its rank. Let $x,y \in \mathcal{K}$ s.t. $tr(x) = tr(y) = 1\,.$ There exists a Jordan frame~$\{q_i\}_{1 \leq i \leq r}$ and scalars~$\{\lambda_i\}_{1 \leq i \leq r}$ s.t. $x-y = \sum_{i=1}^r \lambda_i q_i\,.$ We further suppose without loss of generality that $\|q_i\| = 1$ so that the Jordan frame is an orthonormal basis of a subspace of the EJA~$\mathcal{J}\,.$ We will use the canonical inner product provided by the trace throughout this proof.

Consider the following mapping: 
\begin{align}
T: \quad & \mathcal{J} \to \mathcal{J} \nonumber\\
   &  z \mapsto \sum_{i=1}^r \langle z, q_i \rangle q_i\,.
\end{align}

We first observe that~$T(int(\mathcal{K})) \subseteq int(\mathcal{K})$ and $tr(T(x)) = tr(T(y)) = 1\,.$ In other words, the mapping~$T$ is trace preserving and maps the interior of the cone to a subset of the same cone. In particular, this ensures that the Bregman divergence between the points $T(x)$ and $T(y)$ is well-defined as this will be useful in the rest of the proof. We provide a brief proof of the simple aforementioned facts:  
\begin{itemize}[leftmargin=*]
\item Let $x \in int(\mathcal{K})$, we prove that $T(x) \in int(\mathcal{K})\,.$ Since $x \in \mathcal{K} = \mathcal{K}^*$ (by definition of a symmetric cone), it follows that for any $z \in \mathcal{K}, \langle x, z \rangle \geq 0\,.$ Moreover, for every $1 \leq i \leq r, q_i = q_i^2 \in \mathcal{K}\,.$ Therefore $\langle x,  q_i \rangle > 0$ and the inequality is strict since~$x \in int(\mathcal{K}) = int(\mathcal{K}^*)\,.$  

\item Observe that $tr(T(x)) = \sum_{i=1}^r \langle x, q_i \rangle \, tr \left( q_i \right) = \sum_{i=1}^r \langle x, q_i \rangle = \langle x, \sum_{i=1}^r q_i \rangle = \langle x, e \rangle = tr(x) = 1\,,$ where we have used the linearity of the trace, the definition of a Jordan frame and the canonical inner product ($\langle \cdot, \cdot \rangle = tr(\cdot \circ \cdot))$.  
\end{itemize}

We also define the mapping: 
\begin{align}
u: \quad & \mathcal{J} \to \mathbb{R}^r \nonumber\\
       &  z \mapsto (\langle z, q_i \rangle)_{1 \leq i \leq r}\,.
\end{align}

We now have the following set of inequalities: 
\begin{equation}
\label{eq:heart-proof-sc}
D_{\Phi}(x,y) \underset{(a)}{\geq} D_{\Phi}(T(x), T(y)) 
               \underset{(b)}{=} \text{KL}(u(x)|| u(y))\\
              \underset{(c)}{\geq} \frac 12 \|u(x) - u(y)\|_1^2
             \underset{(d)}{=} \frac 12 \|x - y\|_{\tr,1}^2\,,
\end{equation}
where 
\noindent (a) follows from the projection inequality (see Theorem~\ref{thm:proj-ineq} below for a statement). 
\medskip
\noindent (b) stems from writing $D_{\Phi}(T(x), T(y)) = D_{\Phi}(\sum_{i=1}^r \langle x, q_i\rangle q_i, \sum_{i=1}^r \langle y, q_i\rangle q_i)$ and the spectral definition of the Bregman divergence in terms of the eigenvalues of $x$ and $y$ in the Jordan frame~$\{q_i\}\,,$
\medskip
\noindent (c) is an application of Pinsker's inequality in the Euclidean space~$\mathbb{R}^r$ using the definition of the mapping~$u$, \medskip
\noindent (d) follows from using the definition of the linear mapping $u$ and writing: 
\begin{equation}
\|u(x) - u(y)\|_1 = \|u(x-y)\|_1 = \sum_{i=1}^r |\langle x - y, q_i \rangle|\\
= \|T(x-y)\|_{\tr, 1} = \|x-y\|_{\tr, 1}\,,
\end{equation}
where the last inequality follows from observing that 
\begin{equation}
T(x-y) = \sum_{i=1}^r \langle q_i, x - y \rangle q_i = \sum_{i=1}^r \langle q_i, \sum_{j=1}^r \lambda_j q_j \rangle q_i\\ 
= \sum_{i=1}^r \lambda_i \langle q_i, q_i \rangle q_i = \sum_{i=1}^r \lambda_i q_i = x-y\,.
\end{equation}
Note that we have used here the decomposition of $x-y$, the definition of a Jordan frame and the fact that the Jordan frame consists of normalized idempotents.
Note also that the above identity~$T(x-y) = x-y$ is precisely why we have considered the Jordan frame associated with $x-y$ from the beginning of the proof.

\begin{theorem}[Projection inequality]
\label{thm:proj-ineq}
Let $J$ be a simple rank-$r$ EJA and let $\mathcal{K}$ be its cone of squares. Let $\{q_i\}_{i=1}^r$ be a Jordan frame. Define 
\begin{align}
T: \quad & \mathcal{J} \to \mathcal{J} \nonumber\\
   &  z \mapsto \sum_{i=1}^r \langle z, q_i \rangle q_i\,.
\end{align}
Then for every~$x, y \in \text{int}(\mathcal{K}),$
\begin{equation}
D_{\Phi}(T(x), T(y)) \leq D_{\Phi}(x,y)\,.
\end{equation}
\end{theorem}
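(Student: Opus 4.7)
The plan is to prove the projection inequality $D_{\Phi_{\text{ent}}}(T(x), T(y)) \leq D_{\Phi_{\text{ent}}}(x,y)$ by exhibiting $T$ as a uniform convex combination of Jordan algebra automorphisms and then invoking the joint convexity of the symmetric cone relative entropy established in \citet{faybusovich16lieb-cvx-ejas}. This mirrors the well-known matrix identity that the ``pinching'' map sending a symmetric matrix to its diagonal part is an average of conjugations by sign-diagonal unitaries; in the EJA setting the conjugations are replaced by automorphisms coming from the quadratic representation.

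To each sign pattern $\sigma \in \{-1,+1\}^r$ I associate the element $s_\sigma := \sum_{i=1}^r \sigma_i q_i \in \calJ$. Using the Jordan-frame orthogonality relations, $s_\sigma \circ s_\sigma = \sum_i \sigma_i^2 q_i = e$, so $s_\sigma$ is a square root of the identity. A standard EJA fact (\citet{faraut-koranyi94sym-cones}, Propositions II.3.3 and III.5.2) then gives that the quadratic representation $A_\sigma := Q(s_\sigma) = 2L(s_\sigma)^2 - L(s_\sigma^2)$ is a Jordan algebra automorphism: it preserves the Jordan product, the identity, the trace, and commutes with every L\"owner extension, in particular with $\ln$. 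Substituting these invariances into the explicit formula for the Bregman divergence immediately yields
\[
D_{\Phi_{\text{ent}}}(A_\sigma x,\, A_\sigma y) = D_{\Phi_{\text{ent}}}(x, y),\qquad \forall\, x, y \in \text{int}(\calK),\ \forall\, \sigma \in \{-1,+1\}^r.
\]

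The crucial step is the algebraic identity
\[
T \;=\; 2^{-r} \sum_{\sigma \in \{-1,+1\}^r} A_\sigma,
\]
which I would verify on the Peirce decomposition $\calJ = \bigoplus_{i \leq j} \calJ_{ij}$ adapted to the frame $\{q_i\}_{i=1}^r$. On the diagonal blocks $\calJ_{ii} = \mathbb{R}\, q_i$, the relation $s_\sigma \circ q_i = \sigma_i q_i$ together with $\sigma_i^2 = 1$ gives $A_\sigma(q_i) = q_i$ for every $\sigma$, which matches $T(q_i) = q_i$. On an off-diagonal block $\calJ_{ij}$ with $i < j$, the Peirce relations $q_k \circ z = 0$ for $k \notin \{i,j\}$ and $q_i \circ z = q_j \circ z = z/2$ yield $A_\sigma(z) = \bigl((\sigma_i + \sigma_j)^2/2 - 1\bigr) z$, which equals $+z$ when $\sigma_i = \sigma_j$ and $-z$ otherwise, so averaging over $\sigma$ cancels exactly. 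On the other hand $T(z) = 0$ on such blocks, since off-diagonal Peirce components are trace-free (a standard Peirce property, see \citet{faraut-koranyi94sym-cones}, Chapter IV), hence $\langle z, q_k \rangle = 0$ for all $k$.

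With these two ingredients in place, the inequality falls out from one application of the joint convexity of $D_{\Phi_{\text{ent}}}$:
\[
D_{\Phi_{\text{ent}}}(T(x), T(y)) \;=\; D_{\Phi_{\text{ent}}}\!\Bigl(2^{-r}\textstyle\sum_\sigma A_\sigma(x),\, 2^{-r}\sum_\sigma A_\sigma(y)\Bigr) \;\leq\; 2^{-r}\sum_\sigma D_{\Phi_{\text{ent}}}(A_\sigma(x), A_\sigma(y)) \;=\; D_{\Phi_{\text{ent}}}(x, y).
\]
The main obstacle I anticipate is the algebraic identity $T = 2^{-r}\sum_\sigma A_\sigma$ in general EJAs: carrying out the Peirce block calculations and confirming the automorphism property of $Q(s_\sigma)$ without tacitly relying on the matrix model takes some care, particularly in checking that off-diagonal Peirce components are trace-free. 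Once that identity is established, automorphism invariance of the SCNE is a one-line verification and joint convexity of the EJA relative entropy plugs in as a black box.
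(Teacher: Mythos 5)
Your proposal is correct and rests on the same two pillars as the paper's proof: (1) realize the diagonal map $T$ as a convex combination of Jordan-algebra automorphisms of the form $P_\omega$ with $\omega^2 = e$ (so that each is an automorphism by Faraut--Kor\'anyi, Prop.~II.4.4), and (2) combine the joint convexity of the EJA relative entropy \citep{faybusovich16lieb-cvx-ejas} with its invariance under automorphisms (via $\ln\phi(x)=\phi(\ln x)$ and trace preservation). Where you differ is in how the convex combination is produced. The paper builds $T=\mathcal{C}_2$ recursively as a composition of $r-1$ two-term pinchings $\mathcal{C}_{k-1}=\tfrac12(\mathcal{C}_k+P_{\omega_{k-1}}\mathcal{C}_k)$ with $\omega_j=e-2q_j$, and proves by induction on the Peirce decomposition that each step annihilates one more column of off-diagonal blocks. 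You instead write $T=2^{-r}\sum_{\sigma\in\{\pm1\}^r}P_{s_\sigma}$ with $s_\sigma=\sum_i\sigma_i q_i$ and verify the identity blockwise in one shot: your computations $P_{s_\sigma}(q_i)=q_i$ and $P_{s_\sigma}(z)=\sigma_i\sigma_j z$ for $z\in\calJ_{ij}$ are correct, the average $2^{-r}\sum_\sigma\sigma_i\sigma_j=0$ for $i\neq j$ does the cancellation, and $T(z)=0$ on off-diagonal blocks follows from the orthogonality of the Peirce decomposition (or from $\langle z,q_i\rangle=\tr((z\circ q_i)\circ q_i)=\tfrac12\langle z,q_i\rangle$ by associativity of the trace form). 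Your version is arguably cleaner and more symmetric --- a single explicit formula with a one-line blockwise verification replaces the paper's induction --- at the cost of an exponential number of terms in the decomposition, which is immaterial since the decomposition is only used as an existence statement feeding into joint convexity. Neither argument actually uses simplicity of the EJA. The one point to make explicit if you write this up is that $T$ coincides with the Peirce-diagonal map, i.e.\ $\langle z,q_i\rangle=z_i$ for the Peirce coefficients, which uses $\tr(q_i)=1$ for primitive idempotents and the orthogonality of the Peirce sum; with that in place the blockwise comparison of $T$ and $2^{-r}\sum_\sigma P_{s_\sigma}$ is a complete proof of the identity.
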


\begin{proof}
The idea of the proof is to write the diagonal operator~$T$ as a convex combination of automorphisms of the Jordan algebra and then use the joint convexity of the relative entropy function shown in Corollary 3.4 p.~8 in \citet{faybusovich16lieb-cvx-ejas} to obtain the desired inequality. Before providing the proof for EJAs, we provide first some motivation for the proof based on a proof of the special case of the EJA of real symmetric matrices. The reader familiar with such a proof might skip item (i).\\ 

\noindent\textbf{(i) Motivation and intuition.} Our proof is inspired from some known observations made in Problems II.5.4-II.5.5 in \citet{bhatia13book-matrix-analysis}. Consider the following $3\times 3$ block matrices
\begin{equation}
A = \left(\begin{matrix} A_{11} & A_{12} & A_{13}  \\ A_{21} & A_{22} & A_{23} \\ A_{31} & A_{32} & A_{33} \end{matrix} \right)\,,\\
\quad U_1 = \left(\begin{matrix} I & 0 & 0 \\ 0 &I & 0 \\ 0 & 0 & -I \end{matrix} \right)\,,
\quad U_2 = \left(\begin{matrix} I & 0 & 0 \\ 0 &- I & 0 \\ 0 & 0 & I \end{matrix}\right)\,, 
\end{equation}
where $I$ are identity matrices with dimensions compatible with the blocks of the matrix~$A$.
It follows from simple computations that 
\begin{equation}
\label{eq:C3-A}
\mathcal{C}_3(A) := \frac{1}{2}( A + U_1 A U_1) = \left(\begin{matrix} A_{11} & A_{12} & 0  \\ A_{21} & A_{22} & 0 \\ 0 & 0 & A_{33} \end{matrix} \right)\,.
\end{equation}
Similarly, as if we apply the same treatment to the $2 \times 2$ non-zero block of the matrix $A$, 
\begin{equation}
\label{eq:C2-A} 
\mathcal{C}_2(A) := \frac{1}{2}( \mathcal{C}_3(A) + U_2 \,\mathcal{C}_3(A) \,U_2) = \left(\begin{matrix} A_{11} & 0 & 0  \\ 0 & A_{22} & 0 \\ 0 & 0 & A_{33} \end{matrix} \right)\,.
\end{equation}
We have obtained the diagonal block of $A$ from the initial matrix $A$ by applying so-called pinching operations. In particular, observe that $U_1^2 = U_2^2 = I$. This implies that $\mathcal{C}_3$ is an average of 2 automorphisms and so is $\mathcal{C}_2$ by composition of automorphisms. By combining~\eqref{eq:C3-A} and~\eqref{eq:C2-A}, we can simply write the diagonal blocks of $A$ as a convex combination of automorphisms (of the EJA of symmetric matrices here). We will use the same strategy in the more general setting of EJAs in what follows. This generalization is partly inspired from Example 9 in \citet{gowda17positive-maps-ejas}, especially for considering the quadratic representation to generalize the operations above of the type~$U_1 A U_1$. However, this aforementioned example is not concerned with the diagonal mapping and does not address our question of interest.\\ 

\noindent\textbf{(ii) The diagonal mapping is a convex combination of EJA automorphisms.}\footnote{Note that this step of the proof does not require the fact that the EJA is simple.}  

First, we recall the Peirce decomposition theorem which will be crucial in our proof.  
\begin{theorem}[Peirce Decomposition in EJAs, e.g. \citet{faraut-koranyi94sym-cones}]
\label{thm:peirce-decomp}
Let~$\mathcal{J}$ be an EJA with rank~$r$ and $\{e_1, \cdots, e_r\}$ a Jordan frame of $\mathcal{J}\,.$ For $i, j \in \{1, \cdots, r\},$ we define the eigenspaces: 
\begin{align}
\mathcal{J}_{ii} &:= \{ x \in \mathcal{J}: x \circ e_i = x \} = \mathbb{R}\, e_i\,,\\
\mathcal{J}_{ij} &:= \{ x \in \mathcal{J}: x \circ e_i = \frac{1}{2}x = x \circ e_j\}, \quad i \neq j \label{eq:calJ_ij}\,. 
\end{align}
Then the space~$\mathcal{J}$ is the orthogonal direct sum of the subspaces~$\mathcal{J}_{ij}\, (i \leq j)\,.$ Furthermore,  
\begin{align}
\calJ_{ij} \circ \calJ_{ij} &\subseteq \calJ_{ii} + \calJ_{jj}\,,\\
\calJ_{ij} \circ \calJ_{jk} & \subseteq \calJ_{ik} \quad\text{if} \quad i \neq k \,,\\
\calJ_{ij} \circ \calJ_{kl} &= \{ 0 \} \quad\text{if} \quad \{i,j\} \cap \{k,l\} = \emptyset \label{eq:peirce-orthogonality}\,.
\end{align}
Then we can write any~$x \in \calJ$ as: 
\begin{equation}
x = \sum_{1 \leq i \leq j \leq r} x_{ij} = \sum_{i=1}^r x_i e_i + \sum_{1 \leq i < j \leq r} x_{ij}\,, 
\end{equation}
where~$x_i \in \mathbb{R}$ and $x_{ij} \in \calJ_{ij}\,.$ This is the so-called Peirce decomposition of $x \in \calJ$ associated with the Jordan frame~$\{e_1, \cdots, e_r\}\,.$ 
\end{theorem}

We now define for every $j \in \{2, \cdots, r\},$
\begin{equation}
\omega_{j} := e - 2 e_j = e_1 + \cdots + e_{j-1} - e_j + e_{j+1} + \cdots + e_r\,, 
\end{equation}
where $e$ is the unit element of the EJA, $e = \sum_{i=1}^r e_i\,.$ Observe that $\omega_j^2 = e$ for every $j \in \{1, \cdots, r\}\,.$ 
We further introduce the quadratic representation~$P$ of the EJA (see e.g. section II. 3 in \citet{faraut-koranyi94sym-cones}) which is the map defined for every $\omega \in \calJ$ by: 
\begin{equation}
P_{\omega} := 2\, L(\omega)^2 - L(\omega^2)\,, 
\end{equation}
where $L(\omega)$ is the linear map of $\calJ$ defined by $L(\omega) x = \omega \circ x\,.$ Using this definition, it follows that for every $x \in \mathcal{J},$
\begin{equation}
P_{\omega}(x) = 2\,\omega \circ (\omega \circ x) - \omega^2 \circ x\,.
\end{equation}
As for the connection with the case of the algebra of symmetric matrices, notice here that for an associative algebra and a Jordan product $x \circ y = \frac 12 (xy + yx),$ then $P_{\omega}(x) = \omega x \omega\,.$ 
We now recursively define a sequence of operators $(\mathcal{C}_k)_{2 \leq k \leq r+1}$ as follows: 
\begin{align}
\label{eq:def-Ck}
\mathcal{C}_{r+1}(x) &= x, \forall x \in \calJ\,,\\
\mathcal{C}_{k-1}(x) &= \frac{1}{2} (\mathcal{C}_k(x) + P_{\omega_{k-1}}(\mathcal{C}_k(x)))\,,\\ 
&\forall k \in \{3, \cdots r+1\}, \forall x \in \calJ \,.
\end{align}

The following proposition provides a closed form expression of the pinching operations defined above. In the case of the EJA of symmetric matrices, each pinching operation~$\mathcal{C}_k$ sets the last row and column coefficients of the symmetric matrix (except the one on the diagonal) to zero, see \eqref{eq:C3-A}-\eqref{eq:C2-A} for an illustration. 
\begin{proposition} 
\label{prop:C_r-l-formula}
$\forall r \geq 2, \forall l \in \{-1, 0,  \cdots, r-2 \}, \forall x \in \calJ, \, \mathcal{C}_{r-l}(x) = x - \sum_{k=r-l}^r \sum_{1 \leq i < k} x_{ik}\,.$ 
\end{proposition}

\begin{proof}
We prove this result by induction on~$l$. The base case follows from observing that for $l=-1$, we have $\mathcal{C}_{r-l}(x) = \mathcal{C}_{r+1}(x) = x$ for all $x \in \calJ$ by definition of~$\mathcal{C}_{r+1}$ in \eqref{eq:def-Ck}.  
Suppose now that the result holds for some $l \in \{-1, 0,  \cdots, r-3 \}$: 
\begin{equation}
\label{eq:C_r-l}
\mathcal{C}_{r-l}(x) = x - \sum_{k=r-l}^r \sum_{1 \leq i < k} x_{ik}\,.
\end{equation}
By the recursive definition of the operator~$\mathcal{C}_{r-(l+1)}$, we have for every $x \in \calJ,$
\begin{equation}
\label{eq:recursive-C_r_l+1}
\mathcal{C}_{r-(l+1)}(x) = \frac 12 \left( \mathcal{C}_{r-l}(x) + P_{\omega_{r-(l+1)}}(\mathcal{C}_{r-l}(x)) \right)\,, 
\end{equation}
where we recall that $\omega_{r-(l+1)} = e - 2 e_{r-(l+1)}\,.$ 
Since by definition, 
\begin{equation}
P_{\omega_{r-(l+1)}}(\mathcal{C}_{r-l}(x)) \nonumber\\
= 2 \omega_{r-(l+1)} \circ (\omega_{r-(l+1)} \circ \mathcal{C}_{r-l}(x)) - \mathcal{C}_{r-l}(x)\,, 
\end{equation}
we have in view of~\eqref{eq:recursive-C_r_l+1} that for every $x \in \calJ,$
\begin{equation}
\label{eq:C_r-(l+1)}
\mathcal{C}_{r-(l+1)}(x) = \omega_{r-(l+1)} \circ (\omega_{r-(l+1)} \circ \mathcal{C}_{r-l}(x))\,.
\end{equation}
We now compute~$\mathcal{C}_{r-(l+1)}(x)$ starting with the following term: 
\begin{align}
\label{eq:inner-term-C_r-(l+1)}
\omega_{r-(l+1)} \circ \mathcal{C}_{r-l}(x) 
& \overset{(a)}{=} \omega_{r-(l+1)} \circ \left( x - \sum_{k=r-l}^r \sum_{1 \leq i < k} x_{ik} \right)\\
& \overset{(b)}{=} \omega_{r-(l+1)} \circ \left( \sum_{i=1}^r x_i e_i + \sum_{i < j} x_{ij}  - \sum_{k=r-l}^r \sum_{1 \leq i < k} x_{ik} \right)\\
& \overset{(c)}{=} \omega_{r-(l+1)} \circ \left( \sum_{i=1}^r x_i e_i + \sum_{k=2}^r \sum_{1 \leq i < k} x_{ik} - \sum_{k=r-l}^r \sum_{1 \leq i < k} x_{ik} \right)\\
& \overset{(d)}{=} (e - 2 e_{r-(l+1)}) \circ \left( \sum_{i=1}^r x_i e_i + \sum_{k=2}^{r-(l+1)} \sum_{1 \leq i < k} x_{ik} \right)\\
& \overset{(e)}{=} \sum_{i=1}^r x_i e_i + \sum_{k=2}^{r-(l+1)} \sum_{1 \leq i < k} x_{ik} - 2 x_{r- (l+1)} e_{r-(l+1)}
- 2 \sum_{k=2}^{r-(l+1)} \sum_{1 \leq i < k} e_{r-(l+1)} \circ x_{ik} \label{eq:omega-circ-C-r-l-interm}\,,
\end{align}
where (a) follows from using the closed form of~$\mathcal{C}_{r-l}(x)$ in \eqref{eq:C_r-l}, (b) stems from using the Peirce decomposition of~$x$ w.r.t.~$\{e_1, \cdots, e_r\}$, (c) from rewriting the second term of the Peirce decomposition, (d) uses the definition of~$\omega_{r- (l+1)}$ and simplifies the previous difference of sums. Finally, the last equality uses the distributivity of the Jordan product~$\circ$ over the addition in the EJA and the fact that $\{e_1, \cdots, e_r\}$ is a Jordan frame. 

We now simplify the last sum in the expression above. Observe that $i< k$ (in particular $i \neq k$) and for $k < r - (l+1)$, we have $\{r- (l+1)\} \cap \{i, k\} = \emptyset\,.$ By the orthogonality properties of the Peirce decomposition (see \eqref{eq:peirce-orthogonality}), it follows that $e_{r-(l+1)} \circ x_{ik} = 0$ for $k < r - (l+1)$. The only remaining term in the sum is the term for $k= r-(l+1)$, i.e, 
\begin{equation}
\label{eq:last-sum-to-simplify}
\sum_{k=2}^{r-(l+1)} \sum_{1 \leq i < k} e_{r-(l+1)} \circ x_{ik} \nonumber\\
= \sum_{1 \leq i < r-(l+1)} e_{r-(l+1)} \circ x_{i,r-(l+1)}\,.
\end{equation}
By definition of $\calJ_{i,r-(l+1)}$ in the Peirce decomposition (see Theorem~\ref{thm:peirce-decomp}, \eqref{eq:calJ_ij}), we have $e_{r-(l+1)} \circ x_{i,r-(l+1)} = \frac 12 x_{i,r-(l+1)}\,.$ Combining this with 
\eqref{eq:last-sum-to-simplify} and~\eqref{eq:omega-circ-C-r-l-interm}, we obtain 
\begin{equation}
\label{eq:omega-circ-C-r-l}
\omega_{r-(l+1)} \circ \mathcal{C}_{r-l}(x) = \sum_{i=1}^r x_i e_i - 2 x_{r- (l+1)} e_{r-(l+1)}\\
+ \sum_{k=2}^{r-(l+2)} \sum_{1 \leq i < k} x_{ik}\,.
\end{equation}
To proceed with our computation, observe now that 
\begin{equation}
\label{eq:omega-circ-omega-circ-C_r-l}
\omega_{r-(l+1)} \circ (\omega_{r-(l+1)} \circ \mathcal{C}_{r-l}(x))\\
= \omega_{r-(l+1)} \circ \mathcal{C}_{r-l}(x) - 2 e_{r-(l+1)} \circ (\omega_{r-(l+1)} \circ \mathcal{C}_{r-l}(x))\,.
\end{equation}
We compute the last term in the previous identity using~\eqref{eq:omega-circ-C-r-l} as follows: 
\begin{align}
\label{eq:e_r-(l+1)-circ-omega-circ-C_r-l}
e_{r-(l+1)} \circ (\omega_{r-(l+1)} \circ \mathcal{C}_{r-l}(x)) 
&=  e_{r-(l+1)} \circ 
\left( \sum_{i=1}^r x_i e_i - 2 x_{r- (l+1)} e_{r-(l+1)} + \sum_{k=2}^{r-(l+2)} \sum_{1 \leq i < k} x_{ik}  \right) \nonumber\\
&= x_{r-(l+1)} e_{r-(l+1)} - 2 x_{r- (l+1)} e_{r-(l+1)} +  \sum_{k=2}^{r-(l+2)} \sum_{1 \leq i < k} e_{r-(l+1)} \circ x_{ik} \nonumber\\
&= - x_{r-(l+1)} e_{r-(l+1)}\,, 
\end{align}
where the last identity from observing that $e_{r-(l+1)} \circ x_{ik} = 0$ for every $2 \leq k \leq r-(l+2), 1 \leq i < k$ again by the orthogonality properties of the Peirce decomposition (see~\eqref{eq:peirce-orthogonality}). Overall, combining \eqref{eq:C_r-(l+1)}, \eqref{eq:omega-circ-C-r-l},  \eqref{eq:omega-circ-omega-circ-C_r-l} and~\eqref{eq:e_r-(l+1)-circ-omega-circ-C_r-l}, we obtain 
\begin{equation}
\mathcal{C}_{r-(l+1)}(x) 
= \sum_{i=1}^r x_i e_i + \sum_{k=2}^{r-(l+2)} \sum_{1 \leq i < k} x_{ik}\\  
=  x - \sum_{k=r-(l+1)}^r \sum_{1 \leq i < k} x_{ik}\,.
\end{equation}
This concludes the proof. 
\end{proof}

\begin{corollary}
\label{cor:diag-C2}
The diagonal mapping coincides with $\mathcal{C}_2\,.$ 
\end{corollary}
\begin{proof}
Set $l= r-2$ in Proposition~\ref{prop:C_r-l-formula} and recall the Peirce decomposition: 
\begin{equation}
x = \sum_{i=1}^r x_i e_i + \sum_{i < j} x_{ij} = \sum_{i=1}^r x_i e_i + \sum_{k=2}^r \sum_{1 \leq i < k} x_{ik}\,.
\end{equation}
\end{proof}

\begin{proposition}
\label{prop:C_k-convex-auto}
For all $k \in \{0, \cdots, r-1 \},$ $\mathcal{C}_{r+1 - k}$ is a convex combination of automorphisms. 
\end{proposition}

\begin{proof}
We prove this result by induction. The base case $k=0$ follows from the fact that~$\mathcal{C}_{r+1}$ is defined as the identity operator, see \eqref{eq:def-Ck}. Suppose now that~$\mathcal{C}_{r+1-k}$ is a convex combination of automorphisms for some $k \in \{0, \cdots, r-2\}\,.$ 
Hence, there exist a sequence of nonnegative reals $(\lambda)_{1 \leq i \leq n}$ s.t. $\sum_{i=1}^n \lambda_i = 1$ and a sequence of $n$ EJA automorphisms $(\phi_i)_{1 \leq i \leq n}$ (where $n$ is an unknown integer here that will not be further specified) s.t. $\mathcal{C}_{r+1-k} = \sum_{i=1}^n \lambda_i \phi_i\,.$ 
Using the recursive definition of $\mathcal{C}_{r+1 - (k+1)}$, we have for every $x \in \calJ,$
\begin{align}
\mathcal{C}_{r+1 - (k+1)}(x)
&= \frac 12 \left(\mathcal{C}_{r+1 - k}(x) + P_{\omega_{r+1 - (k+1)}}\left(\mathcal{C}_{r+1-k}(x)\right)  \right)\, \nonumber\\  
&= \frac 12 \left(\sum_{i=1}^n \lambda_i \phi_i(x) + P_{\omega_{r+1 - (k+1)}}\left(\sum_{i=1}^n \lambda_i \phi_i(x)\right)  \right)\,\nonumber\\  
&= \sum_{i=1}^n \frac{\lambda_i}{2} \phi_i(x) + \sum_{i=1}^n \frac{\lambda_i}{2} P_{\omega_{r+1 - (k+1)}}\left( \phi_i(x)\right),  
\end{align}
where we have used the linearity of the operator~$P_{\omega_{r+1 - (k+1)}}$. To conclude, note that $P_{\omega_{r+1 - (k+1)}}$ is an automorphism by Proposition II.4.4, p. 37 in \citet{faraut-koranyi94sym-cones} since $\omega_{r+1 - (k+1)}^2 = e\,.$ Moreover, the composition of two automorphisms is yet another automorphism since the set of EJA automorphisms is a group for the composition. Therefore for every $i \in \{1, \cdots, n \},$ the composition $P_{\omega_{r+1 - (k+1)}} \phi_i$ is an automorphism. This concludes the proof. 
\end{proof}

\begin{corollary}
The diagonal mapping is a convex combination of automorphisms.
\end{corollary}
\begin{proof}
Combining Corollary~\ref{cor:diag-C2} and Proposition~\ref{prop:C_k-convex-auto} (with $k= r-1$) gives the result.
\end{proof}

\noindent\textbf{(iii) Using joint convexity of the relative entropy and properties of automorphisms.} 
Using the joint convexity of the relative entropy shown in Corollary 3.4 p. 8 in \citet{faybusovich16lieb-cvx-ejas}, we can write: 
\begin{equation}
\label{eq:joint-cvx-rel-entrop}
D_{\Phi}(T(x), T(y)) = D_{\Phi}\left(\sum_{i=1}^n \lambda_i \phi_i(x), \sum_{i=1}^n \lambda_i \phi_i(y)\right) 
\leq \sum_{i=1}^n \lambda_i D_{\Phi}(\phi_i(x), \phi_i(y))\,.
\end{equation}
We now conclude the proof by showing that $D_{\Phi}(\phi_i(x), \phi_i(y)) = D_{\Phi}(x, y)$\, for any $1 \leq i \leq n\,.$ We note that the previous quantity is well-defined as the cone of squares of an EJA is invariant under any automorphism of an EJA. We now use the definition of the relative entropy and the fact that every automorphism is also a doubly stochastic map. In particular it is trace preserving. We have 
\begin{align}
\label{eq:interm-D-phi-i}
D_{\Phi}(\phi_i(x), \phi_i(y))
&= \text{tr}(\phi_i(x) \circ \ln \phi_i(x) - \phi_i(x) \circ \ln \phi_i(y) + \phi_i(y) - \phi_i(x))\nonumber\\
&= \text{tr}(\phi_i(x) \circ \ln \phi_i(x)) - \text{tr}( \phi_i(x) \circ \ln \phi_i(y))\nonumber + \text{tr}(\phi_i(y)) - \text{tr}(\phi_i(x)) \nonumber\\
&= \text{tr}(\phi_i(x) \circ \ln \phi_i(x)) - \text{tr}( \phi_i(x) \circ \ln \phi_i(y)) + \text{tr}(y) - \text{tr}(x)\,.
\end{align}
We now show that for any $x \in \mathcal{K},$ and any automorphism~$\phi$ of the EJA, 
\begin{equation}
\label{eq:auto-ln-commute}
\ln \phi(x) = \phi(\ln x)\,.
\end{equation}
To see this, observe that there exists a Jordan frame~$\{q_1, \cdots, q_r\}$ s.t. $x = \sum_{i=1}^r \lambda_i(x) q_i$ where $\{\lambda_i(x)\}_{1 \leq i \leq r}$ are the eigenvalues of~$x$. Then by definition $\ln x =  \sum_{i=1}^r \ln(\lambda_i(x)) q_i\,.$ By linearity of~$\phi$ we have $\phi(\ln x) = \sum_{i=1}^r \ln(\lambda_i(x)) \phi(q_i)\,.$ Since $\phi$ is an algebra automorphism, it maps Jordan frames to Jordan frames (see e.g. \citet{gowda17positive-maps-ejas}). Therefore, $\{\phi(q_1), \cdots, \phi(q_r)\}$ is also a Jordan frame. As a consequence, we can also write $\phi(\ln x) = \sum_{i=1}^r \ln (\lambda_i(y)) \phi(q_i)\,.$ We have shown that $\phi(\ln(x)) = \ln(\phi(x))\,.$\\

We now conclude the proof by combining~\eqref{eq:interm-D-phi-i} with the identity we have just established by observing that: 
\begin{equation}
\text{tr}( \phi_i(x) \circ \ln \phi_i(y)) = \text{tr}( \phi_i(x) \circ \phi_i(\ln(y))) 
= \text{tr}( \phi_i(x \circ \ln y)) =\text{tr}( x \circ \ln y)\,,  
\end{equation}
where the first equality follows from~\eqref{eq:auto-ln-commute}, the second by definition of an automorphism and the last by the fact that an automorphism is trace preserving. Similarly, we have  $\text{tr}( \phi_i(x) \circ \ln \phi_i(x)) = \text{tr}( x \circ \ln x)\,.$ 

We have shown that $D_{\Phi}(\phi_i(x), \phi_i(y)) = D_{\Phi}(x, y)$ and hence from~\eqref{eq:joint-cvx-rel-entrop} the desired inequality. 

Some comments are in order regarding the proof of this theorem.
\begin{itemize}[leftmargin=*]

\item \textbf{About the particular case of the Jordan spin algebra~$\mathbb{L}^n = \mathbb{R} \times \mathbb{R}^{n-1}, n \geq 3$.}  We provide some comments about the proof of Theorem~\ref{thm:proj-ineq} when the EJA is the Jordan spin algebra, the limitations of this proof and difficulties to generalize it to (even simple) EJAs. This motivates our specific treatment focused on the properties of the diagonal mapping. The diagonal operator $T$ is a doubly stochastic mapping, i.e, a linear map which is positive, unital (maps the unit element to the unit element) and trace preserving. We refer the reader to e.g. \citet{gowda17positive-maps-ejas} for more background and example 7 therein for the aforementioned fact. Then we can use Theorem 9, p. 57 in \citet{gowda17positive-maps-ejas} stating that $\text{DS}(\mathbb{L}^n) = \text{conv}(\text{Aut}(\mathbb{L}^n))$. Here $\text{conv}$ denotes the convex hull, $\text{Aut}(\mathbb{L}^n)$ is the set of automorphisms of~$\mathbb{L}^n$ , a (Jordan) algebra automorphism $\phi: \mathcal{J} \to \mathcal{J}$ being an invertible linear transformation satisfying $\phi(x \circ y) = \phi(x) \circ \phi(y)$ for any $x,y \in \mathcal{J}$ and $\text{DS}(\mathbb{L}^n)$ is the set of doubly stochastic mappings. Using this result together with the fact that $T \in \text{DS}(\mathbb{L}^n)$, it follows that $T \in \text{conv}(\text{Aut}(\mathbb{L}^n))$ and hence there exist a family of nonnegative scalars $\{ \lambda_i\}_{1 \leq i \leq n}$ s.t. $\sum_{i=1}^n \lambda_i = 1$ and a family of automorphisms~$\{\phi_i\}_{1 \leq i \leq n}$ of the EJA s.t. $T = \sum_{i=1}^n \lambda_i \phi_i\,.$ The rest of the proof follows the same lines as the proof for the general case of EJAs using joint convexity of the relative entropy. 

\item The fact that $\text{DS}(\mathbb{L}^n) = \text{conv}(\text{Aut}(\mathbb{L}^n))$ only holds for the Jordan spin algebra. For instance it is not true for the simple algebra of all 3 × 3 complex Hermitian matrices (see Example 11 in \citet{gowda17positive-maps-ejas}; see also example 10 therein for another counterexample). There is a weaker pointwise version for any \textit{simple} EJA, i.e., $\text{DS}(\mathcal{J}) z = \text{conv}(\text{Aut}(\mathcal{J})) z$ for any \textit{simple} EJA~$\mathcal{J}$ and any~$z \in \mathcal{J}$, see Theorem 6 in \citet{gowda17positive-maps-ejas}. This result is not sufficient for our purposes to invoke the \textit{joint} convexity of the relative entropy as this would only give different convex decompositions for $T(x), T(y)$ depending on~$x, y \in \mathcal{J}$. Nevertheless, we only need a decomposition of the diagonal mapping as a convex combination of automorphisms and this is precisely the result we show in the proof above with a decomposition of the mapping which does not depend on the point we evaluate the diagonal map in. 
\end{itemize}

\end{proof}

\section{Proofs for Section~\ref{sec:regret-osc-games}}  

\subsection{Proof of Theorem~\ref{thm:sum-regret-bound}}

The proof follows similar lines as the proof of Theorem 4 in \citet{syrgkanis2015fast}. We provide here a complete proof for our setting of symmetric cone games under Assumption~\ref{as:payoff-vector}. First, it follows from Proposition~\ref{prop:rvu-bound-oscmwu} that the regret~$r_i(T)$ of each player~$i \in \mathcal{N}$ satisfies 
\begin{equation}
\label{eq:player-i-regret}
r_i(T) \leq \frac{R_i}{\eta} + \eta \sum_{t=1}^T \|m_i^t - m_i^{t-1}\|_{*}^2 - \frac{1}{4 \eta} \sum_{t=1}^T \|x_i^t - x_i^{t-1}\|^2\,.
\end{equation}
The main task now consists in controlling the second term in the above bound using our smoothness assumption and use the negative term.  Using assumption~\ref{as:payoff-vector} and the definition of the payoff vectors, we have 
\begin{equation} 
\label{eq:bound-mt-mt-1}
\|m_i^t - m_i^{t-1}\|_{*} = \|m_i(x^t) - m_i(x^{t-1})\|_{*} \leq L_i \|x^t - x^{t-1}\| \leq L_i \sum_{j = 1}^N \|x_j^t - x_j^{t-1}\|\,, 
\end{equation}
where the last inequality follows from Lemma~\ref{lem:triangle-ineq} (note that the last norm is the norm defined on the EJA~$\calJ_j$). 
Then it follows from the bound~\eqref{eq:bound-mt-mt-1} and Jensen's inequality that
\begin{equation}
\label{eq:bound-mti-mt-1i-squared}
\|m_i^t - m_i^{t-1}\|_{*}^2 \leq N L_i^2 \sum_{j=1}^N \|x_j^t - x_j^{t-1}\|^2\,.
\end{equation}
We observe in this bound that the payoff vector variations of player~$i$ are bounded by the strategy variations of all the players. Therefore to control this error we will be summing up the regret bounds over all players in view of using the negative terms induced by each player in the individual regret bounds to balance the error. Summing up \eqref{eq:player-i-regret} and using~\eqref{eq:bound-mti-mt-1i-squared}, we obtain 
\begin{equation}
\sum_{i=1}^N r_i(T) \leq \frac{\sum_{i=1}^N R_i}{\eta} 
+ \left( \eta N \sum_{i=1}^N L_i^2  - \frac{1}{4 \eta} \right) \sum_{i=1}^N \sum_{t=1}^T \|x_i^t - x_i^{t-1}\|^2\,. 
\end{equation}
Setting $\eta = 1/(2 \sqrt{N \sum_{i=1}^N L_i^2})$ concludes the proof. 

\subsection{Min-Max Games}
\label{appx:proof-thm-2pzs-avg-regret}

\begin{theorem}
\label{thm:2pzs-avg-regret}
Consider a two-player symmetric cone game setting ($N=2$). Assume that the game is zero-sum $(u_2 = -u_1 = f)$. Then the following statements hold:  
\begin{itemize}
\item The min-max theorem holds, i.e. $\underset{x \in \Delta_{\mathcal{K}_1}}{\min} \underset{y \in \Delta_{\mathcal{K}_2}}{\max} f(x,y) = \underset{y \in \Delta_{\mathcal{K}_2}}{\max} \underset{x \in \Delta_{\mathcal{K}_1}}{\min} f(x,y) := v^*\,.$ 
\item The duality gap is bounded by the sum of regrets, i.e. for any sequences $(x^t), (y^t)$, the average sequences $(\bar{x}_T), (\bar{y}_T)$ defined by $\bar{x}_T = \frac{1}{T} \sum_{t=1}^T x^t, \bar{y}_T = \frac{1}{T} \sum_{t=1}^T y^t$ satisfy for $T \geq 1,$ 
\begin{equation}
0 \leq d(\bar{x}_T, \bar{y}_T) := \underset{y \in \Delta_{\mathcal{K}_2}}{\max} f(\bar{x}_T,y) - \underset{x \in \Delta_{\mathcal{K}_1}}{\min} f(x, \bar{y}_T) \leq \frac{r_1(T) + r_2(T)}{T}\,, 
\end{equation}
where $r_1, r_2$ are the regrets incurred by players 1 and 2 respectively. 
\item The suboptimality gaps for the min and max dual problems are bounded by the sum of regrets, i.e. 
$d_1(\bar{x}_T) := \underset{y \in \Delta_{\mathcal{K}_2}}{\max} f(\bar{x}_T,y) - v^*$ and $d_2(\bar{y}_T) := v^* - \underset{x \in \Delta_{\mathcal{K}_1}}{\min} f(x, \bar{y}_T)$ satisfy: 
\begin{align}
0 \leq d_1(\bar{x}_T) \leq \frac{r_1(T) + r_2(T)}{T}\,,\quad 
0 \leq d_2(\bar{y}_T) \leq \frac{r_1(T) + r_2(T)}{T}\,.
\end{align}

\item The point $(\bar{x}_T, \bar{y}_T) \in \Delta_{\mathcal{K}_1} \times \Delta_{\mathcal{K}_2}$ is an $\frac{r_1(T) + r_2(T)}{T}$-approximate saddle point, i.e. for every $(x,y) \in \Delta_{\mathcal{K}_1} \times \Delta_{\mathcal{K}_2},$
\begin{equation}
f(\bar{x}_T, y) - \frac{r_1(T) + r_2(T)}{T} \leq f(\bar{x}_T, \bar{y}_T) \leq f(x, \bar{y}_T) + \frac{r_1(T) + r_2(T)}{T}\,.
\end{equation}
\end{itemize}
\end{theorem}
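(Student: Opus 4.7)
The plan is to establish the four claims in order, starting from the min-max equality, then extracting one-sided bounds from each player's regret definition, combining them via Jensen's inequality, and finally reading off the saddle-point property. Since the strategy sets $\Delta_{\mathcal{K}_1}, \Delta_{\mathcal{K}_2}$ are compact convex subsets of their respective EJAs and $f$ is convex-concave and continuous (by differentiability), Sion's minimax theorem applies directly and yields the first bullet, namely $\min_x \max_y f(x,y) = \max_y \min_x f(x,y) = v^*$. This step is essentially free and requires no game-theoretic machinery beyond the compactness already noted after the definition of the generalized simplex.

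For the duality gap bound, I would first unfold the regret definitions using $u_1 = -f$ and $u_2 = f$, obtaining $r_1(T) = \sum_{t=1}^T f(x^t, y^t) - \inf_{x} \sum_{t=1}^T f(x, y^t)$ and $r_2(T) = \sup_{y} \sum_{t=1}^T f(x^t, y) - \sum_{t=1}^T f(x^t, y^t)$. Adding these makes the $\sum_t f(x^t,y^t)$ terms cancel, giving $r_1(T)+r_2(T) = \sup_y \sum_t f(x^t, y) - \inf_x \sum_t f(x, y^t)$. Then I would apply Jensen's inequality in both arguments: convexity of $f(\cdot, y)$ yields $T f(\bar{x}_T, y) \le \sum_t f(x^t, y)$ for every $y$, and concavity of $f(x, \cdot)$ yields $T f(x, \bar{y}_T) \ge \sum_t f(x, y^t)$ for every $x$. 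Taking supremum and infimum respectively and dividing by $T$ gives $\max_y f(\bar{x}_T, y) - \min_x f(x, \bar{y}_T) \le (r_1(T)+r_2(T))/T$, which is the duality gap bound. Non-negativity is immediate from $\max_y f(\bar{x}_T, y) \ge f(\bar{x}_T, \bar{y}_T) \ge \min_x f(x, \bar{y}_T)$.

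For the suboptimality gaps, I would use the min-max equality to write $d(\bar{x}_T, \bar{y}_T) = d_1(\bar{x}_T) + d_2(\bar{y}_T)$, since $v^*$ cancels in the sum. The bounds $\min_x f(x, \bar{y}_T) \le v^*$ (holding because $\min_x f(x, \bar{y}_T) \le \max_y \min_x f(x,y) = v^*$) and $\max_y f(\bar{x}_T, y) \ge v^*$ show that both $d_1(\bar{x}_T), d_2(\bar{y}_T) \ge 0$; combined with the duality gap bound, each summand is then individually bounded by $(r_1(T)+r_2(T))/T$. Finally, the approximate saddle-point inequalities follow by chaining: $f(\bar{x}_T, y) \le \max_{y'} f(\bar{x}_T, y') = v^* + d_1(\bar{x}_T) \le f(\bar{x}_T, \bar{y}_T) + d_1(\bar{x}_T) + d_2(\bar{y}_T)$, using $f(\bar{x}_T, \bar{y}_T) \ge \min_x f(x, \bar{y}_T) = v^* - d_2(\bar{y}_T)$; the other direction is symmetric.

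There is no real technical obstacle here: the argument is a standard online-to-saddle-point conversion that uses only convex-concavity, compactness of the strategy sets, and the min-max theorem, and is independent of the specific EJA structure. The only care required is bookkeeping the signs since $u_1 = -f$ while $u_2 = +f$, and verifying that Sion's theorem applies in the required form for generalized simplices, which is ensured by the compact-convex-continuous hypotheses already in place.
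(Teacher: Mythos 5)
Your proof is correct and follows essentially the same route as the paper's: the min-max theorem from compactness and convex-concavity, cancellation of the diagonal term $\sum_t f(x^t,y^t)$ when summing the two regrets, Jensen's inequality in each argument, and then reading off the suboptimality-gap and saddle-point claims from the duality-gap bound via $d = d_1 + d_2$ with both summands nonnegative. The paper merely organizes the same steps as a single chain of inequalities running from $v^* - r_2(T)/T$ to $v^* + r_1(T)/T$; no substantive difference.
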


\begin{proof}
$(i)$ follows from the generalized von Neumann min-max theorem upon noticing that $\Delta_{\mathcal{K}_1}, \Delta_{\mathcal{K}_2}$ are convex compact sets and $f$ is convex w.r.t. its first argument and concave w.r.t. its second argument, see e.g. section 5 in \citet{freund-shapire99MW} for an online learning algorithmic proof. 
We then have the chain of inequalities:
\begin{align}
    v^* - \frac{r_2(T)}{T}
    &= \minx \maxy f(x,y) - \frac{r_2(T)}{T} \label{eq:minmax_chain_1} \\
    &\leq \maxy f(\tave{x},y) - \frac{r_2(T)}{T} \label{eq:minmax_chain_2}\\
    &\leq \maxy \frac{1}{T} \sum_{t=1}^T f(x^t,y) - \frac{r_2(T)}{T} \nonumber \\
    &= \frac{1}{T} \sum_{t=1}^T f(x^t, y^t) \nonumber \\
    &= \minx \frac{1}{T} \sum_{t=1}^T f(x, y^t) + \frac{r_1(T)}{T} \nonumber \\
    &\leq \minx f(x, \tave{y}) + \frac{r_1(T)}{T} \label{eq:minmax_chain_3}\\
    &\leq \maxy \minx f(x,y) + \frac{r_1(T)}{T} \nonumber \\
    &= v^* +\frac{r_1(T)}{T} \label{eq:minmax_chain_4}.
\end{align}

Comparing the expressions in lines \eqref{eq:minmax_chain_2} and \eqref{eq:minmax_chain_3} immediately gives the bound on the duality gap in $(ii)$, while comparing \eqref{eq:minmax_chain_2} with \eqref{eq:minmax_chain_4} and \eqref{eq:minmax_chain_1} with \eqref{eq:minmax_chain_3} give the bounds on the suboptimality gap in $(iii)$. 

Finally, comparing \eqref{eq:minmax_chain_2} and \eqref{eq:minmax_chain_3} again (or just taking the result in $(ii)$) tells us that
\begin{equation}
\label{eq:duality_gap}
    \maxy f(\tave{x},y) - \frac{r_2(T)}{T}
    \leq
    \minx f(x, \tave{y}) + \frac{r_1(T)}{T}.
\end{equation}
Since the right-hand-side of \eqref{eq:duality_gap} is a lower bound to $f(\tave{x}, \tave{y}) + \frac{r_1(T)}{T}$, we have 
\begin{equation}
\label{eq:yBR}
    \maxy f(\tave{x},y) - \frac{r_1(T) + r_2(T)}{T}
    \leq
    f(\tave{x},\tave{y}),
\end{equation}
i.e. that $\tave{y}$ is an $\frac{r_1(T) + r_2(T)}{T}$-approximate best response to $\tave{x}$. On the other hand, since the left-hand-side of \eqref{eq:duality_gap} is an upper bound to $f(\tave{x},\tave{y}) - \frac{r_2(T)}{T}$ we have 
\begin{equation}
\label{eq:xBR}
    f(\tave{x},\tave{y})
    \leq
    \minx f(x,\tave{y}) + \frac{r_1(T) + r_2(T)}{T},
\end{equation}
i.e. that $\tave{x}$ is an $\frac{r_1(T) + r_2(T)}{T}$-approximate best response to $\tave{y}$. Putting \eqref{eq:yBR} and \eqref{eq:xBR} together gives us $(iv)$ (i.e. that $(\tave{x},\tave{y})$ is an $\frac{r_1(T) + r_2(T)}{T}$-approximate saddle point). 
\end{proof}

\section{More General Hybrid Symmetric Cone Min-Max Games} 
\label{appx:more-gen-hybrid}

We consider here bilinear min-max games over the most generalized simplexes \eqref{OSCMWU} can handle. Since bilinear utilities can be characterized by linear functions over the tensor product space, we express the min-max problem using tensor product notation for symmetry between players:
\begin{equation}
\label{game:general}
    \min_{x \in \DKx}
    \max_{y \in \DKy}
    \inp{z}{x \otimes y}\,,
\end{equation}
where $\Kx = \psd{n_1} \times \prod_{i=1}^{m_1} \soc{d_i}$ is the cone squares of the EJA $\Jx = \sym{n_1} \times \prod_{i=1}^{m_1} \spin{d_{1,i}}$ and
$\Ky = \psd{n_2} \times \prod_{i=1}^{m_2} \soc{d_{2,i}}$ is the cone squares of the EJA $\Jx = \sym{n_2} \times \prod_{i=1}^{m_2} \spin{d_{2,i}}$. $z \in \Jx \otimes \Jy$ is the ``game tensor'', and similarly to the case of quantum games we can obtain the gradients of the players' utilities with respect to their own strategies: $m_1(x,y) = - \try\big(z \circ (e_1 \otimes y) \big), m_2(x,y) = \trx\big(z \circ (x \otimes e_2) \big)$ 
where $e_1, e_2$ are the identity elements of $\Jx, \Jy$ respectively. See appendix below for a derivation of this and the definition of the partial trace functions $\trx, \try$. Then, if the utilities are bounded, i.e., $\max_{x \in \DKx, y \in \DKy} \abs{\inp{z}{x \otimes y}} \leq L$ 
for some $L > 0$, then we have  the Lipschitz constants $L_1, L_2 \leq L$. Assumption \ref{as:payoff-vector} holds (by Proposition \ref{prop:multilinear_lipschitz} using Assumption~\ref{as:multilinear} which is satisfied, see Appendix~\ref{sec:multilinear} below for more details). Finally, we have  $R_1 = \ln(\rank(\Jx)) = \ln(n_1+2m_1)$ and similarly that $R_2 = \ln(n_2 + 2m_2)$, so by Theorem \ref{thm:minmax}, in order to obtain an $\epsilon$-saddle point of \eqref{game:general} it suffices for both players to run \eqref{OSCMWU} with stepsize $\eta = \frac{1}{2L}$ for $T \geq \frac{4(\ln(n_1+2m_1) + \ln(n_2 + 2m_2))L}{\epsilon}$ iterations.

\section{Games with Multilinear Utilities}
\label{sec:multilinear}

In this section we present some technical results specific to games with multilinear utilities. 

\subsection{Smoothness of payoff vectors for multilinear games}

In our multilinear game setting, we follow the notation of the main part 
but replace Assumption \ref{as:payoff-vector} with the following assumption on the utilities $u_i : \mathcal{X} \to \R$:
\begin{assumption}
\label{as:multilinear}
    Each payoff function $u_i : \mathcal{X} = \prod_j \calJ_j \to \R$ is multilinear and satisfies $\abs{u_i(x)} \leq L_i \ \forall x \in \mathcal{X}.$
\end{assumption}

The bound on each $\abs{u_i(x)}$ in Assumption \ref{as:multilinear} has a natural interpretation that the payoffs that each player can obtain are bounded. Proposition \ref{prop:multilinear_lipschitz} allows us to convert this into a lipschitz bound on the payoff vectors. We first have the following technical lemma:
\begin{lemma}
\label{lem:multilinear_dualnorm_bound}
    Let $\calK_1, \ldots, \calK_n$ be the cones of squares of EJAs $\calJ_1, \ldots, \calJ_n$ respectively, and suppose that $u_i : \prod_{j=1}^n \calJ_j \to \R$ is multilinear with $\abs{u_i(x)} \leq L_i \ \forall x \in \prod_{j=1}^n \DKj$. Then for any $j \in [n]$ the linear function $u_i(\farg, x_{-j}): \calJ_j \to \R$ satisfies
    \[
        \norm{u_i(\farg, x_{-j})}_{\tr, 1, *} \leq L_i.
    \]
\end{lemma}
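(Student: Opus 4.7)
The approach is to exploit the linearity of $u_i(\cdot, x_{-j})$ to reduce the dual-norm supremum over the trace-$1$ unit ball to a supremum over the generalized simplex $\Delta_{\mathcal{K}_j}$, and then directly invoke the uniform boundedness of $u_i$ on the product of generalized simplices. Since $u_i$ is multilinear, fixing all coordinates except the $j$-th yields a linear functional on $\mathcal{J}_j$. By Riesz representation in the finite-dimensional inner product space $(\mathcal{J}_j, \langle \cdot, \cdot \rangle)$, this functional can be written as $v \mapsto \langle v, z\rangle$ for some $z \in \mathcal{J}_j$, so by definition of the dual norm
\[
\|u_i(\cdot, x_{-j})\|_{\tr,1,*} \;=\; \sup_{\|v\|_{\tr,1} \leq 1} |\langle v, z\rangle|.
\]

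The key step is to show this supremum equals $\sup_{v \in \Delta_{\mathcal{K}_j}} |\langle v, z\rangle|$, which is precisely the content of Lemma \ref{lem:abslin} (already used in the proof of trace-$p$ duality in Appendix \ref{appsec:tracep}). The underlying reasoning is that every $v$ with $\|v\|_{\tr,1} \leq 1$ has a spectral decomposition $v = \sum_k \lambda_k q_k$ with $\sum_k |\lambda_k| \leq 1$ and $\{q_k\}$ a Jordan frame; since each $q_k$ satisfies $\tr(q_k) = 1$ and hence $q_k \in \Delta_{\mathcal{K}_j}$, one can express $v$ as a signed convex combination of elements of $\Delta_{\mathcal{K}_j}$, so the trace-$1$ unit-ball supremum of a linear functional is attained on $\pm \Delta_{\mathcal{K}_j}$.

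Finally, whenever $v \in \Delta_{\mathcal{K}_j}$ and $x_{-j} \in \prod_{k \neq j} \Delta_{\mathcal{K}_k}$, the joint point $(v, x_{-j})$ lies in $\prod_{k} \Delta_{\mathcal{K}_k}$, so the boundedness hypothesis $|u_i(x)| \leq L_i$ yields $|\langle v, z\rangle| = |u_i(v, x_{-j})| \leq L_i$; taking the supremum over $v \in \Delta_{\mathcal{K}_j}$ gives the claim. I do not foresee a serious obstacle here: the only nontrivial ingredient is the reduction of the supremum from the trace-$1$ ball to $\Delta_{\mathcal{K}_j}$, which is a standard consequence of the spectral decomposition in EJAs and is already packaged as Lemma \ref{lem:abslin}; the remaining steps are purely definitional.
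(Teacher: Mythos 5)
Your proposal is correct and follows essentially the same route as the paper: the paper's proof is exactly the chain $\norm{u_i(\farg, x_{-j})}_{\tr,1,*} = \max_{\norm{x_j'}_{\tr,1}=1}\abs{u_i(x_j',x_{-j})} = \max_{x_j'\in\Delta_{\mathcal{K}_j}}\abs{u_i(x_j',x_{-j})} \leq L_i$, with the middle equality supplied by Lemma~\ref{lem:abslin} just as you indicate. The Riesz-representation step you insert is harmless but unnecessary, and your sketch of why Lemma~\ref{lem:abslin} holds (signed convex combination over a Jordan frame of trace-one primitive idempotents) matches the paper's own proof of that lemma.
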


\begin{proof}
    This follows directly from Lemma \ref{lem:abslin}:
    \begin{equation*}
    \norm{u_i(\farg, x_{-j})}_{\tr, 1, *} 
    = \max_{\norm{x_j'}_{\tr, 1}=1} \abs{u_i(x_j', x_{-j})} 
    = \max_{x_j' \in \DKj} \abs{u_i(x_j', x_{-j})}
    \leq L_i.    
    \end{equation*}
\end{proof}

This then gives us the following bound on $\norm{m_i^t - m_i^{t-1}}_*$ in terms of $\sum_{j \neq i} \norm{x_j^t - x_j^{t-1}}$.

\begin{proposition}
\label{prop:multilinear_lipschitz}
    Suppose that the game satisfies Assumption \ref{as:multilinear} and is played repeatedly so that at time $t$ player $i$'s strategy is given by $x_i^{t}$. Fix the norm $\norm{\farg} = \norm{\farg}_{\tr, 1}$. Then for each player $i$ and time $t$, we have the following bound:
    \[
        \norm{m_i^t -m_i^{t-1}}_* \leq L_i \sum_{j \neq i} \norm{x_j^t - x_j^{t-1}}.
    \]
\end{proposition}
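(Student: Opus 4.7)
The plan is to combine the dual characterization of $\|\cdot\|_{\tr,1,*}$ with a coordinate-by-coordinate telescoping argument that exploits the multilinearity of $u_i$. Since $u_i$ is linear in its $i$-th slot, we can identify the payoff vector $m_i^t = m_i(x^t_{-i}) \in \calJ_i$ through the pairing $\langle z, m_i(x_{-i})\rangle = u_i(z, x_{-i})$ for all $z \in \calJ_i$. Therefore
\begin{equation*}
\|m_i^t - m_i^{t-1}\|_* \;=\; \sup_{\|z\|_{\tr,1}\le 1} \bigl|u_i(z, x^t_{-i}) - u_i(z, x^{t-1}_{-i})\bigr|
\;=\; \sup_{z \in \Delta_{\mathcal{K}_i}} \bigl|u_i(z, x^t_{-i}) - u_i(z, x^{t-1}_{-i})\bigr|,
\end{equation*}
where the second equality follows from Lemma~\ref{lem:abslin}, which allows us to restrict the supremum from the trace-$1$ unit ball to the generalized simplex $\Delta_{\mathcal{K}_i}$.

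\textbf{Telescoping and per-coordinate bound.} Enumerate the players other than $i$ as $j_1,\ldots,j_{n-1}$ and define hybrid tuples $\tilde x^{(k)}_{-i}$ whose first $k$ coordinates (in this enumeration) are taken from $x^t$ and whose remaining coordinates are taken from $x^{t-1}$, so that $\tilde x^{(0)}_{-i} = x^{t-1}_{-i}$ and $\tilde x^{(n-1)}_{-i} = x^t_{-i}$. A standard telescoping decomposition gives
\begin{equation*}
u_i(z, x^t_{-i}) - u_i(z, x^{t-1}_{-i}) \;=\; \sum_{k=1}^{n-1} \Bigl[u_i\bigl(z, \tilde x^{(k)}_{-i}\bigr) - u_i\bigl(z, \tilde x^{(k-1)}_{-i}\bigr)\Bigr].
\end{equation*}
Consecutive hybrids differ only in the $j_k$-th coordinate, and multilinearity of $u_i$ in that coordinate lets me absorb the difference inside: each summand becomes a linear functional of $x_{j_k}^t - x_{j_k}^{t-1}$ with all other arguments held fixed at points lying in the corresponding generalized simplices (including $z \in \Delta_{\mathcal{K}_i}$, by the previous step). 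Applying Lemma~\ref{lem:multilinear_dualnorm_bound} to this linear functional yields
\begin{equation*}
\bigl|u_i\bigl(z, \tilde x^{(k)}_{-i}\bigr) - u_i\bigl(z, \tilde x^{(k-1)}_{-i}\bigr)\bigr| \;\le\; L_i \,\|x_{j_k}^t - x_{j_k}^{t-1}\|_{\tr,1}.
\end{equation*}
Summing over $k$ and taking the supremum over $z \in \Delta_{\mathcal{K}_i}$ gives the stated bound.

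\textbf{Main obstacle.} The only delicate point is ensuring that when Lemma~\ref{lem:multilinear_dualnorm_bound} is invoked at each telescoping step, every argument of $u_i$ other than the active slot indeed lies in its respective generalized simplex; this is precisely why the reduction to $z \in \Delta_{\mathcal{K}_i}$ via Lemma~\ref{lem:abslin} is the crucial first step, and why we must build the hybrids $\tilde x^{(k)}_{-i}$ out of $x^t$ and $x^{t-1}$ (each entry of which is in its simplex by assumption) rather than arbitrary vectors of bounded trace-$1$ norm. Everything else reduces to routine applications of multilinearity and the triangle inequality.
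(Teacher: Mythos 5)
Your proposal is correct and follows essentially the same route as the paper's proof: reduce the dual norm to a supremum over $\Delta_{\mathcal{K}_i}$ via Lemma~\ref{lem:abslin}, telescope over the coordinates $j \neq i$ through hybrid tuples, and bound each single-coordinate difference by $L_i \|x_j^t - x_j^{t-1}\|$ using Lemma~\ref{lem:multilinear_dualnorm_bound}. The only cosmetic difference is that you make the telescoping/hybrid construction explicit where the paper leaves it implicit in its final display.
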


\begin{proof}
    We first observe that, for given $i \neq j$ and $\forall x_i \in \DKi$, $x_{-ij} \in \prod_{k \not\in \{i,j\}} \DKk$, and time $t$,
    \begin{equation}
    \label{eq:obs}
        \abs{
            u_i(x_i, x_j^t, x_{-ij})
            -
            u_i(x_i, x_j^{t-1}, x_{-ij})
            }
        \leq
        L_i \norm{x_j^t - x_j^{t-1}}
    \end{equation}
    by Lemma \ref{lem:multilinear_dualnorm_bound} and Assumption \ref{as:multilinear}. 
    We thus have 
    \begin{equation*}
    \begin{split}
        \norm{u_i(\farg, x_j^t, x_{-ij}) - u_i(\farg, x_{j}^{t-1}, x_{-ij})}_*
        &=
        \max_{\norm{x_i}=1} 
        \abs{
            u_i(x_i, x_j^t, x_{-ij})
            -
            u_i(x_i, x_{j}^{t-1}, x_{-ij})
            } \\
        &=
        \max_{x_i \in \DKi}
            \abs{
                u_i(x_i, x_j^t, x_{-ij})
                -
                u_i(x_i, x_j^{t-1}, x_{-ij})
            } \\
        &\leq L_i \norm{x_j^t - x_j^{t-1}},
    \end{split}
    \end{equation*}
    where the second equality follows from Lemma \ref{lem:abslin} and the fact that $u_i$ is multilinear, and the inequality follows from \eqref{eq:obs}.
    Finally, index the $j \neq i$ by $j_1, j_2, \ldots, j_{n-1}$ and use $x_{j_{[l, k]}}^t$ for $l \leq k$ to denote the tuple $(x_{j_l}^t, \ldots, x_{j_k}^t)$. 
    We have 
    \begin{equation*}
        \normd{m_i^t - m_i^{t-1}}
        =
        \normd{u_i(\farg, x_{-i}^t) - u_i(\farg, x_{-i}^{t-1})}
        \leq
        \sum_{l=1}^{n-1} 
        L_i \norm{x_{j_l}^t - x_{j_l}^{t-1}} 
        = L_i
        \sum_{j \neq i}
        \norm{x_j^t - x_j^{t-1}},
    \end{equation*}
    where the inequality follows from Lemma~\ref{lem:multilinear_dualnorm_bound}. 
\end{proof}

\subsection{Choi map for EJAs}
\label{app:choi}

Utility functions $u: \calJ_1 \times \calJ_2 \to \R$ that are multilinear on the product/direct-sum space can also be characterized by linear functions on the tensor product space, i.e.,
\begin{equation}
\begin{split}
\label{eq:u_multilinear_tensor}
    \tilde{u}: \calJ_1 \otimes \calJ_2 &\to \R, \\
    x \otimes y &\mapsto \inp{z}{x \otimes y}
\end{split}
\end{equation}
for some `game tensor' $z \in \calJ_1 \otimes \calJ_2$. (We write this section for two players for simplicity of notation, but it can easily be extended to the multiplayer case.) 

To obtain from this characterization the gradient of $u$ with respect to, say, the $x$-player, we use an extension of the Choi–Jamiołkowski isomorphism, which corresponds quantum states with quantum channels, to EJAs. The Choi–Jamiołkowski isomorphism has been similarly used to compute gradients in quantum games, see e.g, \citet{vasconcelos-et-al23}.

To do this, define the linear map (which depends on the game tensor $z$ defined in \eqref{eq:u_multilinear_tensor}) by
\begin{equation}
\begin{split}
\label{eq:choi}
    \choiz : \calJ_2 &\to \calJ_1, \\
    y &\mapsto \try\big(z \circ (e_1 \otimes y)\big)
\end{split}
\end{equation}
where the partial trace function on $\calJ_1 \otimes \calJ_2$ with respect to $\calJ_2$ is defined by
\begin{equation}
\begin{split}
    \try : \calJ_1 \otimes \calJ_2 &\to \calJ_1, \\
    x \otimes y &\mapsto \tr(y)x
\end{split}
\end{equation}
and $e_1$ is the identity element in $\calJ_1$. The following proposition then says that the gradient of a utility function $u$ (characterized by game tensor $z$ according to \eqref{eq:u_multilinear_tensor}) with respect to $x$ is simply $\choiz(y)$. 

\begin{proposition}
 \label{prop:Choi}
    Given a game tensor $z \in \calJ_1 \otimes \calJ_2$, define $\choiz$ as in \eqref{eq:choi}. Then the following holds:
    \begin{equation}
        \inp{z}{x \otimes y}
        =
        \inp{x}{\choiz(y)}.
    \end{equation}
\end{proposition}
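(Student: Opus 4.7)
The plan is to verify the identity first on simple (rank-one) tensors $z = a \otimes b$ with $a \in \calJ_1, b \in \calJ_2$, and then extend to arbitrary $z \in \calJ_1 \otimes \calJ_2$ by linearity of both sides in $z$. The key ingredients we need are the product structure on $\calJ_1 \otimes \calJ_2$, namely $(a \otimes b) \circ (c \otimes d) = (a \circ c) \otimes (b \circ d)$, and the induced inner product $\langle a \otimes b, c \otimes d\rangle = \langle a, c\rangle \langle b, d\rangle$. Both of these follow from the standard tensor-product construction used in the paper and match the partial-trace convention $\try(a \otimes b) = \tr(b)\, a$.

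For $z = a \otimes b$, I would first compute
\begin{equation*}
z \circ (e_1 \otimes y) \;=\; (a \otimes b) \circ (e_1 \otimes y) \;=\; (a \circ e_1) \otimes (b \circ y) \;=\; a \otimes (b \circ y),
\end{equation*}
using the identity property of $e_1$ in $\calJ_1$. Taking the partial trace over $\calJ_2$ then gives
\begin{equation*}
\choiz(y) \;=\; \try\bigl(a \otimes (b \circ y)\bigr) \;=\; \tr(b \circ y)\, a \;=\; \langle b, y\rangle\, a.
\end{equation*}
Therefore $\langle x, \choiz(y)\rangle = \langle b, y\rangle \,\langle x, a\rangle$, while on the tensor side $\langle z, x \otimes y\rangle = \langle a \otimes b, x \otimes y\rangle = \langle a, x\rangle \langle b, y\rangle$. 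These two quantities coincide by the symmetry of the EJA inner products, which establishes the identity on elementary tensors.

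To finish, I would note that the map $z \mapsto \choiz$ is linear in $z$ (since $\circ$ and $\try$ are both linear), so $z \mapsto \langle x, \choiz(y)\rangle$ is linear in $z$; similarly, $z \mapsto \langle z, x \otimes y\rangle$ is obviously linear. Since $\calJ_1 \otimes \calJ_2$ is spanned by elementary tensors, the identity extends from simple tensors to all of $\calJ_1 \otimes \calJ_2$.

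The only genuine subtlety is bookkeeping around the tensor product of EJAs: one has to confirm that the Jordan product and the associative inner product on $\calJ_1 \otimes \calJ_2$ really do factor on simple tensors as stated above, and that $\try$ is well defined and linear. These are the standard conventions used implicitly in the preceding definition of $\choiz$, so no additional argument is needed beyond invoking them; the computation itself is a one-line application of linearity and the identity property.
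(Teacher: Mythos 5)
Your proposal is correct and is essentially the same argument as the paper's: the paper expands $z$ in a basis of elementary tensors $v_i \otimes w_j$ and carries out exactly the computation you perform on a single simple tensor $a \otimes b$ (using $(a\otimes b)\circ(e_1\otimes y)=a\otimes(b\circ y)$, $\try(a\otimes b)=\tr(b)\,a$, and the factorization of the inner product on simple tensors), with linearity handled inside the sum rather than as a separate final step. No substantive difference.
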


\begin{proof}
    Let $v_1, \ldots, v_{r_1}$ be a basis for $\calJ_1$ and $w_1, \ldots, w_{r_2}$ be a basis for $\calJ_2$. Then $\{v_i \otimes w_j\}_{i \in [r_1], \, j \in [r_2]}$ is a basis for $\calJ_1 \otimes \calJ_2$, and we can write
    \[
        z = \sum_{ij} z_{ij} (v_i \otimes w_j)
    \]
    for some $z_{ij} \in \R$. We then have 
    \begin{align*}
        \inp{x}{\choiz(y)}
        &= \inp{x}{\try\big(
            z \circ (e_1 \otimes y)
            \big)} \\
        &= \inp{x}{\try\bigg(
            \sum_{ij} z_{ij} (v_i \otimes w_j) \circ (e_1 \otimes y)
            \bigg)} \\
        &= \inp{x}{\try\bigg(
            \sum_{ij} z_{ij} \big( v_i \otimes (w_j \circ y) \big)
            \bigg)} \\
        &= \inp{x}{\sum_{ij} z_{ij} \try\big(
            v_i \otimes (w_j \circ y)
            \big)} \\
        &= \inp{x}{\sum_{ij} z_{ij}
            \tr(w_j \circ y) v_i
        } \\
        &= \sum_{ij} z_{ij} \inp{v_i}{x} \inp{w_j}{y} \\
        &= \sum_{ij}z_{ij} \inp{v_i \otimes w_j}{x \otimes y} \\
        &= \inp{\sum_{ij} z_{ij} v_i \otimes w_j}{x \otimes y} \\
        &= \inp{z}{x \otimes y}.
    \end{align*}
\end{proof}

\section{Technical Lemmas}

\begin{lemma}
\label{lem:triangle-ineq}
Let $\calJ := \prod_{i=1}^N \calJ_i$ be the product Euclidean Jordan Algebra of $n$ EJAs~$\calJ_i$ and let~$\|\cdot\|_{\calJ_i}$ be the norm induced by the EJA inner product~$\langle \cdot, \cdot \rangle_{\calJ_i}$ on each one of the EJA~$\calJ_i$, i.e. for $x = (x_1, \cdots, x_N) \in \calJ, \|x\|_{\calJ}^2 = \sum_{i \in \mathcal{N}} \|x_i\|_{\calJ_i}^2$ where 
$\|x_i\|_{\calJ_i}^2 = \langle x_i, x_i \rangle_{\calJ_i}\,.$ Then for every~$x = (x_1, \cdots, x_N) \in \calJ, \|x\|_{\calJ} \leq \sum_{i=1}^N \|x_i\|_{\calJ_i}\,.$
\end{lemma}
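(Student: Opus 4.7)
The plan is to reduce the inequality to the elementary comparison between the $\ell^2$ and $\ell^1$ norms on $\mathbb{R}^N_{\geq 0}$. Set $a_i := \|x_i\|_{\calJ_i} \geq 0$ for each $i \in \mathcal{N}$. By the definition of the product inner product norm, $\|x\|_{\calJ}^2 = \sum_{i=1}^N a_i^2$, so the claim becomes $\sqrt{\sum_i a_i^2} \leq \sum_i a_i$.

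To prove this, I would square both sides (both are nonnegative, so squaring preserves the inequality). The right-hand side expands to $\sum_i a_i^2 + 2\sum_{i<j} a_i a_j$, and since $a_i a_j \geq 0$ for every pair $(i,j)$, the cross terms are nonnegative. This gives $\sum_i a_i^2 \leq \bigl(\sum_i a_i\bigr)^2$, from which the desired inequality follows by taking square roots.

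There is no real obstacle here; the statement is a purely norm-level comparison that makes no use of the Jordan algebra structure beyond the fact that each $\|\cdot\|_{\calJ_i}$ is a genuine norm (so that the $a_i$ are nonnegative reals). One could equivalently cite Cauchy–Schwarz applied to the all-ones vector $(1,\ldots,1) \in \mathbb{R}^N$ and $(a_1, \ldots, a_N)$, which gives $\sum_i a_i \leq \sqrt{N}\sqrt{\sum_i a_i^2}$ in one direction, but the direction we actually need is the trivial inclusion $\ell^2 \hookrightarrow \ell^1$ on finite-dimensional nonnegative vectors, as sketched above.
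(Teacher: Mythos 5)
Your proposal is correct and matches the paper's own (first) argument exactly: both reduce the claim to $\sum_i a_i^2 \leq \bigl(\sum_i a_i\bigr)^2$ for $a_i = \|x_i\|_{\calJ_i} \geq 0$, which holds because the cross terms in the expanded square are nonnegative. (The paper also notes an alternative proof via the triangle inequality applied to the decomposition $x = \sum_i (0,\ldots,x_i,\ldots,0)$, but your route is the one the paper presents first.)
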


\begin{proof}
An immediate proof stems from observing that $\|x\|_{\calJ}^2 = \sum_{i \in \mathcal{N}} \|x_i\|_{\calJ_i}^2 \leq \left(  \sum_{i \in \mathcal{N}} \|x_i\|_{\calJ_i} \right)^2\,$ where the last inequality follows from noticing that all the terms in the left hand side are contained in the expansion of the square in the right hand side of the inequality. 

Here is an alternative proof using the fact that~$\|\cdot\|_{\calJ}$ is a norm.  
Using the triangular inequality and the definition of the inner product on the product of EJAs: 
\begin{align}
\|x\|_{\calJ} = \|(x_1, \cdots, x_N)\|_{\cal{J}} 
&= \|(x_1, 0, \cdots, 0) + (0, x_2, 0, \cdots, 0) + \cdots + (0, \cdots, 0, x_N)\|_{\cal{J}} \nonumber\\
&\leq \|(x_1, 0, \cdots, 0)\|_{\calJ} + \|(0, x_2, 0, \cdots, 0)\|_{\calJ} + \cdots + \|(0, \cdots, 0, x_N)\|_{\calJ} \nonumber\\ 
&= \sum_{i = 1}^N \|x_i\|_{\calJ_i}\,. 
\end{align}

\end{proof}

\begin{lemma}
\label{lem:abslin}
    Let $\calJ$ be an EJA and $\calK$ its cone of squares. If $h: \calJ \to \R$ is a linear function, then we have 
    \[
        \max_{\normtr{x}=1} \abs{h(x)}
        =
        \max_{x \in \DK} \abs{h(x)}.\footnote{Maxima are attained since $\abs{h(x)}$ is continuous and both $\normone$ and $\DK$ are compact.}
    \]
\end{lemma}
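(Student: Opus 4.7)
The key observation is that the generalized simplex $\DK$ is contained in the trace-$1$ unit sphere $S := \{x \in \calJ : \normtr{x} = 1\}$. Indeed, if $x \in \DK$ then all spectral eigenvalues $\lambda_i(x)$ are nonnegative (since $x \in \calK$), so $\normtr{x} = \sum_i |\lambda_i(x)| = \sum_i \lambda_i(x) = \tr(x) = 1$. This immediately gives the ``$\leq$'' direction: $\max_{x \in \DK} |h(x)| \leq \max_{\normtr{x}=1} |h(x)|$.

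For the reverse inequality, I would take an arbitrary $x \in S$ and decompose it via its spectral decomposition $x = \sum_{i=1}^r \lambda_i q_i$ into positive and negative parts. Specifically, set
\[
x^+ := \sum_{i : \lambda_i \geq 0} \lambda_i q_i, \qquad x^- := \sum_{i : \lambda_i < 0} (-\lambda_i) q_i,
\]
so that $x = x^+ - x^-$ with $x^\pm \in \calK$ (each is a nonnegative combination of primitive idempotents, which lie in $\calK$ as squares of themselves). Setting $\alpha := \tr(x^+)$ and $\beta := \tr(x^-)$, the identity $\normtr{x} = 1$ becomes $\alpha + \beta = 1$.

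Now I would combine the linearity of $h$ with the triangle inequality:
\[
|h(x)| = |h(x^+) - h(x^-)| \leq \alpha\, \bigl|h(x^+/\alpha)\bigr| + \beta\, \bigl|h(x^-/\beta)\bigr|,
\]
with the convention that a term vanishes when its prefactor is zero. Whenever $\alpha > 0$ the point $x^+/\alpha$ lies in $\DK$, and likewise for $x^-/\beta$ when $\beta > 0$. Since $\alpha + \beta = 1$, the right-hand side is a convex combination of two values of $|h|$ at points of $\DK$ (with degenerate cases handled by the convention), hence is bounded above by $\max_{z \in \DK} |h(z)|$. Taking the supremum over $x \in S$ yields the ``$\geq$'' direction.

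The argument is essentially a routine ``Jordan decomposition'' reduction; the only mild subtlety is checking that $x^+, x^- \in \calK$, which follows directly from the spectral theorem for EJAs together with the fact that primitive idempotents are squares (so lie in $\calK$) and nonnegative scalar multiples preserve membership in a convex cone. The existence of the maxima is already guaranteed by the footnote in the statement (continuity of $|h|$ and compactness of $S$ and $\DK$), so no additional attainment argument is needed.
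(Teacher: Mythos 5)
Your proof is correct, but it takes a slightly different route from the paper's. The paper first reduces (via a without-loss-of-generality step on the sign of $h$) to a maximizer $x^*$ of $h$ over the trace-norm sphere, writes its spectral decomposition $x^* = \sum_i \abs{\lambda_i}\,\mathrm{sign}(\lambda_i) q_i$ as a convex combination of the signed primitive idempotents $\mathrm{sign}(\lambda_i)q_i$ (each of trace norm $1$), argues that a linear function maximized at a convex combination must attain its maximum at every component with nonzero weight, and then observes that $\abs{h(q_i)} = v^*$ with $q_i \in \Delta_{\mathcal{K}}$. You instead take an \emph{arbitrary} point of the sphere, split it via the Jordan decomposition $x = x^+ - x^-$ into its positive and negative spectral parts, and bound $\abs{h(x)}$ by the convex combination $\alpha\,\abs{h(x^+/\alpha)} + \beta\,\abs{h(x^-/\beta)}$ with $x^{\pm}/\tr(x^{\pm}) \in \Delta_{\mathcal{K}}$. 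Your version is marginally more streamlined: it needs no WLOG on the sign of $h$, no appeal to a specific maximizer beyond the attainment already granted in the footnote, and only two points of $\Delta_{\mathcal{K}}$ rather than $r$ signed idempotents. The paper's version gives slightly more structural information in return, namely that the maximum is attained at a primitive idempotent (an extreme point of $\Delta_{\mathcal{K}}$). Both arguments rest on the same ingredients — the spectral decomposition, the fact that primitive idempotents lie in $\calK$, and linearity of $h$ — so either is a valid proof of the lemma.
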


\begin{proof}
    Let $v^* := \max_{\normtr{x}=1} \abs{h(x)}$. Since $\DK \subseteq \normone$, we need only prove that $\max_{x \in \DK} \abs{h(x)} \geq v^*$.

    Note that $\abs{h(x)} = \max\{h(x), -h(x)\}$, so
    \begin{equation}
        v^* 
        = \max_{\normtr{x}=1} \max\{h(x), -h(x)\}\\ 
        = \max\left\{
                    \max_{\normtr{x}=1} h(x), \,
                    \max_{\normtr{x}=1} -h(x)
                \right\}.
    \end{equation}
    Without loss of generality we can assume that $v^* = \max_{\normtr{x}=1} h(x)$, since if it is instead the case that  $v^* = \max_{\normtr{x}=1} -h(x)$ then we could simply have taken $-h$ as our linear function instead.\footnote{i.e., define $g(x) := -h(x)$. We then want to show that $\max_{\normtr{x}=1} \abs{g(x)}
        =
        \max_{x \in \DK} \abs{g(x)}$, and we are in the case that $v^* := \max_{\normtr{x}=1} \abs{g(x)} = \max_{\normtr{x}=1} g(x)$.
        }
    The maximum of $h(x)$ over $\normone$ is attained at some $x^*$; let $\sum_i \lambda_i q_i$ be its spectral decomposition. Since $\lambda_i = \abs{\lambda_i}\sign(\lambda_i)$, we have 
    \[
        x^*
        =
        \sum_i \lambda_i q_i
        =
        \sum_i \abs{\lambda_i} \sign(\lambda_i) q_i,
    \]
    i.e., $x^*$ is a convex combination of $\{\sign(\lambda_i)q_i\}_i$ (since $\sum_i \abs{\lambda_i} =1$). Then, since each $\sign(\lambda_i)q_i$ also lies in $\normone$ and $x^*$ is the maximizer of the linear function $h(x)$ over $\normone$, we have 
    \[
        v^* = h(x^*) = h(\sign(\lambda_i)q_i)\,,
        \qqfa 
        i: \abs{\lambda_i} \neq 0.
    \]
    Now pick any $i: \abs{\lambda_i} \neq 0$. $q_i \in \DK$ and satisfies $\abs{h(q_i)}
        = \abs{h(\sign(\lambda_i)q_i)}
        = v^*\,.$
    Therefore we have
        $\max_{x \in \DK} \abs{h(x)}
        \geq \abs{h(q_i)}
        = v^*.$
\end{proof}

\end{document}